\author{Torgeir Aamb\o}
\title{Classification of localizing subcategories along $t$-structures}
\date{}
\begin{document}
\color{myblack}
\maketitle

\begin{abstract}
    We study the interplay between localizing subcategories in a stable $\infty$-category $\C$ with $t$-structure $(\C\geqz, \C\leqz)$, the prestable $\infty$-category $\C\geqz$ and the abelian category $\C^\heart$. We prove that weak localizing subcategories of $\C^\heart$ are in bijection with the localizing subcategories of $\C$ where object-containment can be checked on the heart. This generalizes similar known correspondences for noetherian rings and bounded $t$-structures. We also prove that this restricts to a bijection between localizing subcategories of $\C^\heart$, and localizing subcategories of $\C$ that are kernels of $t$-exact functors --- lifting Lurie's correspondence between localizing subcategories in $\C\geqz$ and $\C^\heart$ to the stable category $\C$. 
\end{abstract}

{\hypersetup{linkcolor=myblack}\tableofcontents}

\section{Introduction}

The concept of a $t$-structure on a triangulated category was introduced in \cite{beilinson-bernstein-deligne_1983}, and in a way axiomatizes the concept of taking the homology of a chain complex in the derived category of a ring. Most interesting triangulated categories arise as the homotopy category of a stable $\infty$-category, and the concept of a $t$-structure lifts to this setting. Having a $t$-structure allows us to naturally compare features of a stable $\infty$-category $\C$ to features of an abelian category $\C^\heart$, called the heart of the given $t$-structure. 

In order to understand the internal structure of a stable $\infty$-category, is its important to understand its \emph{localizing subcategories}. A full subcategory is called localizing if it is a stable full subcategory closed under colimits. The goal of this paper is to classify the localizing subcategories of $\C$ that interact well with $t$-structures. These are the localizing subcategories $\L\subseteq \C$ that inherit a $t$-structure, and you can check if an object $X$ is in $\L$ by checking whether $\pi_n^\heart X\in \L^\heart$. We call these the \emph{$\pi$-stable localizing subcategories} --- see \cref{def:pi-stable-localizing-subcategory}. 

We want to compare these localizing subcategories of $\C$ to subcategories of $\C^\heart$. The abelian analog of localizing subcategories of a stable $\infty$-category, are the \emph{weak Serre subcategories} closed under coproducts. We call these the \emph{weak localizing subcategories}. Our first main result is the following classification of $\pi$-stable localizing subcategories in $\C$ via the heart construction. This generalizes a similar correspondence due to Takahashi (\cite{takahashi_2009}) for commutative noetherian rings, see \cref{cor:takahashi-weak-localizing}. 

\begin{introthm}[\cref{thm:premain}]
    \label{thm:A}
    Let $\C$ be a stable $\infty$-category with a $t$-structure. If the $t$-structure is right complete and compatible with filtered colimits, then the map $\L\longmapsto \L^\heart$ gives a one-to-one correspondence between $\pi$-stable localizing subcategories of $\C$ and weak localizing subcategories in $\C^\heart$.
\end{introthm}

The above theorem also holds when we exclude the existence of coproducts, giving a one-to-one correspondence between $\pi$-stable thick subcategories of $\C$ and weak Serre subcategories of $\C^\heart$. This generalizes the similar result of Zhang--Cai (\cite{zhang-cai_2017}) to the setting of unbounded $t$-structures, see \cref{prop:classification-weak-serre} and \cref{cor:classification-weak-serre-bounded}. 

We also want a way to study the analog of (non-weak) Serre subcategories of $\C^\heart$ closed under coproducts --- called the \emph{localizing subcategories} of $\C^\heart$ --- in the stable $\infty$-category $\C$. In order to do this we use the bridge between stable $\infty$-categories with a $t$-structure and prestable $\infty$-categories, as developed mainly by Lurie in \cite[App. C]{lurie_SAG}. A prestable $\infty$-category acts as the connected part of the $t$-structure, denoted $\C\geqz$, and they allow us to study $t$-structures on $\C$ indirectly, without carrying around extra data. 

Lurie introduced the notion of localizing subcategories of the prestable $\infty$-category $\C\geqz$, which more closely mimics the construction of localizing subcategories of abelian categories. The analog of $\pi$-stable localizing subcategory in this situation are called \emph{separating localizing subcategories} by Lurie. Using the heart construction for prestable $\infty$-categories, Lurie classified the separating localizing subcategories of $\C\geqz$ in \cite[C.5.2.7]{lurie_SAG}, by proving that there is a one-to-one correspondence
\[\separating \simeq \ablocalizing.\]
Our second main theorem provides an extension of this correspondence to the stable $\infty$-category $\C$, allowing us to strengthen \cref{thm:A} to non-weak localizing sucategories. This interacts well with existing classifications of localizing subcategories in modules over noetherian rings and quasicoherent sheaves on noetherian schemes. 


\begin{introthm}[{\cref{thm:main}}]
    \label{thm:B}
    Let $\C$ be a stable category with a $t$-structure. If the $t$-structure is right complete and compatible with filtered colimits, then the map $\L\longmapsto \L^\heart$ gives a one-to-one correspondence between localizing subcategories of $\C^\heart$, and $\pi$-stable localizing subcategories of $\C$ that are kernels of a $t$-exact localization.
\end{introthm}

Note that any stable $\infty$-category is prestable, hence the above result might at first glance seem to follow trivially from Luries's classification. But, any separating localizing subcategory of a stable $\infty$-category $\C$, viewed as a prestable one, is the whole category $\C$ by \cite[C.1.2.14, C.5.2.4]{lurie_SAG}. This means that the stable situation needs its own separate treatment, hence the existence of the current paper. 

The results of the paper can be summarized in the following diagram, showcasing the bijections ($\simeq$) and the inclusions ($\subseteq$) between the different types of subcategories. 

\begin{center}
    \begin{tikzcd}
        \tcompatible 
        \arrow[rrd, "(-)^\heart"]
        &&\\
        \pistable 
        \arrow[rr, "\simeq"] 
        \arrow[u, hook, "\subseteq", swap] 
        && 
        \weaklocalizing  
        \\
        \piexact 
        \arrow[r, "\simeq"] 
        \arrow[d, hook, "\subseteq"]
        \arrow[u, hook, "\subseteq", swap] 
        & 
        \separating 
        \arrow[r, "\simeq"]
        \arrow[d, hook, "\subseteq"]
        & 
        \ablocalizing
        \arrow[u, hook, "\subseteq", swap]
        \\
        \texact 
        \arrow[r, "(-)\geqz"]
        & 
        \prlocalizing 
        \arrow[ru, "(-)\leqz", swap] 
        &                                
        \end{tikzcd}
\end{center}

\textbf{Linear overview:} We start \cref{sec:prestable-and-stable-categories} with some recollections on $t$-structures, prestable $\infty$-categories, and their interactions, before we introduce the notion of localizing subcategories in \cref{ssec:localizing-subcategories}. We then study some further interactions between these, which we use to prove \cref{thm:A} in \cref{ssec:classificartion-weak-localizing} and \cref{thm:B} in \cref{ssec:classificartion-localizing}. We finish the paper by looking at some consequences and applications of our results. 

\textbf{Conventions:} We will work in the setting of $\infty$-categories, as developed by Lurie in \cite{lurie_09} and \cite{Lurie_HA}. We will restrict our attention to presentable stable $\infty$-categories, which we will just call \emph{stable categories}. Given a stable category $\C$ with a nice $t$-structure, its associated prestable category will be denoted $\C\geqz$ and its heart by $\C^\heart$. We assume all $t$-structures to be accessible.

\textbf{Acknowledgements:} We wish to thank Drew Heard and Marius Nielsen for helpful conversations. This work was partially finished during the authors visit to the GeoTop center at the University of Copenhagen, which we gratefully thank for their hospitality. This work was supported by grant number TMS2020TMT02 from the Trond Mohn Foundation.    

\section{Prestable and stable categories}
\label{sec:prestable-and-stable-categories}

For the rest of the paper we fix a stable category $\C$. We wish to equip this with a $t$-structure, which will allow us to always have a comparison from $\C$ to an abelian category. The main reference for $t$-structures in this setting is \cite[Sec 1.2.1]{Lurie_HA}. Note that, as opposed to much of the homological algebra literature, we follow Lurie's homological indexing convention.  

\begin{definition}
    A \emph{$t$-structure} on $\C$ is a pair of full subcategories $(\C\geqz, \C\leqz)$ such that:
    \begin{enumerate}
        \item The mapping space $\Map_\C(X,Y[-1])\simeq 0$ for all $X\in \C\geqz$ and $Y\in \C\leqz$;
        \item There are inclusions $\C\geqz[1]\subseteq \C\geqz$ and $\C\leqz[-1]\subseteq \C\leqz$;
        \item For any $Y\in \C$ there is a fiber sequence $X\to Y\to Z$ such that $X\in \C\geqz$ and $Z[1]\in \C\leqz$. 
    \end{enumerate} 
\end{definition}

This is equivalent to choosing a $t$-structure on the homotopy category $h\C$, which is a triangulated category. Hence the contents of this paper should be equally useful to those familiar with $t$-structures on triangulated categories. 

We will assume all $t$-structures to be accessible, in the sense that the connected part $\C\geqz$ is presentable. By \cite[1.2.16]{Lurie_HA} the inclusions $\C\geqz\to \C$ and $\C\leqz\to \C$ have a right adjoint $\tau\geqz$ and a left adjoint $\tau\leqz$ respectively. We denote $\C_{\geq n} := \C\geqz[n]$ and $\C_{\leq n} := \C\leqz[n]$. 

\begin{definition}
    The heart of a $t$-structure $(\C\geqz, \C\leqz)$ on $\C$ is defined as the full subcategory $\C^\heart := \C\geqz\cap\C\leqz$.
\end{definition}

The heart $\C^\heart$ is always equivalent to the nerve of its homotopy category $h\C^\heart$, which was proven in \cite{beilinson-bernstein-deligne_1983} to be an abelian category. It is standard to follow \cite[1.2.1.12]{Lurie_HA} and identify the two. 

\begin{definition}
    The composite functor $\tau\geqz\circ\tau\leqz\simeq\tau\leqz\circ\tau\geqz \: \C\to \C^\heart$ is denoted by $\pi_0^\heart$ and its composition with the shift functor $X\to X[-n]$ by $\pi_n^\heart$. These are called the \emph{heart-valued homotopy groups} of $X$. 
\end{definition}

The last definition we will need, before going on to prestable categories is the following niceness condition. 

\begin{definition}
    A $t$-structure $(\C\geqz, \C\leqz)$ on a stable category $\C$ is said to be \emph{compatible with filtered colimits} if $\C\leqz$ is closed under all filtered colimits in $\C$. 
\end{definition}

We now recall the notion of prestable $\infty$-categories, which, similarily to the stable $\infty$-categories, we will simply call \emph{prestable categories}. The theory of prestable categories was developed by Lurie in \cite[App. C]{lurie_SAG}, and has since been applied in a varied range of areas. We define these as follows. 

\begin{definition}
    An $\infty$-category $\D$ is \emph{prestable} if there exists a stable category $\C$ with a $t$-structure $(\C\geqz, \C\leqz)$, such that $\D\simeq \C\geqz$.
\end{definition}

\begin{remark}
    This is not the most general, nor the standard, definition of a prestable category --- see \cite[C.1.2.1]{lurie_SAG} --- but by \cite[C.1.2.9]{lurie_SAG} the above definition describes all prestable categories admitting finite limits, hence it is not a very severe restriction. The category $\D$ is also not unique, see \cite[C.1.2.10]{lurie_SAG}, but we will mostly focus on the choice 
    \[\D=\Sp(\C\geqz) = \colim (\cdots\overset{\Omega}\to\C\geqz\overset{\Omega}\to\C\geqz).\]
\end{remark}

Since we will discuss both stable and prestable categories, and their interactions, we will try to consequently denote prestable categories by $\C\geqz$ and stable categories by $\C$. 

\begin{remark}
    \label{rm:stable-is-prestable}
    Any stable category $\C$ is prestable, as seen by choosing the trivial $t$-structure $(\C,0)$. This is both a blessing, as it allows us to talk about both in a common language, and a curse, as using common language can be rather confusing when trying to study their interactions.
\end{remark}

We will restrict our attention to Grothendieck prestable categories, which are prestable categories that work well with colimits. There are numerous different equivalent definition of these, see \cite[C.1.4.1]{lurie_SAG}, but the one best related to the above definition of a prestable category is the following. 

\begin{definition}
    A prestable category $\C\geqz$ is \emph{Grothendieck} if the $t$-structure on its associated stable category $\C$ is compatible with filtered colimits. 
\end{definition}

The following example is perhaps the main reason for the naming convention.

\begin{example}
    For any Grothendieck abelian category $\A$, the derived category category $\D(\A)$ has a natural $t$-structure with heart $\A$. The connected component $\D(\A)\geqz$, which consists of complexes $X_\bullet$ such that $H_i(X_\bullet) = 0$ for $i<0$ is a Grothendieck prestable category.  
\end{example}

We also have some examples showing up in stable homotopy theory. 

\begin{example}
    Let $\Sp$ be the stable $\infty$-category of spectra. This has a natural $t$-structure with heart $\Ab$. The connected component $\Sp\geqz$, consisting of connective spectra, is a Grothendieck prestable category. 
\end{example}

\begin{example}
    Important for modern homotopy theory is the category of $E$-based synthetic spectra $\SynE$ for some Landweber exact homology theory $E$, see \cite{pstragowski_2022}. This has a naturally occurring $t$-structure with heart $\Comod\EE$, and its connected component $\SynE^{\geq 0}$ is Grothendieck prestable. This example is one of our main motivations for this work, and we plan to study the applications of the contents in this paper to synthetic spectra in future work. 
\end{example}



\begin{remark}
    \label{rm:stable-is-grothendieck-prestable}
    If the prestable category $\C\geqz$ is compactly generated, then it is automatically Grothendieck, see \cite[C.1.4.4]{lurie_SAG}. A stable $\infty$-category $\C$ is, as mentioned above, also prestable. It is in fact Grothendieck if and only if it is presentable. 
\end{remark}

\begin{definition}
    We say a $t$-structure on a stable category $\C$ is \emph{right complete} if the natural functor $\displaystyle {\underset{n}\colim} \C_{\geq -n} \overset{\simeq}\to \C $ is an equivalence. 
\end{definition}

\begin{remark}
    \label{rm:grothendieck-iff-right-complete-and-colims}
    For any Grothendieck prestable category $\C\geqz$ the functor $\Sp(-)$, sending $\C\geqz$ to its stabilization, $\Sp(\C\geqz)$, provides a one-to-one correspondence between Grothendieck prestable categories and stable categories equipped with a right complete $t$-structure compatible with filtered colimits. This is one of the main reasons to study prestable categories, as being prestable is a property, while having a $t$-structure is extra structure. 
\end{remark}

\begin{remark}
    If $\C$ is a stable category with a $t$-structure compatible with filtered colimits, then the heart-valued homotopy groups functors $\pi_n^\heart$ preserve filtered colimits. 
\end{remark}

\subsection{Bridging the gap}

In this section we study the passage from stable to prestable and vice versa. In particular we look into when they determine each other. 

If $\C$ is a stable category with a right complete $t$-structure $(\C\geqz, \C\leqz)$, we can reconstruct it from its connected component. 

\begin{lemma}[{\cite[C.1.2.10]{lurie_SAG}}]
    \label{lm:right-complete-then-equiv-to-sp}
    Let $\C$ be a stable category. If $\C$ has a right complete $t$-structure, then there is an equivalence $\Sp(\C\geqz)\simeq \C$. 
\end{lemma}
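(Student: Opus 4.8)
The plan is to build an explicit comparison functor $\phi\colon \C\to\Sp(\C\geqz)$, exhibit a candidate inverse $\psi$, and then promote this to an equivalence by writing both categories as a common filtered colimit of shifted copies of $\C\geqz$, using right completeness.

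First I would construct $\phi$. The truncation $\tau\geqz\colon\C\to\C\geqz$ is a right adjoint, hence preserves finite limits, so the universal property of the stabilization supplies a unique exact functor $\phi$ with $\Omega^\infty\circ\phi\simeq\tau\geqz$. Tracing through the projections $\Omega^\infty_n$, the functor $\phi$ sends an object $Y$ to the spectrum object with $n$-th term $\Omega^\infty_n\phi(Y)\simeq\tau\geqz(Y[n])\simeq\tau_{\geq -n}(Y)[n]$; the structure maps of this spectrum object are equivalences exactly because $\tau_{\geq -n}\tau_{\geq -(n+1)}\simeq\tau_{\geq -n}$, so $\phi$ is well defined. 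Restricting $\phi$ to $\C\geqz$ recovers the standard equivalence $\C\geqz\overset{\simeq}\to\Sp(\C\geqz)\geqz$ inverse to $\Omega^\infty$, since for connective $Y$ one has $\tau_{\geq -n}(Y)=Y$.

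Next I would describe the inverse. Using the counit $\tau\geqz\to\mathrm{id}$, i.e. the inclusion $j\colon\C\geqz\hookrightarrow\C$, a spectrum object $E=(E_n)$ gives a tower $j(E_0)\to j(E_1)[-1]\to j(E_2)[-2]\to\cdots$ in $\C$, and I set $\psi(E):=\colim_n j(E_n)[-n]$. The composite $\psi\phi$ then sends $Y$ to $\colim_n\tau_{\geq -n}(Y)$ along the canonical truncation maps, which is $Y$ itself by right completeness; this is the one place the hypothesis enters, and it yields $\psi\phi\simeq\mathrm{id}_\C$.

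Finally I would upgrade this to an equivalence. Since $\phi$ is $t$-exact and restricts to an equivalence on connective parts, it induces equivalences $\C_{\geq -n}\overset{\simeq}\to\Sp(\C\geqz)_{\geq -n}=\C\geqz[-n]$ for all $n$; as $\C\simeq\colim_n\C_{\geq -n}$ by hypothesis and $\Sp(\C\geqz)$ is right complete with connective part $\C\geqz$, passing to the colimit shows $\phi$ is an equivalence. I expect the main obstacle to be this last identification, namely that the canonical $t$-structure on $\Sp(\C\geqz)$ has connective part $\C\geqz$ and is right complete (equivalently, checking $\phi\psi\simeq\mathrm{id}$ directly on spectrum objects). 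This is the genuinely nonformal input, precisely the content supplied by the general theory of stabilizations of prestable categories (\cite[App. C]{lurie_SAG}), whereas the remaining steps are bookkeeping with truncation functors and the universal property of $\Sp$.
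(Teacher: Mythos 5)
The paper offers no proof of this lemma at all --- it is imported verbatim from \cite[C.1.2.10]{lurie_SAG} --- so there is nothing internal to compare against. Your sketch is a correct outline of the standard argument underlying Lurie's proof: the comparison functor $\phi$ with $\Omega^\infty\phi\simeq\tau\geqz$ is the right construction, your computation $\Omega_{\C\geqz}\bigl(\tau\geqz(Y[n+1])\bigr)\simeq(\tau_{\geq-n}\tau_{\geq-(n+1)}Y)[n]\simeq(\tau_{\geq-n}Y)[n]$ is what makes the family a spectrum object, and right completeness enters exactly where you say it does, in identifying $\colim_n\tau_{\geq-n}Y\simeq Y$. You also correctly isolate the nonformal input, namely that $\Sp(\C\geqz)$ carries a right complete $t$-structure with $\Omega^\infty\colon\Sp(\C\geqz)\geqz\overset{\simeq}\to\C\geqz$; that is \cite[C.1.2.9]{lurie_SAG} and is genuinely needed for the last step. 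The only point deserving care is the meaning of ``$\colim_n\C_{\geq-n}\simeq\C$'': this colimit must be taken along the inclusions in $\mathrm{Pr}^L$ (equivalently, as the limit of the truncation functors $\tau_{\geq-n}$), not as a naive union of subcategories --- otherwise right completeness would force every object to be bounded below, which fails already for spectra. With that reading, your ``pass to the colimit'' step is sound.
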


This fact also extends to equivalences of categories, as proven by Antieau. 

\begin{lemma}[{\cite[6.1]{antieau_2021}}]
    \label{lm:if-prestable-equiv-then-stable-equiv}
    Let $\C$ and $\D$ be stable categories equipped with right complete $t$-structures. If $\C\geqz\simeq \D\geqz$, then also $\C\simeq \D$. 
\end{lemma}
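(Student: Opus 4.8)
The plan is to reduce the statement to the functoriality of stabilization, using \cref{lm:right-complete-then-equiv-to-sp} as the only real input. Since both $\C$ and $\D$ carry right complete $t$-structures, that lemma provides equivalences $\C\simeq\Sp(\C\geqz)$ and $\D\simeq\Sp(\D\geqz)$. Thus it is enough to produce, from the given equivalence $\C\geqz\simeq\D\geqz$, an equivalence $\Sp(\C\geqz)\simeq\Sp(\D\geqz)$; composing the three equivalences then yields $\C\simeq\D$.

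To obtain the middle equivalence I would use that $\Sp(-)$ is a functor and hence sends equivalences to equivalences. Concretely, $\Sp(\C\geqz)$ is assembled from $\C\geqz$ using only the zero object and the loop functor $\Omega$, both of which are characterized by finite limits. Any equivalence of $\infty$-categories preserves all limits, so a given equivalence $F\: \C\geqz\overset{\simeq}\to\D\geqz$ preserves the zero object and commutes with $\Omega$ up to canonical natural equivalence. Therefore $F$ induces an equivalence of the two towers defining the stabilizations, and passing to the limit gives $\Sp(F)\: \Sp(\C\geqz)\overset{\simeq}\to\Sp(\D\geqz)$. Stringing everything together,
\[\C\;\simeq\;\Sp(\C\geqz)\;\simeq\;\Sp(\D\geqz)\;\simeq\;\D.\]

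The step carrying the genuine weight is the appeal to \cref{lm:right-complete-then-equiv-to-sp}: the whole content of the statement is that right completeness lets one recover $\C$ from its connective part by stabilizing, and once this is granted the remainder is formal. The only point needing a little care is that an \emph{abstract} equivalence of the underlying $\infty$-categories $\C\geqz$ and $\D\geqz$ suffices --- we do not need it to be $t$-exact or otherwise structured --- precisely because the data $(\Omega, 0)$ used to build $\Sp$ is determined by finite limits and is thus automatically respected by any equivalence.
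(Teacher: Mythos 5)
Your argument is correct and is essentially the intended one: the paper gives no proof of its own, simply citing Antieau, and the reduction you describe --- right completeness identifies each category with the stabilization of its connective part via \cref{lm:right-complete-then-equiv-to-sp}, and $\Sp(-)$ carries equivalences to equivalences because the zero object and $\Omega$ are finite-limit data preserved by any equivalence --- is exactly the standard argument. Your closing observation that the equivalence $\C\geqz\simeq\D\geqz$ need not be assumed $t$-exact or otherwise structured is also correct and is the right subtlety to flag.
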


\begin{remark}
    In particular both the above results hold for any $\C$ such that $\C\geqz$ is Grothendieck. 
\end{remark}

We can also naturally go in the other direction. If we have an equivalence of stable categories $\C\simeq \D$, that is compatible with the $t$-structures, then we get an induced equivalence on the connected components. The precise definition of being compatible with the $t$-structures is as follows. 

\begin{definition}
    Let $\C, \D$ be stable categories with $t$-structures. An exact functor $F\:\C\to\D$ is \emph{right $t$-exact} if $F(\C\geqz)\subseteq \D\geqz$. The notion of \emph{left $t$-exactness} is defined similarly. If $F$ satisfies both, we say that it is a \emph{$t$-exact functor}. 
\end{definition}

\begin{remark}
    This convention might seem wrong to readers with a background in homological algebra, as the role of left and right $t$-exact functors are usually the opposite. This flip is a consequence of using the homological indexing convention rather than cohomological indexing. 
\end{remark}

The above can then be made precise as follows. 

\begin{lemma}
    Let $\C, \D$ be stable categories with $t$-structures. If $F\:\C\to\D$ is a right $t$-exact functor, then we have an induced functor of prestable categories $F\geqz\:\C\geqz\to\D\geqz$. If $F$ is an equivalence, then so is $F\geqz$. 
\end{lemma}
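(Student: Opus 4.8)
The plan is to separate the claim into its two halves: the construction of $F\geqz$, and the equivalence statement. For the first half, everything is essentially formal. Since $F$ is right $t$-exact we have $F(\C\geqz)\subseteq\D\geqz$, so I would define $F\geqz$ to be the restriction of $F$ to the full subcategory $\C\geqz$, which then factors through $\D\geqz$. To see this restriction is a genuine functor of Grothendieck prestable categories, I would check it preserves the relevant colimits: the inclusion $\C\geqz\hookrightarrow\C$ is left adjoint to $\tau\geqz$ and hence preserves all colimits, so colimits in $\C\geqz$ agree with those computed in $\C$; since $F$ is exact it preserves these, and therefore so does $F\geqz$.

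For the second half I would split ``equivalence'' into fully faithful plus essentially surjective. Fully faithfulness of $F\geqz$ comes for free: it is the restriction of the fully faithful functor $F$ to full subcategories, and mapping spaces in $\C\geqz$ and $\D\geqz$ are computed in $\C$ and $\D$. So the only real content is essential surjectivity onto $\D\geqz$, i.e. that every connective object of $\D$ is $F$ of a connective object of $\C$; equivalently, that $F^{-1}(\D\geqz)\subseteq\C\geqz$.

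To establish this I would fix $Y\in\D\geqz$, set $X := F^{-1}Y$, and apply the exact functor $F$ to the truncation fiber sequence $\tau\geqz X\to X\to\tau_{\leq -1}X$. This produces a fiber sequence whose first term lies in $\D\geqz$ by right $t$-exactness and whose last term lies in $\D_{\leq -1}$; by uniqueness of truncation triangles this must be the truncation sequence of $Y$. Since $Y$ is connective its $\leq -1$ truncation vanishes, so $F\tau_{\leq -1}X\simeq 0$, and because $F$ is an equivalence this forces $\tau_{\leq -1}X\simeq 0$, i.e. $X\in\C\geqz$. Together with the previous paragraph this shows $F\geqz$ is fully faithful and essentially surjective, hence an equivalence, with inverse $(F^{-1})\geqz$.

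The crux, and the step I would be most careful about, is identifying the image triangle as the truncation triangle of $Y$: this requires the last term $F\tau_{\leq -1}X$ to sit in $\D_{\leq -1}$, which is exactly \emph{left} $t$-exactness of $F$ — right $t$-exactness alone does not suffice. Indeed, the suspension $\Sigma$ on $\Sp$ is a right $t$-exact equivalence whose restriction lands in $\Sp_{\geq 1}$ and is not essentially surjective onto $\Sp\geqz$. So the ``equivalence'' in the statement must be read as compatible with the $t$-structures in the sense defined above, i.e. $t$-exact; granting that, $F^{-1}$ is $t$-exact as well, and the cleanest way to finish is simply to restrict both $F$ and $F^{-1}$ to the connective parts and observe that they are mutually inverse.
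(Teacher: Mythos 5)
The paper states this lemma without proof, so there is nothing to compare against line by line; judged on its own, your proposal is correct and, more importantly, it catches a genuine defect in the statement as literally written. The first clause only needs right $t$-exactness, and your construction of $F\geqz$ by restriction (with colimit-preservation coming from the fact that $\C\geqz$ is closed under colimits in $\C$, the inclusion being a left adjoint to $\tau\geqz$) is exactly right. For the second clause your diagnosis is accurate: full faithfulness of $F\geqz$ is automatic, but essential surjectivity amounts to $F^{-1}(\D\geqz)\subseteq\C\geqz$, and your truncation-triangle argument genuinely needs left $t$-exactness to place $F\tau_{\leq -1}X$ in $\D_{\leq -1}$ before uniqueness of the truncation triangle can be invoked. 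Your counterexample is valid: $\Sigma\:\Sp\to\Sp$ is a right $t$-exact autoequivalence whose restriction to $\Sp\geqz$ has essential image $\Sp_{\geq 1}$ and so is not an equivalence of $\Sp\geqz$ with itself. Hence the equivalence clause requires strengthening ``right $t$-exact'' to ``$t$-exact'' (or at least requiring the inverse to be right $t$-exact as well); this is plainly what the surrounding prose intends (``an equivalence \dots\ compatible with the $t$-structures''), and under that reading your argument --- or the one-line version, that $F$ and $F^{-1}$ are both $t$-exact and restrict to mutually inverse functors on connective parts --- is a complete proof.
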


For the rest of the paper we will use the following terminology. 

\begin{definition}
    A \emph{$t$-stable category} is a stable category $\C$ together with a choice of a right complete $t$-structure compatible with filtered colimits. 
\end{definition}

\begin{example}
    Let us see some examples of $t$-stable categories. 
    \begin{enumerate}
        \item For every commutative noetherian ring $R$, the derived category $\Der(R)$ together with its natural $t$-structure, is a $t$-stable category. 
        \item The category of spectra, together with its natural $t$-structure, is a $t$-stable category.
        \item The category of synthetic spectra, $\SynE$, together with its natural $t$-structure is a $t$-stable category. 
        \item For a noetherian scheme $X$, its associated derived category of quasi-coherent $\mathcal{O}_X$-modules, $\Der_{qc}(X)$, is $t$-stable. 
    \end{enumerate}
\end{example}

\begin{remark}
    Let $\C$ be a $t$-stable category. By definition we have that the connective part, $\C\geqz$, is a Grothendieck prestable category, and that the heart $\C^\heart$ is a Grothendieck abelian category. Hence $t$-stable categories serve as a natural place to study the interactions between these three types of categories. 
\end{remark}

\begin{remark}
    In \cite[Section C.3.1]{lurie_SAG} Lurie constructs a category of $t$-stable categories. If we denote this by $t\Cat$ then the contents of \cref{rm:grothendieck-iff-right-complete-and-colims} can be described as an adjoint pair of equivalences
    \begin{center}
        \begin{tikzcd}
            \Groth \arrow[r, "\Sp(-)", yshift=2pt] & t\Cat \arrow[l, "(-)\geqz", yshift=-2pt].
        \end{tikzcd}
    \end{center}
    This should, however, be viewed as a heuristic rather than a very precise statement, as the right hand category is a bit tricky to define. 
\end{remark}

\subsection{Localizing subcategories}
\label{ssec:localizing-subcategories}

We now turn our attention to localizing subcategories. As we are working in three interconnected settings --- stable, prestable and abelian --- and all settings use the same terminology, we feel that this section is very ripe for confusions to occur . In an attempt to clarify which setting we are in, we will usually refer to localizing subcategories of stable categories as \emph{stable localizing subcategories}, localizing subcategories of prestable categories as \emph{prestable localizing subcategories} and localizing subcategories of abelian categories as \emph{abelian localizing subcategories}. We will, however, sometimes omit the categorical prefix when we feel that it is clear from context. 

\begin{definition}
    Let $\C$ be a stable category. A full subcategory $\L\subseteq \C$ is said to be \emph{thick} if it is a full stable subcategory closed under finite colimits. In particular, it is closed under extensions and desuspensions. We say $\L$ is a \emph{stable localizing subcategory} if it is thick and closed under filtered colimits. 
\end{definition}

Stable localizing subcategories are uniquely determined by localization functors on $\C$, hence their name. This is a standard fact about localizations, but we include a sketch of the proof for convenience. 

\begin{lemma}
    A full subcategory $\L$ of a stable category $\C$, is a stable localizing subcategory if and only if there is a stable category $\D$, and an exact localization $L\:\C\to\D$, such that $\L$ is the kernel of $L$. 
\end{lemma}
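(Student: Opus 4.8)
The plan is to prove both implications directly.

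For the "if" direction, suppose $L\:\C\to\D$ is an exact localization with kernel $\L$. I would first recall that an exact functor of stable categories preserves finite colimits automatically (it preserves cofiber sequences and finite coproducts, as these are built from pushouts and the zero object). Since $L$ is a localization in the presentable setting, it is a left adjoint, hence preserves all colimits. The kernel $\L = L^{-1}(0)$ is then closed under any colimit that $L$ preserves and sends to $0$: if $X_i \in \L$, then $L(\colim X_i) \simeq \colim L(X_i) \simeq \colim 0 \simeq 0$, so $\colim X_i \in \L$. The same argument with finite (co)limits shows $\L$ is stable — closed under fibers, cofibers, shifts, and finite coproducts — and an extension argument (the fiber sequence $X\to Y\to Z$ with $X,Z\in\L$ forces $L(Y)\simeq 0$ by the long exact sequence / two-out-of-three on the cofiber sequence) shows closure under extensions. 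Thus $\L$ is thick and closed under filtered colimits, i.e. a stable localizing subcategory.

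For the "only if" direction, I would take a stable localizing subcategory $\L\subseteq\C$ and produce the localization as a Verdier quotient. The key construction is the Bousfield localization: since $\L$ is presentable (being a localizing, hence accessible and colimit-closed, subcategory of a presentable category) and closed under colimits, the inclusion $\L\hookrightarrow\C$ admits a right adjoint, and one forms the quotient $\D := \C/\L$ together with the canonical projection $L\:\C\to\D$. I would invoke the general theory of Bousfield localizations in presentable stable categories (via \cite{Lurie_HA}, e.g. the theory around localizations of presentable $\infty$-categories and stable Verdier quotients): the quotient $\D$ is again a presentable stable category, $L$ is an exact localization (a colimit-preserving functor that is a localization in the sense of inverting the maps with cofiber in $\L$), and by construction the kernel of $L$ is exactly $\L$. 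The essential point is that $\L$ being closed under colimits is precisely what guarantees the quotient is well-behaved and that $\ker(L) = \L$ rather than some larger thick closure.

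The main obstacle is the "only if" direction: specifically, verifying that the Verdier/Bousfield quotient exists as a presentable stable category and that its kernel recovers $\L$ on the nose. Since the paper frames this as a standard fact for which only a sketch is promised, I would lean on the presentability of $\L$ — which follows because a localizing subcategory is generated under colimits by a set and is thus an accessible localization target — and cite the relevant results on localizations of presentable stable categories. The identification $\ker(L)=\L$ amounts to checking that an object $X$ lies in $\L$ if and only if its localization vanishes, which uses that $\L$ is closed under the colimits and cofibers appearing in the Bousfield factorization $X\to L_\L X$; the closure hypotheses on $\L$ are exactly what make this identification hold.
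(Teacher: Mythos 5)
Your proposal is correct and takes essentially the same route as the paper: the ``only if'' direction is the Bousfield localization, which the paper realizes concretely as the left adjoint to the inclusion of the right-orthogonal complement $\L^\perp\hookrightarrow\C$ (the presentable incarnation of your Verdier quotient $\C/\L$), and the ``if'' direction is the same observation that an exact, colimit-preserving $L$ has a kernel closed under finite colimits, desuspension, and filtered colimits. The only cosmetic difference is that you phrase the quotient via the right adjoint to $\L\hookrightarrow\C$ and the fiber sequence $\Gamma X\to X\to LX$, while the paper works directly with $\L^\perp$; these produce the same localization.
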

\begin{proof}
    Let $\L$ be a localizing subcategory of $\C$. The right-orthogonal complement 
    \[\L^\perp = \{C\in \C \mid \Hom(X, C)\simeq 0, \forall X\in \C\}\]
    is closed under limits in $\C$, hence the fully faithful inclusion $\L^\perp \hookrightarrow \C$ has a left adjoint $L$. This is an exact localization of stable $\infty$-categories, and the kernel is precisely $\L$. Now, given an exact localization $L\: \C\to \D$ such that $\L = \Ker L$, then $\L$ is a stable category by the exactness of $L$, which is in addition closed under colimits as $L$ preserves these by being a left adjoint. 
\end{proof}


The definition of a localizing subcategory of a prestable category is very similar in nature to its stable brethren, but there is a slight variation. 

\begin{definition}
    Let $\C\geqz$ be a Grothendieck prestable category and $C$ an object in $\C$. Another object $C'\in \C$ is said to be a sub-object of $C$ if there is a map $f\: C'\to C$ with $ \Cofib(f)\in \C^\heart$. 
\end{definition}

\begin{remark}
    For Grothendieck prestable categories, this is equivalent to the assertion that $C'$ is a $(-1)$-truncated object in $\C_{/C}$ via the map $f$, which is the more standard definition of being a sub-object --- see \cite[C.2.3.4]{lurie_SAG}
\end{remark}

\begin{definition}[{\cite[C.2.3.3]{lurie_SAG}}]
    Let $\C\geqz$ be a Grothendieck prestable category. A full subcategory $\L\geqz\subseteq \C\geqz$ is a prestable localizing subcategory if it is accessible and closed under coproducts, cofiber sequences and sub-objects. 
\end{definition}

\begin{remark}
    \label{rm:prestable-localizing-is-Grothendieck}
    Any prestable localizing subcategory $\L\geqz$ of a Grothendieck prestable category $\C\geqz$ is by \cite[C.5.2.1]{lurie_SAG} itself a Grothendieck prestable category. This means, in particular, that $\L\geqz$ is the connected part of a colimit-compatible $t$-structure on a stable category, hence using the notation $\L\geqz$ is not abusive. 
\end{remark}

\begin{remark}
    \label{rm:prestable-localizing-in-stable-then-stable-localizing}
    Recall from \cref{rm:stable-is-prestable} that any stable category $\C$ can be treated as a prestable category. By \cite[C.2.3.6]{lurie_SAG} a full subcategory $\L$ of $\C$ is a prestable localizing subcategory if and only if it is a stable localizing subcategory. 
\end{remark}

As in the stable situation we have a description of prestable localizing subcategories via localization functors. 

\begin{proposition}[{\cite[C.2.3.8]{lurie_SAG}}]
    \label{prop:Lurie-prestable-localizing-left-exact-functor}
    A full subcategory $\L\geqz\subseteq \C\geqz$ of a Grothendieck prestable category is localizing if and only if there is a Grothendieck prestable category $\D\geqz$, and left exact localization $L\:\C\geqz\to\D\geqz$, such that $\L\geqz$ is the kernel of $L$. 
\end{proposition}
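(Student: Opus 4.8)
The plan is to mirror the stable lemma proved above, the only genuinely new ingredient being that closure under sub-objects is exactly what forces the resulting localization to be left exact rather than merely colimit-preserving. I would dispatch the reverse implication first, as it is essentially formal. Suppose $L\:\C\geqz\to\D\geqz$ is a left exact localization with $\L\geqz=\Ker L$. Since $L$ is a left adjoint it preserves all colimits, so for any diagram in $\Ker L$ we have $L(\colim_i X_i)\simeq\colim_i LX_i\simeq 0$; this yields closure under coproducts and cofiber sequences simultaneously, and $\Ker L$ is accessible as the fiber of an accessible functor over the zero object. For closure under sub-objects I would use that a left exact functor preserves monomorphisms (a map is a monomorphism precisely when its diagonal is an equivalence, a finite-limit condition). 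Thus if $C\in\L\geqz$ and $C'\hookrightarrow C$ is a sub-object, then $L$ sends it to a monomorphism $LC'\hookrightarrow LC\simeq 0$; but in a pointed category a sub-object of the zero object is itself zero, so $LC'\simeq 0$ and $C'\in\Ker L=\L\geqz$. Hence $\L\geqz$ is a prestable localizing subcategory.

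For the forward implication, let $\L\geqz\subseteq\C\geqz$ be localizing. By \cref{rm:prestable-localizing-is-Grothendieck} it is itself Grothendieck prestable, hence presentable, and since it is closed under coproducts and cofiber sequences its inclusion preserves all colimits (every colimit in a prestable category is built from these). Being an accessible, colimit-closed subcategory of a presentable category, it admits a Bousfield localization: I would take $\D\geqz$ to be the full subcategory of local objects $\L\geqz^{\perp}$, with reflector $L\:\C\geqz\to\D\geqz$, and the standard theory identifies $\Ker L$ with the localizing subcategory generated by $\L\geqz$, which is $\L\geqz$ itself. That $\D\geqz$ is again Grothendieck prestable follows from the stability of the class under such localizations.

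The decisive and hardest step is showing that this $L$ is left exact; the construction so far uses only closure under colimits and produces an a priori arbitrary colimit-preserving localization. Here I would invoke the hypothesis that $\L\geqz$ is closed under sub-objects --- the prestable analog of a Serre, as opposed to merely weak Serre, subcategory --- which is precisely the extra closure that is needed. Concretely, I would reduce left-exactness to the standard criterion that $L$ is left exact if and only if the class of $L$-equivalences is stable under pullback, and then derive this pullback-stability from the combination of closure under cofiber sequences and sub-objects, following the topological-localization machinery of \cite[C.2.3]{lurie_SAG}. Once left-exactness is established, combining it with the reverse implication gives the claimed correspondence, and I expect essentially all of the genuine difficulty to be concentrated in this pullback-stability reduction.
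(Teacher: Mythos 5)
The paper does not actually prove this proposition: it is imported verbatim from \cite[C.2.3.8]{lurie_SAG}, so there is no internal argument to compare yours against. Judged on its own terms, your reverse implication is correct and essentially the standard one: a colimit-preserving, left exact localization has a kernel that is accessible, closed under colimits and under fibers/cofibers of cofiber sequences, and your observation that left exact functors preserve monomorphisms --- so that a sub-object of an object of $\Ker(L)$ becomes a sub-object of $0$, hence vanishes --- is exactly the right way to obtain closure under sub-objects.

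The forward implication, however, has a genuine gap at the step you yourself single out as decisive. Having formed the reflection $L$ onto $(\L\geqz)^{\perp}$, you need (i) that $\Ker(L)=\L\geqz$ and (ii) that $L$ is left exact, and neither is actually established. For (i), the identification $\Ker(L)={}^{\perp}((\L\geqz)^{\perp})=\L\geqz$ is standard in the \emph{stable} setting because the acyclization fiber sequence $AX\to X\to LX$ has $AX\in\L$; in a prestable category that fiber need not be connective, so the ``standard theory'' you appeal to does not apply verbatim and ${}^{\perp}((\L\geqz)^{\perp})$ could a priori be strictly larger than $\L\geqz$. For (ii), the criterion ``left exact iff the $L$-equivalences are stable under pullback'' is correct, but deriving pullback-stability from closure under cofiber sequences and sub-objects is the entire analytic content of the proposition; deferring it to ``the topological-localization machinery of [C.2.3]'' is circular, since the result you are asked to prove is the main output of that machinery. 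A complete argument has to exhibit an explicit strongly saturated, pullback-stable class of morphisms attached to $\L\geqz$ (rather than the a priori class inverted by the orthogonal reflection) and verify pullback-stability by hand from the closure properties; as written, the proposal records where the difficulty lives but does not resolve it.
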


As prestable localizing subcategories are again prestable, we know that there is some stable category with a $t$-structure presenting it as its connected component. The prestable localizing subcategories hence naturally encodes a sort of induced $t$-structure. This does not happen automatically for stable categories, hence we need to make some additional requirements in order to successfully move between the prestable and stable situation. 

\begin{definition}
    Let $\C$ be a $t$-stable category. A full subcategory $\L\subseteq \C$ is said to be a \emph{$t$-stable localizing subcategory} if it is localizing, and for any $X\in \L$ we have $\tau\geqz X\in \L$ and $\tau\leqz X\in \L$. 
\end{definition}

\begin{remark}
    We hope that using both the names $t$-stable categories and $t$-stable localizing subcategories does not cause confusion. We decided to use this terminology, as a $t$-stable localizing subcategory is itself a $t$-stable category, as we will see in \cref{lm:localizing-inherits-completeness-and-colimits}. 
\end{remark}

\begin{remark}
    Let $\L$ be a $t$-stable localizing subcategory of $\C$. As localizing subcategories are stable under (de)suspension, this means that also all $\tau\geqn X$ and $\tau\leqn$ lie in $\L$ for all $n$. In particular, the homotopy groups $\pi_n^\heart X$ lie in $\L$ for all $n$. 
\end{remark}

\begin{remark}
    \label{rm:t-stable-truncation-homotopy-the-same-functors}
    This definition is motivated by \cite[1.3.19]{beilinson-bernstein-deligne_1983}, where the authors prove that such a full subcategory inherits a $t$-structure given by 
    \[(\L\geqz, \L\leqz) = (\C\geqz\cap \L, \C\leqz\cap \L)\]
    with heart $\C^\heart\cap \L$. In other words, a $t$-stable localizing subcategory has a ``sub $t$-structure'', such that the inclusion is $t$-exact. In particular, the truncation functors $\tau\geqn$ and $\tau\leqn$ are the same as those in $\C$, hence also the homotopy group functors $\pi_n^\heart$ are the same in $\C$ and $\L$. 
\end{remark}



We will from now on assume that a $t$-stable localizing subcategory is equipped with the above $t$-structure. 

\begin{proposition}
    \label{prop:induced-t-structure-on-stable-localizing}
    Let $\C$ be a stable category with a right complete $t$-structure and let $\L\subseteq \C$ be a localizing subcategory. If $\L$ is $t$-stable, then the induced $t$-structure on $\L$ is right complete.  
\end{proposition}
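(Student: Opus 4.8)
The plan is to verify the defining condition of right completeness directly on $\L$, reducing it to the corresponding statement for $\C$. By \cref{rm:t-stable-truncation-homotopy-the-same-functors} the induced $t$-structure on $\L$ satisfies $\L_{\geq -n} = \C_{\geq -n}\cap \L$, and the truncation functors $\tau_{\geq -n}$ computed in $\L$ agree with those of $\C$. Thus the natural functor $\colim_n \L_{\geq -n}\to \L$ that we must show is an equivalence is, objectwise, the canonical map $\colim_n \tau_{\geq -n}X\to X$ for $X\in\L$, where the sequential colimit is formed inside $\L$.

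First I would record that for each $X\in\L$ the whole tower $(\tau_{\geq -n}X)_n$ lies in $\L$: since $\L$ is $t$-stable we have $\tau\geqz X\in\L$, and applying this together with the (de)suspensions that preserve $\L$ gives $\tau_{\geq -n}X\in\L$ for every $n$, exactly as noted in the remark following the definition of a $t$-stable localizing subcategory. Hence $(\tau_{\geq -n}X)_n$ is a diagram in $\L$.

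Next I would compare the colimit computed in $\L$ with the one computed in $\C$. Because $\L$ is localizing, it is closed under filtered colimits in $\C$, so the sequential colimit $\colim_n \tau_{\geq -n}X$ formed in $\C$ already lands in $\L$ and serves as the colimit in $\L$; equivalently, the fully faithful inclusion $\L\hookrightarrow\C$ preserves this colimit. Now right completeness of the $t$-structure on $\C$ says precisely that the canonical map $\colim_n \tau_{\geq -n}X\to X$ is an equivalence in $\C$. Since both source and target lie in $\L$ and the inclusion is fully faithful and preserves this particular colimit, the same map is an equivalence in $\L$. As $X\in\L$ was arbitrary, the functor $\colim_n \L_{\geq -n}\to\L$ is an equivalence, i.e. the induced $t$-structure is right complete.

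The only point requiring care—rather than a genuine obstacle—is the interchange of ``colimit in $\L$'' and ``colimit in $\C$'' in the step above; everything hinges on $\L$ being closed under filtered colimits inside $\C$, so that the inclusion both contains and preserves the relevant sequential colimit. Once that is in place, the identification of the truncation functors from \cref{rm:t-stable-truncation-homotopy-the-same-functors} does the remaining work, and no hypotheses beyond right completeness of $\C$ and $t$-stability of $\L$ are needed.
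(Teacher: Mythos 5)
Your proof is correct and is essentially the paper's own argument written out in full: the paper's proof is the one-line observation that the truncation functors on $\L$ and the relevant (sequential) colimits in $\L$ agree with those in $\C$, which is exactly the reduction you carry out. No issues.
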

\begin{proof}
    This follows immediately from the fact that the truncation functors are the same as in $\C$, and that colimits in $\L$ are the same as those in $\C$. 
\end{proof}

The last thing to introduce in this section are the abelian analogs of the above definitions. 

\begin{definition}
    A full subcategory $\T$ of a Grothendieck abelian category $\A$ is called a \emph{weak Serre subcategory}, if for any exact sequence 
    \[A_1 \to A_2 \to A_3 \to A_4 \to A_5\]
    in $\A$ such that $A_1, A_2, A_4, A_5$ are all in $\T$, then also $A_3 \in \T$. It is a \emph{abelian weak localizing subcategory} it it is a weak Serre subcategory closed under arbitrary coproducts. 
\end{definition}

\begin{remark}
    A full subcategory is a weak Serre subcategory if it is closed under kernels, cokernels and extensions. In particular it is an abelian subcategory, and the fully faithful inclusion $\T\hookrightarrow \A$ is exact. 
\end{remark}

\begin{definition}
    A full subcategory $\T$ of a Grothendieck abelian category $\A$ is called a \emph{Serre subcategory} if for any short exact sequence 
    \[0\to A\to B\to C\to 0\]
    in $\A$, we have $B\in \T$ if and only if $A, C \in \T$. It is an \emph{abelian localizing subcategory} if it is a Serre subcategory closed under arbitrary coproducts. 
\end{definition}

\begin{remark}
    A full subcategory is a Serre subcategory if it is closed under sub-objects, quotients and extensions. This means that all Serre subcategories are weak Serre subcategories, and that all abelian localizing subcategories are abelian weak localizing subcategories. In particular they are all abelian subcategories with exact inclusions into $\A$. 
\end{remark}

\begin{remark}
    Weak Serre subcategories seem to also be called \emph{thick} or \emph{wide} subcategories in the homological algebra literature. But, to make the connection with abelian localizing subcategories clearer we chose to use this terminology. 
\end{remark}

As one perhaps should expect at this point, Abelian localizing subcategories are also determined by localization functors --- as above, so below. 

\begin{proposition}[{\cite[C.5.1.1, C.5.1.6]{lurie_SAG}}]
    \label{prop:abelian-localizing-iff-kernel-of-localization}
    A full subcategory $\T$ of a Grothendieck abelian category $\A$ is an abelian localizing subcategory if and only if there is an exact localization $L\:\A\to \B$, where $\B$ is a Grothendieck abelian category, such that $\T$ is the kernel of $L$. 
\end{proposition}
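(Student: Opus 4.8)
The plan is to treat this as the abelian analog of the stable localization lemma and to route the argument through Gabriel's theory of Serre quotients; the reverse implication is purely formal, so I would dispatch it first. Suppose $L\:\A\to\B$ is an exact localization with fully faithful right adjoint $R$, and set $\T=\Ker L$. Exactness of $L$ shows that for a short exact sequence $0\to A\to B\to C\to 0$ the sequence $0\to LA\to LB\to LC\to 0$ is again exact, so $LB\simeq 0$ holds if and only if both $LA\simeq 0$ and $LC\simeq 0$; this is exactly the Serre condition, so $\T$ is closed under subobjects, quotients and extensions. Moreover $L$ is a left adjoint and hence preserves coproducts, so $L\bigl(\coprod_i A_i\bigr)\simeq\coprod_i LA_i\simeq 0$ whenever each $A_i\in\T$. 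Thus $\T$ is a Serre subcategory closed under coproducts, i.e.\ an abelian localizing subcategory.

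For the forward implication I would construct the localization explicitly. Given an abelian localizing subcategory $\T\subseteq\A$, set $\B\:=\A/\T$ and let $L\:\A\to\B$ be the canonical quotient functor, realized as the localization of $\A$ at the class of morphisms whose kernel and cokernel lie in $\T$. Standard Serre quotient theory gives that $\B$ is abelian, that $L$ is exact and essentially surjective, and that $\Ker L=\T$, using that $\T$ is Serre. The essential extra input is that $L$ is a genuine localization, i.e.\ that it admits a fully faithful right adjoint (the section functor). This is where the coproduct-closure of $\T$ is consumed: by Gabriel's theorem, a Serre subcategory of a Grothendieck abelian category is localizing in the sense that the quotient functor has a right adjoint precisely when it is closed under coproducts. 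Concretely, coproduct-closure guarantees that each object admits a largest subobject lying in $\T$, producing a functorial torsion radical from which the section functor right adjoint to $L$ is built.

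It then remains to check that $\B=\A/\T$ is again Grothendieck, which I expect to be routine once the adjunction $L\dashv R$ is in hand. It is abelian by Serre quotient theory; the image $LG$ of a generator $G$ of $\A$ is a generator of $\B$, since $L$ preserves coproducts and epimorphisms and is essentially surjective; and filtered colimits are exact, which one deduces from AB5 in $\A$ using that $L$ is exact, preserves all colimits, and is essentially surjective. Local presentability follows from $L\dashv R$ exhibiting $\B$ as an accessible reflective localization of the presentable category $\A$. The main obstacle throughout is the forward direction's construction of the right adjoint to $\A\to\A/\T$: this is the substantive, torsion-theoretic heart of the matter and cannot be obtained by a purely formal adjoint-functor argument, since the quotient is manifestly a colimit-type and not a limit-type construction. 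Once the section functor is available, its full faithfulness and the Grothendieck property of $\B$ follow by comparatively standard verifications, and this is exactly the content assembled in \cite[C.5.1.1, C.5.1.6]{lurie_SAG}.
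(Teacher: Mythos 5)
The paper does not prove this proposition at all: it is imported wholesale as a citation to \cite[C.5.1.1, C.5.1.6]{lurie_SAG}, so there is no internal argument to compare against. Your proposal supplies an actual proof, and it is correct in outline; it is essentially the classical Gabriel route of which Lurie's Section C.5.1 is a modern account, so the two are the same argument at different levels of explicitness. Your reverse implication is complete as written: exactness of $L$ gives the two-out-of-three Serre condition for $\Ker L$, and preservation of coproducts is formal from $L$ being a left adjoint. In the forward direction you correctly isolate the one substantive input --- that for a Serre subcategory of a Grothendieck category closed under coproducts, the quotient $\A\to\A/\T$ admits a fully faithful right adjoint --- and correctly attribute it to Gabriel. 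The only place your sketch is thinner than it should be is the parenthetical claim that the section functor is ``built from'' the torsion radical: the largest $\T$-subobject (which does exist, as the image of the coproduct of all $\T$-subobjects, using well-poweredness and closure under coproducts and quotients) only gets you to the torsion-free quotient $A/t(A)$; one still needs the second step of passing to the $\T$-closed (local) objects, i.e.\ those $B$ with $\Hom(T,B)=\Ext^1(T,B)=0$ for all $T\in\T$, and identifying $\A/\T$ with this reflective subcategory. Since you explicitly flag this as the torsion-theoretic heart of the matter and defer it to Gabriel's theorem rather than claiming to have proved it, I would count this as an acceptable black box rather than a gap; the remaining verifications (generator $LG$, AB5 via $L\dashv R$ and exactness of $L$, presentability) are routine as you say.
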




\subsection{Stable and prestable comparisons}

The first thing we need is to be able to recognize stable localizing subcategories by their connected part, as we did for stable categories in \cref{lm:right-complete-then-equiv-to-sp}. 

\begin{corollary}
    \label{cor:t-stable-implies-equiv-to-sp}
    Let $\C$ be a stable category with a right complete $t$-structure and $\L$ a $t$-stable localizing subcategory. In this situation there is an equivalence $\L\simeq \Sp(\L\geqz)$. 
\end{corollary}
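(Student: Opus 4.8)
The plan is to apply Lurie's reconstruction result, Lemma \ref{lm:right-complete-then-equiv-to-sp}, to the stable category $\L$ itself, rather than to $\C$. The corollary asserts $\L \simeq \Sp(\L\geqz)$, and Lemma \ref{lm:right-complete-then-equiv-to-sp} says exactly that any stable category with a right complete $t$-structure is equivalent to the stabilization of its connective part. So the entire content of the proof is verifying that $\L$ genuinely satisfies the hypotheses of that lemma: namely, that $\L$ is a stable category equipped with a right complete $t$-structure, and that $\L\geqz$ (as used in the statement) really is the connective part of that $t$-structure.

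\textbf{The key steps, in order.} First I would observe that $\L$, being a stable localizing subcategory of $\C$, is itself a stable $\infty$-category (it is closed under the relevant finite limits and colimits and is stable). Second, since $\L$ is assumed $t$-stable, Remark \ref{rm:t-stable-truncation-homotopy-the-same-functors} (following \cite{beilinson-bernstein-deligne_1983}) tells us $\L$ inherits a $t$-structure $(\C\geqz \cap \L,\, \C\leqz \cap \L)$ whose truncation functors agree with those of $\C$; in particular $\L\geqz = \C\geqz \cap \L$ is the connective part, which is the object appearing on the right-hand side. Third, I would invoke Proposition \ref{prop:induced-t-structure-on-stable-localizing}, which establishes that this induced $t$-structure on $\L$ is right complete. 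At this point all hypotheses of Lemma \ref{lm:right-complete-then-equiv-to-sp} are met by $\L$, so the lemma yields the equivalence $\L \simeq \Sp(\L\geqz)$ directly.

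\textbf{The main obstacle} is essentially bookkeeping rather than any deep difficulty: one must be careful that the notation $\L\geqz$ in the statement refers to the connective part of the \emph{induced} $t$-structure on $\L$, and that this is a legitimate stable-category-with-$t$-structure to which the reconstruction lemma applies. The substantive inputs — that the induced $t$-structure exists and is right complete — are precisely what Remark \ref{rm:t-stable-truncation-homotopy-the-same-functors} and Proposition \ref{prop:induced-t-structure-on-stable-localizing} supply, so the proof reduces to assembling these facts and citing Lemma \ref{lm:right-complete-then-equiv-to-sp}. I would therefore expect the proof to be short, with the only genuine verification being that the hypotheses transfer cleanly from $\C$ to $\L$, which the preceding results have been set up to guarantee.
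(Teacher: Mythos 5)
Your proposal is correct and is exactly the paper's argument: the proof there simply combines \cref{prop:induced-t-structure-on-stable-localizing} (the induced $t$-structure on $\L$ is right complete) with \cref{lm:right-complete-then-equiv-to-sp} applied to $\L$. Your additional bookkeeping about $\L\geqz = \C\geqz \cap \L$ being the connective part of the induced $t$-structure is implicit in the paper but correctly identified.
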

\begin{proof}
    This follows directly from \cref{prop:induced-t-structure-on-stable-localizing} and \cref{lm:right-complete-then-equiv-to-sp}.
\end{proof}

Using this we can increase the strength of \cref{prop:induced-t-structure-on-stable-localizing} by also incorporating compatibility with filtered colimits. Recall that we use the name $t$-stable category for a stable category with a right complete $t$-structure compatible with filtered colimits. 

\begin{lemma}
    \label{lm:localizing-inherits-completeness-and-colimits}
    Let $\C$ be a $t$-stable category and $\L$ a localizing subcategory. If $\L$ is $t$-stable, then $\L$ is itself a $t$-stable category. 
\end{lemma}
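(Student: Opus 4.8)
The plan is to verify directly that $\L$, equipped with the induced $t$-structure $(\C\geqz\cap\L,\, \C\leqz\cap\L)$ of \cref{rm:t-stable-truncation-homotopy-the-same-functors}, satisfies the three defining requirements of a $t$-stable category: that $\L$ is a presentable stable category with an accessible $t$-structure, that this $t$-structure is right complete, and that it is compatible with filtered colimits. Right completeness is already \cref{prop:induced-t-structure-on-stable-localizing}, and throughout I would use that the truncations $\tau\geqz,\tau\leqz$ on $\L$ are the restrictions of those on $\C$ and that colimits in $\L$ agree with those in $\C$. One tempting shortcut is to show that $\L\geqz=\C\geqz\cap\L$ is a \emph{prestable} localizing subcategory of $\C\geqz$ and then invoke \cref{cor:t-stable-implies-equiv-to-sp} together with the correspondence of \cref{rm:grothendieck-iff-right-complete-and-colims}. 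I would deliberately avoid this route: closure under sub-objects would force $\L^\heart$ to be closed under sub-objects in $\C^\heart$ (a subobject $C'\to C$ in the heart realizes a kernel), which a general weak Serre subcategory need not be. A direct verification is therefore the safer path.

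Compatibility with filtered colimits is immediate. Given a filtered diagram in $\L\leqz=\C\leqz\cap\L$, its colimit may be computed in $\C$ because $\L$ is closed under filtered colimits; this colimit lies in $\C\leqz$ because the $t$-structure on $\C$ is compatible with filtered colimits, and it lies in $\L$ because $\L$ is localizing, hence it lies in $\L\leqz$.

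The main obstacle is presentability and accessibility. For presentability of $\L$, I would use that $\L$ is the kernel of an exact localization $L\:\C\to\D$; since $L$ is a left adjoint, hence accessible and colimit-preserving, the kernel $\L\simeq\C\times_\D 0$ is a limit of presentable categories along accessible functors and is therefore presentable. For accessibility of the induced $t$-structure I must show $\L\geqz=\C\geqz\cap\L$ is presentable. Here I would use that compatibility with filtered colimits makes $\tau\geqz$ preserve filtered colimits, so that the restriction $\tau\geqz|_\L\:\L\to\L$ — which lands in $\L$ by $t$-stability — is an accessible coreflection of the presentable category $\L$ onto $\L\geqz$; the essential image of an accessible colocalization of a presentable category is again presentable. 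Assembling these four points, namely that $\L$ is presentable stable, accessible, right complete, and compatible with filtered colimits, shows that $\L$ is a $t$-stable category, and by construction its $t$-structure is the induced one.
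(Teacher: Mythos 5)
Your argument is correct, but it takes a genuinely different route from the paper's. The paper proves this lemma in three lines: right completeness from \cref{prop:induced-t-structure-on-stable-localizing}, then an appeal to \cite[C.5.2.1(1)]{lurie_SAG} to conclude that $\L\geqz$ is Grothendieck prestable (which packages both presentability and compatibility with filtered colimits), and finally \cref{cor:t-stable-implies-equiv-to-sp} to identify $\L$ with $\Sp(\L\geqz)$. You instead verify the clauses of the definition by hand: filtered-colimit compatibility by computing the colimit in $\C$ and using that $\C\leqz$ and $\L$ are each closed under filtered colimits; presentability of $\L$ via its realization as the kernel of an exact localization; and presentability of $\L\geqz$ as the essential image of the accessible coreflection $\tau\geqz|_{\L}$. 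What your approach buys is robustness: the ``tempting shortcut'' you decline is essentially the step the paper takes, since C.5.2.1(1) is a statement about \emph{prestable localizing} subcategories of $\C\geqz$, i.e.\ ones closed under sub-objects, and your observation that such closure would force $\L^\heart$ to be closed under subobjects in $\C^\heart$ --- which a weak Serre subcategory need not be --- is exactly on target. For a general $t$-stable $\L$ the hypothesis of that citation is therefore not obviously satisfied (it is for $t$-exact $\L$, by \cref{rm:kernel-of-t-exact-then-prestable-localizing}), and your direct verification supplies the compatibility and presentability claims without it, at the cost of length. The one point to keep an eye on is that realizing $\L$ as the kernel of an exact localization itself rests on an accessibility hypothesis (the adjoint-functor-theorem step in the paper's earlier lemma), so you are trading one accessibility assumption for another; this is consistent with the paper's standing conventions, but worth stating explicitly.
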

\begin{proof}
    By \cref{prop:induced-t-structure-on-stable-localizing} we know that the induced $t$-structure on $\L$ is right complete. By \cite[C.5.2.1(1)]{lurie_SAG} $\L\geqz$ is Grothendieck prestable, hence the $t$-structure on its stabilization $\Sp(\L\geqz)$ is compatible with filtered colimits by definition, see \cite[C.1.4.1]{lurie_SAG}. This stabilization is by \cref{cor:t-stable-implies-equiv-to-sp} equivalent to $\L$, completing the proof. 
\end{proof}

Recall that any stable localizing subcategory $\L\subseteq \C$ is equivalently determined as the acyclic objects to an exact localization functor $L\:\C\to\D$. We want a similar fact to hold for the $t$-stable ones. The naïve guess could perhaps be that $\L$ is $t$-stable if and only if the localization functor $L$ is $t$-exact. This turns out to be too strong of a condition on the nose, but a  very interesting condition nonetheless. 

\begin{lemma}
    \label{lm:t-exact-then-t-stable-kernel}
    Let $L\:\C\to\D$ be a localization of stable categories with $t$-structures. If $L$ is $t$-exact, then $\Ker(L)$ is a $t$-stable localizing subcategory.  
\end{lemma}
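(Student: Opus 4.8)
The plan is to show that $\Ker(L)$ satisfies the two defining conditions of a $t$-stable localizing subcategory: that it is a stable localizing subcategory, and that it is closed under the truncation functors $\tau\geqz$ and $\tau\leqz$. The first condition is essentially immediate, since $L$ is a localization and hence $\Ker(L)$ is automatically a stable localizing subcategory. So the real content lies in verifying closure under truncation.

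First I would fix an object $X \in \Ker(L)$, meaning $L(X) \simeq 0$, and aim to show $L(\tau\geqz X) \simeq 0$ and $L(\tau\leqz X) \simeq 0$. The key observation is that $L$ is $t$-exact, so it carries the truncation functors of $\C$ to those of $\D$: that is, $L \circ \tau\geqz \simeq \tau\geqz \circ L$ and $L \circ \tau\leqz \simeq \tau\leqz \circ L$ as functors $\C \to \D$. This compatibility is precisely what $t$-exactness buys us --- a right $t$-exact functor commutes with $\tau\leqz$ up to the obvious natural comparison and a left $t$-exact functor commutes with $\tau\geqz$, so a $t$-exact functor commutes with both. I would either cite this as a standard consequence of $t$-exactness or derive it quickly from the fiber sequence $\tau\geqz X \to X \to \tau\leqz X[-1]$ defining the truncations, using that $L$ is exact and sends $\C\geqz$ into $\D\geqz$ and $\C\leqz$ into $\D\leqz$.

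Given this compatibility, the computation is short: $L(\tau\geqz X) \simeq \tau\geqz L(X) \simeq \tau\geqz 0 \simeq 0$, and identically $L(\tau\leqz X) \simeq \tau\leqz L(X) \simeq 0$. Hence both $\tau\geqz X$ and $\tau\leqz X$ lie in $\Ker(L)$, which is exactly the condition making $\Ker(L)$ a $t$-stable localizing subcategory.

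The main obstacle --- really the only point requiring care --- is establishing the commutation of $L$ with the truncation functors cleanly. One must check that $t$-exactness genuinely gives $L \circ \tau\leqz \simeq \tau\leqz \circ L$, and not merely a one-sided natural transformation. The cleanest route is to apply $L$ to the canonical fiber sequence $\tau\geqz X \to X \to \tau\leqz X[-1]$; since $L$ is exact this yields a fiber sequence $L\tau\geqz X \to LX \to L\tau\leqz X[-1]$ in $\D$, and because $L$ is $t$-exact the outer terms satisfy $L\tau\geqz X \in \D\geqz$ and $L\tau\leqz X \in \D\leqz$. By the uniqueness of the truncation fiber sequence for $LX$ in $\D$, this identifies $L\tau\geqz X$ with $\tau\geqz LX$ and $L\tau\leqz X$ with $\tau\leqz LX$, which is what we need.
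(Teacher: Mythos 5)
Your proof is correct and follows essentially the same route as the paper: the paper's proof is exactly the computation $L(\tau\geqz X)\simeq \tau\geqz L(X)\simeq 0$ (and dually), taking the commutation of $L$ with truncations as a known consequence of $t$-exactness. Your extra justification of that commutation via the truncation fiber sequence and its uniqueness is the standard argument and fills in a detail the paper leaves implicit.
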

\begin{proof}
    Let $X\in \Ker(L)$. Since $L$ is $t$-exact we have $L(\tau\geqz X)\simeq \tau\geqz L(X)\simeq 0$, hence also $\tau\geqz X$ is in $\Ker(L)$. We have $\tau\leqz X\in \Ker(L)$ by an identical argument.  
\end{proof}

We can then relate this to the prestable situation via the following lemma. 

\begin{lemma}[{\cite[C.2.4.4]{lurie_SAG}}]
    \label{lm:exact-functor-induces-left-exact-functor}
    If $F\:\C\to \D$ is an $t$-exact functor between stable categories with right complete $t$-structures, then the induced functor of Grothendieck prestable categories $F\geqz\:\C\geqz\to\D\geqz$ is left exact. 
\end{lemma}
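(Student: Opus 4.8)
The plan is to unwind the definition of $F\geqz$ and of left exactness, and then to reduce everything to the single fact that a $t$-exact functor commutes with the truncation functor $\tau\geqz$. Since $F$ is right $t$-exact we have $F(\C\geqz)\subseteq\D\geqz$, so $F\geqz$ is simply the restriction of $F$: writing $i\:\C\geqz\to\C$ and $i'\:\D\geqz\to\D$ for the inclusions, we have $i'\circ F\geqz\simeq F\circ i$. Left exactness means that $F\geqz$ preserves finite limits; as the exact functor $F$ preserves the zero object, it suffices to treat an arbitrary finite diagram $D\:K\to\C\geqz$ (in particular pullbacks).

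The crucial observation is that finite limits in the prestable category $\C\geqz$ are \emph{not} computed as in $\C$. Since the inclusion $i$ has right adjoint $\tau\geqz$, the subcategory $\C\geqz$ is coreflective, and a standard adjunction (Yoneda) computation shows that the limit of $D$ in $\C\geqz$ is $\tau\geqz\big(\lim_K i\circ D\big)$, i.e.\ the connective cover of the ambient limit taken in $\C$. A naïve appeal to the exactness of $F$ therefore does not finish the argument: one must also control how $F$ interacts with the coreflector $\tau\geqz$.

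First I would prove that $F$ commutes with truncations. Given $X\in\C$, applying the exact functor $F$ to the truncation fiber sequence $\tau\geqz X\to X\to\tau_{\leq -1}X$ yields a fiber sequence $F(\tau\geqz X)\to F(X)\to F(\tau_{\leq -1}X)$ in $\D$. By $t$-exactness the outer terms satisfy $F(\tau\geqz X)\in\D\geqz$ and $F(\tau_{\leq -1}X)\in\D_{\leq -1}$, so by uniqueness of the truncation fiber sequence in $\D$ we may identify $F(\tau\geqz X)\simeq\tau\geqz F(X)$. The main computation is then immediate: setting $L:=\lim_K i\circ D\in\C$,
\[
i'\,F\geqz\big(\lim^{\C\geqz}D\big)\simeq F\big(\tau\geqz L\big)\simeq\tau\geqz F(L)\simeq\tau\geqz\big(\lim_K F\circ i\circ D\big)\simeq\tau\geqz\big(\lim_K i'\circ F\geqz\circ D\big)\simeq i'\,\lim^{\D\geqz}\big(F\geqz\circ D\big),
\]
using in order the coreflection formula, the commutation $F\tau\geqz\simeq\tau\geqz F$, the exactness of $F$ (which preserves the $\C$-limit $L$), the identity $F\circ i\simeq i'\circ F\geqz$, and the coreflection formula in $\D\geqz$. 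Since $i'$ is fully faithful this gives $F\geqz(\lim^{\C\geqz}D)\simeq\lim^{\D\geqz}(F\geqz\circ D)$.

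I expect the main obstacle to be conceptual rather than computational: keeping straight that limits in $\C\geqz$ and $\D\geqz$ differ from those in $\C$ and $\D$ by an application of $\tau\geqz$, so that the proof genuinely rests on the commutation $F\tau\geqz\simeq\tau\geqz F$ rather than on bare exactness. Right completeness of the $t$-structures is used to ensure that $\C\geqz$ and $\D\geqz$ are Grothendieck prestable — so that ``left exact functor of Grothendieck prestable categories'' is the correct target statement — but it plays no role in the limit-preservation computation itself.
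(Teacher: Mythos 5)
Your proof is correct, and it supplies an actual argument where the paper itself offers none: the lemma is simply cited from Lurie, so there is no in-text proof to compare against. The two ingredients you isolate are exactly the right ones. First, since the inclusion $\C\geqz\hookrightarrow\C$ admits the right adjoint $\tau\geqz$, finite limits in the coreflective subcategory $\C\geqz$ are computed as $\tau\geqz$ applied to the ambient limit in $\C$ --- this is the point a naive argument misses, and you flag it explicitly. Second, a $t$-exact functor commutes with $\tau\geqz$; your derivation (apply the exact functor $F$ to the truncation fiber sequence $\tau\geqz X\to X\to \tau_{\leq -1}X$, note the outer terms land in $\D\geqz$ and $\D_{\leq -1}$ by right and left $t$-exactness respectively, and invoke the essential uniqueness of such a decomposition of $F(X)$) is the standard one and is complete. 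Each step in your final chain of equivalences is the canonical comparison map, so the composite identifies $F\geqz(\lim D)\to\lim(F\geqz\circ D)$ as an equivalence, which is what left exactness requires. One small correction to your closing remark: right completeness by itself does not make $\C\geqz$ Grothendieck prestable --- that requires compatibility with filtered colimits, which in this paper is part of the standing $t$-stable convention rather than of the hypotheses as literally stated in this lemma --- but, as you correctly observe, neither condition plays any role in the limit-preservation computation, which uses only exactness and $t$-exactness of $F$.
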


\begin{remark}
    \label{rm:kernel-of-t-exact-then-prestable-localizing}
    Since prestable localizing subcategories are determined by left exact localization functors, see \cref{prop:Lurie-prestable-localizing-left-exact-functor}, \cref{lm:exact-functor-induces-left-exact-functor} means that if $\L$ is a stable localizing subcategory determined by a $t$-exact localization functor $\C\to \D$, then the connected part $\L\geqz$ is a prestable localizing subcategory of $\C\geqz$. 
\end{remark}

We also want a converse to this statement.

\begin{lemma}
    \label{lm:prestable-localizing-then-kernel-of-t-exact}
    If $\L\geqz$ is a prestable localizing subcategory of a Grothendieck prestable category $\C\geqz$, then its stabilization $\Sp(\L\geqz)$ is the kernel of a $t$-exact localization $L$ on $\Sp(\C\geqz)$.  
\end{lemma}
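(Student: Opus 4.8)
The plan is to produce the $t$-exact localization by stabilizing the left exact localization that already witnesses $\L\geqz$ as a prestable localizing subcategory. By \cref{prop:Lurie-prestable-localizing-left-exact-functor} there is a Grothendieck prestable category $\D\geqz$ and a left exact localization $L'\:\C\geqz\to\D\geqz$ with fully faithful right adjoint $G'\:\D\geqz\to\C\geqz$ such that $\L\geqz=\Ker(L')$. I would then pass to stabilizations, setting $\D:=\Sp(\D\geqz)$ and $L:=\Sp(L')\:\C\to\D$, where $\C\simeq\Sp(\C\geqz)$ by \cref{lm:right-complete-then-equiv-to-sp}. By \cref{rm:grothendieck-iff-right-complete-and-colims} both $\C$ and $\D$ are $t$-stable, and since the equivalence recorded there is functorial we obtain $L\geqz\simeq L'$ under the identifications $\C\geqz\simeq\bigl(\Sp(\C\geqz)\bigr)\geqz$ and $\D\geqz\simeq\bigl(\Sp(\D\geqz)\bigr)\geqz$.

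Next I would check that $L$ is a localization. Since $G'$ is a right adjoint it is left exact, so it too stabilizes, and as $\Sp(-)$ is functorial and preserves adjunctions we obtain an adjunction $L\dashv R$ with $R:=\Sp(G')$ and $R\geqz\simeq G'$. The counit $L'G'\to\mathrm{id}$ is an equivalence because $G'$ is fully faithful; applying $\Sp(-)$ shows that $LR\to\mathrm{id}$ is an equivalence, so $R$ is fully faithful and $L$ is an exact localization of stable categories.

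The main point is $t$-exactness of $L$, and this is where I expect the real work to sit. Right $t$-exactness is immediate: $L\geqz\simeq L'$ carries $\C\geqz$ into $\D\geqz$, so $L(\C\geqz)\subseteq\D\geqz$. For left $t$-exactness I would argue through the right adjoint, using that $L$ is left $t$-exact if and only if $R$ is right $t$-exact, i.e. $R(\D\geqz)\subseteq\C\geqz$; but $R\geqz\simeq G'$ already lands in $\C\geqz$, giving exactly this containment, so $L$ is $t$-exact. Alternatively one can invoke \cref{rm:grothendieck-iff-right-complete-and-colims} directly, to the effect that the stabilization of a left exact functor is $t$-exact, which is the converse of \cref{lm:exact-functor-induces-left-exact-functor}. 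The subtle step to justify carefully is precisely that $\Sp(-)$ restricts back to the original functors on connective parts and sends the adjunction $L'\dashv G'$ to an adjunction of stable categories; once this is in hand, $t$-exactness falls out of the fact that both $L'$ and $G'$ preserve connective objects.

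Finally I would identify the kernel. As $L$ is $t$-exact, \cref{lm:t-exact-then-t-stable-kernel} shows $\Ker(L)$ is a $t$-stable localizing subcategory. For connective $X$ we have $L(X)\simeq L'(X)$, so $(\Ker L)\geqz=\Ker(L)\cap\C\geqz=\Ker(L')=\L\geqz$. Applying \cref{cor:t-stable-implies-equiv-to-sp} to the $t$-stable subcategory $\Ker(L)$ then yields $\Ker(L)\simeq\Sp\bigl((\Ker L)\geqz\bigr)=\Sp(\L\geqz)$, as desired.
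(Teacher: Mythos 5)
Your overall strategy is the paper's: take the left exact localization $L'\:\C\geqz\to\D\geqz$ furnished by \cref{prop:Lurie-prestable-localizing-left-exact-functor}, stabilize it, and check $t$-exactness. But your primary argument for the key step --- left $t$-exactness of $L$ --- rests on a false duality. For an adjunction $L\dashv R$ the correct statement is: $L$ is \emph{right} $t$-exact if and only if $R$ is \emph{left} $t$-exact. This is because $L(\C\geqz)\subseteq\D\geqz$ unwinds, via $X\in\D\geqz \Leftrightarrow \Map_\D(X,Y)\simeq 0$ for all $Y\in\D_{\leq -1}$ and the adjunction $\Map_\D(LX,Y)\simeq\Map_\C(X,RY)$, to $R(\D_{\leq -1})\subseteq\C_{\leq -1}$. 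The pairing you assert, ``$L$ left $t$-exact iff $R$ right $t$-exact,'' is the one you would get from $R\dashv L$, and it does not follow from $L\dashv R$: the mapping spaces $\Map_\D(Y,LX)$ that control coconnectivity of $LX$ are not computed by this adjunction. (Concretely, for $\mathbb{F}_p\otimes^{\mathbb{L}}_{\mathbb{Z}}-$ with right adjoint the forgetful functor, the right adjoint is right $t$-exact while the left adjoint is not left $t$-exact.) So the step you yourself identify as ``where the real work sits'' is exactly where the argument breaks. A secondary issue: $\Sp(G')$ is not obviously defined, since the right adjoint of a left exact localization need not preserve colimits; one gets $R$ as the right adjoint of $L$ by the adjoint functor theorem, and identifying $R\geqz$ with $G'$ already presupposes the $t$-exactness you are trying to establish.

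Your ``alternative'' route is the correct one, and it is what the paper actually does: left $t$-exactness of $\Sp(L')$ is \cite[C.3.2.1]{lurie_SAG} and right $t$-exactness is \cite[C.3.1.1]{lurie_SAG}. Note, however, that this is not the content of \cref{rm:grothendieck-iff-right-complete-and-colims} (which records the equivalence between Grothendieck prestable categories and $t$-stable categories); it is a genuine additional input using left exactness of $L'$ together with right completeness and compatibility with filtered colimits. Your final identification of the kernel ($(\Ker L)\geqz=\Ker(L')=\L\geqz$, hence $\Ker(L)\simeq\Sp(\L\geqz)$ via \cref{lm:t-exact-then-t-stable-kernel} and \cref{cor:t-stable-implies-equiv-to-sp}) is correct and usefully more explicit than the paper's proof, which leaves this implicit.
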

\begin{proof}
    By \cref{prop:Lurie-prestable-localizing-left-exact-functor} there is a left exact localization $L\geqz \: \C\geqz \to \D\geqz$ such that $\L\geqz$ is the kernel of $L\geqz$. In particular it is a colimit preserving functor. The induced functor $\Sp(\L\geqz)\:\Sp(\C\geqz)\to \Sp(\D\geqz)$ is then left $t$-exact by \cite[C.3.2.1]{lurie_SAG} and right $t$-exact by \cite[C.3.1.1]{lurie_SAG}. 
\end{proof}

\begin{remark}
    In particular, by \cref{lm:t-exact-then-t-stable-kernel} the stabilization $\Sp(\L\geqz)$ is a $t$-stable localizing subcategory. 
\end{remark}

In light of the above results we introduce the following definition. 

\begin{definition}
    A stable localizing subcategory $\L\subseteq \C$ is said to be \emph{$t$-exact} if it is the kernel of a $t$-exact localization. 
\end{definition}

\begin{remark}
    \label{rm:recurring-1}
    As we will have several definitions for different kinds of localizing subcategories, we will have a recurring remark about their dependencies. In this first such remark, we note that there is an implication
    \[t\text{-exact}\implies t\text{-stable}\]
    by \cref{lm:t-exact-then-t-stable-kernel}. 
\end{remark}

We can then conclude this section with the following bijection. 

\begin{corollary}
    \label{cor:t-exact-corresponds-to-prestable-localizing}
    For any $t$-stable category $\C$, there is a bijection between the collection of $t$-exact stable localizing subcategories $\L\subseteq \C$, and prestable localizing subcategories of $\C\geqz$, given by the mutually inverse functors $(-)\geqz$ and $\Sp(-)$. 
\end{corollary}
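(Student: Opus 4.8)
The plan is to verify that the two assignments are well-defined on the stated collections, and then to show they are mutually inverse by reducing the latter to the category-level equivalence of \cref{rm:grothendieck-iff-right-complete-and-colims}, together with the observation that every equivalence in sight is compatible with the ambient inclusions.

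First I would check well-definedness in both directions. Given a $t$-exact localizing subcategory $\L\subseteq\C$, its connected part $\L\geqz$ is a prestable localizing subcategory of $\C\geqz$ by \cref{rm:kernel-of-t-exact-then-prestable-localizing}. Conversely, given a prestable localizing subcategory $\L\geqz\subseteq\C\geqz$, its stabilization $\Sp(\L\geqz)$ is the kernel of a $t$-exact localization by \cref{lm:prestable-localizing-then-kernel-of-t-exact}, hence $t$-exact (and in particular $t$-stable, via \cref{rm:recurring-1}). So $(-)\geqz$ and $\Sp(-)$ really do land in the correct collections.

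Next I would show the two composites are the identity. Starting with a $t$-exact $\L$: it is $t$-stable by \cref{rm:recurring-1}, so \cref{cor:t-stable-implies-equiv-to-sp} supplies an equivalence $\L\simeq\Sp(\L\geqz)$. Starting with a prestable localizing $\L\geqz$: it is itself Grothendieck prestable by \cref{rm:prestable-localizing-is-Grothendieck}, so the equivalence of \cref{rm:grothendieck-iff-right-complete-and-colims} applied to $\L\geqz$ identifies $(\Sp(\L\geqz))\geqz$ with $\L\geqz$. A priori these are only equivalences of abstract categories; to upgrade them to equalities of subcategories I would invoke \cref{rm:t-stable-truncation-homotopy-the-same-functors}, which guarantees that the $t$-structure on the subcategory is the restriction of the ambient one, so that in particular $\L\geqz=\L\cap\C\geqz$ and the truncation functors on $\L$ agree with those on $\C$. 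Then the stabilization $\Sp(\L\geqz)$, computed inside $\Sp(\C\geqz)\simeq\C$ via right completeness (\cref{lm:right-complete-then-equiv-to-sp}), recovers exactly the objects of $\L$ rather than merely an abstractly equivalent category.

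The main obstacle I anticipate is precisely this last compatibility: promoting the categorical equivalences to honest identifications of full subcategories of $\C$ (resp. $\C\geqz$), avoiding any spurious discrepancy between the sub-$t$-structure on $\L$ and the ambient one. I would handle it by fixing the inclusions throughout and checking the pointwise criterion: an object $X\in\C$ lies in $\L$ if and only if each truncation $\tau_{\geq -n}X$ lies in $\L$ — which holds by right completeness and $t$-stability, since $X\simeq\colim_n \tau_{\geq -n}X$ — and $\tau_{\geq -n}X\in\C_{\geq -n}$ lies in $\L$ exactly when the shift $\tau_{\geq -n}X[n]$ defines an object of $\L\geqz=\L\cap\C\geqz$. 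This is precisely the data cut out by $\Sp(\L\geqz)\subseteq\Sp(\C\geqz)$, so once the identifications are seen to respect the inclusions, mutual inverseness is immediate.
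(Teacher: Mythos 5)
Your proposal is correct and follows essentially the same route as the paper: well-definedness via \cref{rm:kernel-of-t-exact-then-prestable-localizing} and \cref{lm:prestable-localizing-then-kernel-of-t-exact}, and mutual inverseness via \cref{cor:t-stable-implies-equiv-to-sp} together with \cref{rm:prestable-localizing-is-Grothendieck}. The extra care you take in upgrading the abstract equivalences to identifications of full subcategories is a reasonable elaboration of a point the paper leaves implicit, but it does not change the argument.
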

\begin{proof}
    From \cref{rm:kernel-of-t-exact-then-prestable-localizing} and \cref{lm:prestable-localizing-then-kernel-of-t-exact} we have maps 
    \[\texact\overset{(-)\geqz}\to \prlocalizing\] 
    and 
    \[\prlocalizing\overset{\Sp(-)}\to \texact\] 
    These are mutually inverse functors by \cref{cor:t-stable-implies-equiv-to-sp}, and the fact that any prestable localizing subcategory of a Grothendieck prestable category is itself a Grothendieck prestable category, see \cref{rm:prestable-localizing-is-Grothendieck}. 
\end{proof}


\begin{remark}
    The above corollary gives us a $t$-exact approximation result for $t$-stable localizing subcategories. Suppose we have a $t$-stable localizing subcategory $\L\subseteq \C$. We can choose the smallest prestable localizing subcategory of $\C\geqz$ containing $\L\geqz$, which we denote $\Loc\geqz(\L\geqz)$. Upon stabilization we obtain by \cref{cor:t-exact-corresponds-to-prestable-localizing} a stable localizing subcategory $\L^t$
    that is the kernel of a $t$-exact functor. As $\Sp(\L\geqz)\simeq \L$, we know that $\L\subseteq \L^t$, making $\L^t$ a $t$-exact approximation of $\L$. It is also the smallest such approximation, and, naturally, $\L$ is $t$-exact if and only if $\L\simeq \L^t$. 
\end{remark}

\section{The correspondences}

The goal of this section is to prove our two main results. We start with the classification of weak localizing subcategories, before proving the non-weak case. The former does not need any of the connections to prestable categories, hence can also be viewed as a self contained argument. The latter, however, relies on Lurie's correspondence between certain prestable localizing subcategories of $\C\geqz$ and localizing subcategories of $\C^\heart$.

\subsection{Classification of weak localizing subcategories}
\label{ssec:classificartion-weak-localizing}

The goal of this section is to prove \cref{thm:A}, and the following lemma is the first step for obtaining the wanted correspondence. 

\begin{lemma}
    \label{lm:t-stable-then-weak-localizing-heart}
    Let $\C$ be a $t$-stable category. If $\L$ is a $t$-stable localizing subcategory, then $\L^\heart$ is a weak localizing subcategory of $\C^\heart$. 
\end{lemma}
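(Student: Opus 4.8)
The plan is to verify that $\L^\heart = \C^\heart \cap \L$ satisfies the two defining conditions of a weak localizing subcategory: it is a weak Serre subcategory of $\C^\heart$, and it is closed under arbitrary coproducts. The key tool throughout will be \cref{rm:t-stable-truncation-homotopy-the-same-functors}, which tells us that the truncation functors $\tau_{\geq n}, \tau_{\leq n}$, and hence the heart-valued homotopy functors $\pi_n^\heart$, agree whether computed in $\L$ or in $\C$, together with the fact that colimits in $\L$ are computed as in $\C$.

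First I would check the weak Serre condition. Recall from the remark after the definition that a full subcategory is a weak Serre subcategory exactly when it is closed under kernels, cokernels, and extensions. So I would take a morphism $f \: A \to B$ in $\C^\heart$ with $A, B \in \L^\heart$, and I must show $\ker f$ and $\coker f$ lie in $\L^\heart$. The idea is to pass to the fiber sequence in $\C$: letting $F = \Fib(f)$, the long exact sequence of heart-valued homotopy groups identifies $\ker f \simeq \pi_0^\heart F$ and $\coker f \simeq \pi_{-1}^\heart F$ (up to the indexing conventions in force). Since $A, B \in \L$ and $\L$ is stable (hence closed under fibers), we get $F \in \L$; then since $\L$ is $t$-stable, all its homotopy groups $\pi_n^\heart F$ lie in $\L$, and being objects of the heart they lie in $\L^\heart$. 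This handles kernels and cokernels simultaneously. For extensions, given a short exact sequence $0 \to A \to B \to C \to 0$ in $\C^\heart$ with $A, C \in \L^\heart$, the corresponding fiber sequence $A \to B \to C$ in $\C$ exhibits $B$ as sitting in a fiber sequence between two objects of $\L$, so $B \in \L$, and $B \in \C^\heart$ gives $B \in \L^\heart$.

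Next I would check closure under coproducts. Given a family $\{A_i\}$ in $\L^\heart$, form the coproduct $\coprod_i A_i$ taken in $\C$; since $\L$ is localizing it is closed under coproducts, so this lies in $\L$. The coproduct in the abelian category $\C^\heart$ is then $\pi_0^\heart(\coprod_i A_i)$, which lies in $\L^\heart$ because $\L$ is $t$-stable and the result is an object of the heart. Here I would invoke that $\C^\heart$ is Grothendieck abelian so the coproduct exists, and that $\pi_0^\heart$ applied to a coproduct of heart objects computes the heart-level coproduct.

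The main obstacle I anticipate is bookkeeping around the fiber sequences and the long exact sequence of heart-valued homotopy groups, specifically getting the indexing right under Lurie's homological convention so that $\ker$ and $\coker$ are correctly identified with the appropriate $\pi_n^\heart F$, and confirming that the coproduct computed in $\C$ genuinely presents the categorical coproduct in $\C^\heart$ after applying $\pi_0^\heart$. None of these steps is deep, but each relies essentially on the identification of truncations and homotopy groups between $\L$ and $\C$ from \cref{rm:t-stable-truncation-homotopy-the-same-functors}, so I would state that dependence explicitly at the outset and then let the long exact sequence do the work.
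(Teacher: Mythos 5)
Your proposal is correct, and the coproduct half is essentially identical to the paper's argument (coproduct taken in $\C$ lands in $\L$ since $\L$ is localizing, and $\pi_0^\heart$ preserves coproducts). Where you differ is in the weak Serre half: the paper does not run the long exact sequence itself, but instead observes that the $t$-exact fully faithful inclusion $\L \to \C$ induces, by a cited result of Antieau--Gepner--Heller, an exact fully faithful functor $\L^\heart \to \C^\heart$ whose image is closed under extensions, which immediately gives the weak Serre condition. Your route is the self-contained unwinding of that citation: for $f\: A \to B$ with $A, B \in \L^\heart$, the fiber $F = \Fib(f)$ lies in $\L$ (stability), its only nonvanishing heart-valued homotopy groups are $\ker f$ and $\mathrm{coker}\, f$, and $t$-stability of $\L$ forces these into $\L^\heart$; extensions come from the corresponding fiber sequence in $\C$ and thickness of $\L$. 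Both arguments rest on the same two inputs --- that $\L$ is closed under the truncation functors and that these agree with those of $\C$ (your \cref{rm:t-stable-truncation-homotopy-the-same-functors}) --- so the mathematics is the same; what your version buys is independence from the external reference and an explicit identification of $\ker$ and $\mathrm{coker}$ as $\pi_0^\heart F$ and $\pi_{-1}^\heart F$, at the cost of the indexing bookkeeping you flag, which indeed works out under the homological convention. One small point worth making explicit in your write-up is that $\L^\heart$ as the heart of the induced $t$-structure coincides with $\C^\heart \cap \L$, which is exactly what \cref{rm:t-stable-truncation-homotopy-the-same-functors} provides and which you correctly invoke at the outset.
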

\begin{proof}
    As $\L$ is $t$-stable we know that the fully faithful inclusion $\L\to \C$ is $t$-exact. By \cite[2.19]{antieau-gepner-heller_2019} the induced functor $\L^\heart\to\C^\heart$ is exact and fully faithful, and $\L^\heart$ is closed under extensions. In particular, $\L^\heart$ is an abelian subcategory closed under extensions, so it remains only to show that $\L^\heart$ is closed under coproducts.

    As $\L^\heart \subseteq \L$ we can include a coproduct of objects in $\L^\heart$ into $\L$. The inclusion and $\pi_n^\heart$ preserves coproducts for all $n$. Hence, as $\L$ is localizing it is closed under coproducts, implying that also $\L^\heart$ is. 
\end{proof}

This means that the heart construction $\C \longmapsto \C^\heart$ determines a map
\[\tcompatible\overset{(-)^\heart}\to \weaklocalizing\] 
for any $t$-stable category $\C$. 

This map is in general not injective, meaning we have to restrict our domain. As described in the introduction, we will use the localizing subcategories where objects can be identified by their heart-valued homotopy groups. The precise definition is as follows. 

\begin{definition}
    \label{def:pi-stable-localizing-subcategory}
    Let $\C$ be a stable category with a $t$-structure. A stable localizing subcategory $\L$ is said to be \emph{$\pi$-stable} if $X\in \L$ if and only if $\pi_n^\heart X\in \L^\heart$ for all $n$. 
\end{definition}

\begin{remark}
    The terminology is motivated by, and generalizes, Takahashi's definition of $H$-stable subcategories of the unbounded derived category of a commutative noetherian ring, see \cite[2.11]{takahashi_2009}. These are subcategories of derived categories where one can detect containment by checking on homology. Letting $\C=\Der(R)$ for a Noetherian commutative ring $R$ considered with the natural $t$-structure, then we have $\pi_n^\heart = H_n$, meaning that being $\pi$-stable is equivalent to being $H$-stable. Note, however, that the homological algebra literature often uses cohomological indexing, while we follow Lurie's convention of using the homological one. 
\end{remark}

\begin{remark}
    The above definition is equivalent to Zhang--Cai's generalization of Takahashi's $H$-stable subcategories, see \cite{zhang-cai_2017}. Note that the authors of loc. cit. do not consider the subcategories themselves to have $t$-structures, but rather just includes the image of $\pi_k^\heart$ back into the stable category. 
\end{remark}

\begin{example}
    \label{ex:abelian-torsion}
    Let $R$ be a commutative noetherian ring and $I\subseteq R$ a finitely generated regular ideal. Then the full subcategory of $I$-power torsion modules, $\Mod_R^{I-tors}\subseteq \Mod_R$ is an abelian weak localizing subcategory. It is in particular a Grothendieck abelian category, hence has a derived category $\Der(\Mod_R^{I-tors})$. We can also form the derived $I$-torsion category $\Der(R)^{I-tors}$, which is the localizing subcategory generated by $A/I$. The categories $\Der(R)^{I-tors}$ and $\Der(\Mod_R^{I-tors})$ are both $\pi$-stable localizing subcategories of $\Der(R)$ with heart $\Mod_R^{I-tors}$ --- see \cite{greenlees-may_92} or \cite{barthel-heard-valenzuela_2018} for more details. These categories are equivalent, seemingly implying that having the same heart is enough for the stable categories to be equivalent as well. This also generalizes to other similar situations, see for example \cite[3.15, 3.17]{barthel-heard-valenzuela_2020} or \cite[2.21]{aambo_2024}. Such equivalences were one of the main inspirations for this paper, where the author wanted an easier way of checking similar statements, which led to the main result \cref{thm:A}. 
\end{example}

\begin{proposition}
    \label{prop:pi-stable-then-t-stable}
    Let $\L$ be a localizing subcategory of $\C$. If $\L$ is $\pi$-stable, then $\L$ is $t$-stable. 
\end{proposition}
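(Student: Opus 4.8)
The plan is to leverage the defining feature of $\pi$-stability: membership in $\L$ can be detected on heart-valued homotopy groups. To conclude that $\L$ is $t$-stable I must show that for every $X\in\L$ both truncations $\tau\geqz X$ and $\tau\leqz X$ again lie in $\L$. Since $\L$ is assumed $\pi$-stable, it suffices to verify that $\pi_n^\heart(\tau\geqz X)\in\L^\heart$ and $\pi_n^\heart(\tau\leqz X)\in\L^\heart$ for all $n$, and then invoke the reverse implication of \cref{def:pi-stable-localizing-subcategory} to land the truncations back in $\L$.

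First I would record what $\pi$-stability tells us about $X$ itself. As $X\in\L$, the forward implication gives $\pi_n^\heart X\in\L^\heart$ for every $n$; these are the only building blocks the computation will need. The key step is then the standard description of the homotopy groups of a truncation coming from the basic properties of the $t$-structure: for $n\geq 0$ one has $\pi_n^\heart(\tau\geqz X)\simeq\pi_n^\heart X$, while for $n<0$ one has $\pi_n^\heart(\tau\geqz X)\simeq 0$, and dually $\pi_n^\heart(\tau\leqz X)\simeq\pi_n^\heart X$ for $n\leq 0$ and $\pi_n^\heart(\tau\leqz X)\simeq 0$ for $n>0$. In either case each group is either the zero object or one of the $\pi_n^\heart X$, hence lies in $\L^\heart$, using that $\L^\heart$ is an abelian subcategory containing $0$. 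Applying $\pi$-stability in the reverse direction yields $\tau\geqz X\in\L$ and $\tau\leqz X\in\L$, which is precisely the defining condition for $\L$ to be $t$-stable.

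The argument is essentially formal, and I do not expect a serious obstacle: the content reduces to correctly invoking the homotopy groups of truncations together with the two directions of the $\pi$-stability equivalence. The only point requiring a moment's care is that $\pi$-stability is stated as an ``if and only if'', so I must be sure to feed the reverse implication objects whose homotopy groups are already known to lie in $\L^\heart$ --- which is guaranteed here, since each such group is either zero or a $\pi_n^\heart X$ with $X\in\L$.
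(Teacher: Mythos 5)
Your proposal is correct and follows exactly the paper's own argument: compute $\pi_n^\heart$ of the truncations via the standard identities, observe each group is either zero or a $\pi_n^\heart X$ lying in $\L^\heart$, and conclude by the reverse direction of $\pi$-stability. The only difference is that you write out the $\tau\leqz$ case explicitly where the paper dismisses it as similar.
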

\begin{proof}
    Let $X\in \L$. We need to show that $\tau\geqz X\in \L$ and $\tau\leqz X\in \L$. The proofs are similar, hence we only cover the former. 
    
    We have $\pi_n^\heart \tau\geqz X \simeq \pi_n^\heart X$ for all $n\geq 0$ and $\pi_n^\heart \tau\geqz X\simeq 0$ for all $n<0$. This means that $\pi_n^\heart \tau\geqz X \in \L^\heart$ for all $n$, which implies $\tau\geqz X \in \L$ by the assumption that $\L$ was $\pi$-stable. 
\end{proof}

\begin{remark}
    \label{rm:recurring-2}
    In light of \cref{prop:pi-stable-then-t-stable} we can continue our recurring remark (see \cref{rm:recurring-1}) about the dependencies of the different definitions. We now have implications
    \begin{center}
        \begin{tikzcd}
            & \pi\text{-stable} \arrow[d, Rightarrow] \\
            t\text{-exact} \arrow[r, Rightarrow] & t\text{-stable}                    
        \end{tikzcd}
    \end{center}
    for any localizing subcategory $\L$ of a $t$-stable category $\C$. 
\end{remark}

\begin{remark}
    \label{rm:pi-stable-implies-t-stable}
    In particular, if $\L$ is a $\pi$-stable localizing subcategory then \cref{lm:localizing-inherits-completeness-and-colimits} implies that $\L$ is itself a $t$-stable category. This is rather convenient, as it allows us to treat nested pairs of $\pi$-stable localizing subcategories $\L_2 \subseteq \L_1 \subseteq \C$ either as both being subcategories of $\C$, or as $\L_2$ being a $\pi$-stable localizing subcategory of $\L_1$. 
\end{remark}

\cref{prop:pi-stable-then-t-stable} implies that the heart construction $\L\longmapsto \L^\heart$ gives a map 
\[\pistable\overset{(-)^\heart}\to \weaklocalizing\] 
as the heart of any $t$-stable localizing subcategory $\L\subseteq \C$ is an abelian weak localizing subcategory $\L^\heart \subseteq \C^\heart$ by \cref{lm:t-stable-then-weak-localizing-heart}. The claim of \cref{thm:A} is that this map is a bijection. 

It turns out that the $\pi$-stable localizing subcategories are the largest localizing subcategories with a given heart. This is the stable analog of \cite[C.5.2.5]{lurie_SAG} for prestable categories. 

\begin{lemma}
    \label{lm:pi-stable-are-the-biggest}
    Let $\C$ be a $t$-stable category. Given two $t$-stable localizing subcategories $\L_0$ and $\L_1$, where $\L_1$ is $\pi$-stable, then $\L_0 \subseteq \L_1$ if and only if $\L_0^\heart \subseteq \L_1^\heart$. 
\end{lemma}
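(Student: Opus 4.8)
The plan is to prove the two implications separately, with the forward direction being essentially formal and the reverse direction being where the hypotheses on $\L_1$ genuinely enter.

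For the forward direction, I would simply observe that for any $t$-stable localizing subcategory $\L$, the induced $t$-structure of \cref{rm:t-stable-truncation-homotopy-the-same-functors} identifies the heart as $\L^\heart = \L \cap \C^\heart$. Hence if $\L_0 \subseteq \L_1$, intersecting both sides with $\C^\heart$ immediately yields $\L_0^\heart \subseteq \L_1^\heart$, with no use of $\pi$-stability required.

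For the reverse direction, I would assume $\L_0^\heart \subseteq \L_1^\heart$ and take an arbitrary object $X \in \L_0$, aiming to place it in $\L_1$ via the defining characterization of $\pi$-stability. First I would show that $\pi_n^\heart X \in \L_0^\heart$ for every $n$. Since $\L_0$ is $t$-stable, its truncation functors agree with those of $\C$ and carry $X$ back into $\L_0$, so each $\pi_n^\heart X$ lies in $\L_0$; as it also lies in $\C^\heart$ by construction, it lies in $\L_0 \cap \C^\heart = \L_0^\heart$. By the assumed containment of hearts, this gives $\pi_n^\heart X \in \L_1^\heart$ for all $n$. Finally, invoking the $\pi$-stability of $\L_1$ from \cref{def:pi-stable-localizing-subcategory}, the condition $\pi_n^\heart X \in \L_1^\heart$ for all $n$ is precisely what is needed to conclude $X \in \L_1$.

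Since $X$ was arbitrary, this yields $\L_0 \subseteq \L_1$ and completes the argument. I do not expect any serious obstacle: the whole proof rests on the compatibility of homotopy groups with the induced $t$-structure on $\L_0$ and on the defining condition for $\pi$-stability of $\L_1$. The only point requiring genuine care is that the homotopy groups computed inside $\L_0$ coincide with those computed in $\C$, which is exactly what \cref{rm:t-stable-truncation-homotopy-the-same-functors} guarantees; everything else is formal.
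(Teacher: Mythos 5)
Your proposal is correct and follows essentially the same argument as the paper: the containment of hearts follows formally from $\L_i^\heart = \L_i \cap \C^\heart$ for $t$-stable localizing subcategories, and the converse uses that $\pi_n^\heart X \in \L_0^\heart \subseteq \L_1^\heart$ together with the $\pi$-stability of $\L_1$. The only cosmetic difference is that you justify $\pi_n^\heart X \in \L_0^\heart$ in slightly more detail than the paper does.
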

\begin{proof}
    First, notice that as both categories are $t$-stable the truncation functors and the homotopy groups functors $\pi_k^\heart$ are the same, see \cref{rm:t-stable-truncation-homotopy-the-same-functors}. 
    
    Assume $\L_0^\heart \subseteq \L_1^\heart$ and $X\in \L_0$. Then $\pi_k^\heart X\in \L_0^\heart \subseteq \L_1^\heart$ for all $k$. This implies that $X\in \L_1$ by the assumption that it is $\pi$-stable. 

    For the converse, assume $\L_0\subseteq \L_1$. As the truncation functors are the same in $\L_0$ and $\L_1$ we have that $\L_0$ is a $t$-stable localizing subcategory of the $t$-stable category $\L_1$, see \cref{rm:pi-stable-implies-t-stable}. In particular, $\L_0^\heart = \L_1^\heart \cap \L_0$, hence we have $\L_0^\heart \subseteq \L_1^\heart$. 
\end{proof}

This immediately implies the injectivity of our proposed one-to-one correspondence. 

\begin{corollary}
    \label{cor:pi-stable-heart-injective}
    For any $t$-stable category $\C$, the map
    \[\pistable\overset{(-)^\heart}\to \weaklocalizing\] 
    is injective. 
\end{corollary}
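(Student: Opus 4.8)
The plan is to deduce injectivity directly from \cref{lm:pi-stable-are-the-biggest}, which already carries all of the genuine content. First I would record that the map is well-defined on the stated domain: by \cref{prop:pi-stable-then-t-stable} every $\pi$-stable localizing subcategory is in particular $t$-stable, so \cref{lm:t-stable-then-weak-localizing-heart} guarantees that the heart of such a subcategory really is a weak localizing subcategory of $\C^\heart$. Thus $\L\longmapsto \L^\heart$ genuinely lands in $\weaklocalizing$.

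To establish injectivity I would take two $\pi$-stable localizing subcategories $\L_0, \L_1 \subseteq \C$ with $\L_0^\heart = \L_1^\heart$ and show $\L_0 = \L_1$. Since both are $t$-stable by \cref{prop:pi-stable-then-t-stable}, the hypotheses of \cref{lm:pi-stable-are-the-biggest} are met with either subcategory playing the role of the $\pi$-stable member. Applying the lemma once with $\L_1$ as the $\pi$-stable category, together with the inclusion $\L_0^\heart \subseteq \L_1^\heart$, yields $\L_0 \subseteq \L_1$; applying it a second time with the roles reversed, using $\L_1^\heart \subseteq \L_0^\heart$, yields $\L_1 \subseteq \L_0$. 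Combining the two inclusions gives $\L_0 = \L_1$, which is exactly the desired injectivity.

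There is essentially no remaining obstacle at this stage: the substantive comparison between containment of subcategories and containment of their hearts was carried out in \cref{lm:pi-stable-are-the-biggest}, so injectivity is a formal consequence. The one point I would be careful about is that the lemma is stated \emph{asymmetrically} — it requires only the larger candidate to be $\pi$-stable — so the argument must invoke it twice, once for each inclusion, rather than appealing to a symmetric statement. It is precisely the $\pi$-stability of \emph{both} $\L_0$ and $\L_1$ that licenses these two applications.
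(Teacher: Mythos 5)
Your proposal is correct and follows the same route as the paper: both deduce injectivity by applying \cref{lm:pi-stable-are-the-biggest} twice, once in each direction, to two $\pi$-stable localizing subcategories with the same heart. The extra care you take about the asymmetric statement of the lemma and about well-definedness of the map is sound but matches what the paper already establishes in the surrounding text.
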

\begin{proof}
    Let $\L_0$ and $\L_1$ be $\pi$-stable localizing subcategories such that $\L_0^\heart \simeq \L_1^\heart$ as subcategories of $\C^\heart$. In particular, they are contained in each other, hence \cref{lm:pi-stable-are-the-biggest} implies that $\L_0 \subseteq \L_1$ and $\L_1 \subseteq \L_0$ as they are both $\pi$-stable. 
\end{proof}

It remains to show that the map is also surjective. 

\begin{theorem}[{\cref{thm:A}}]
    \label{thm:premain}
    Let $\C$ be a $t$-stable category. In this situation, the map 
    \[\pistable\overset{(-)^\heart}\to \weaklocalizing\] 
    is a bijection. 
\end{theorem}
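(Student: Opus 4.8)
Injectivity is already in hand from \cref{cor:pi-stable-heart-injective}, so the plan is to prove surjectivity: given a weak localizing subcategory $\T \subseteq \C^\heart$, I would exhibit a $\pi$-stable localizing subcategory mapping to it under $(-)^\heart$. The candidate is forced by the $\pi$-stability condition, namely
\[ \L := \{ X \in \C \mid \pi_n^\heart X \in \T \text{ for all } n \}. \]
The whole argument is then a sequence of closure checks verifying that $\L$ is a $\pi$-stable localizing subcategory with $\L^\heart = \T$.

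First I would show $\L$ is thick. Closure under shifts is immediate from $\pi_n^\heart(X[1]) \simeq \pi_{n-1}^\heart X$, and $0 \in \L$ since $\pi_n^\heart 0 \simeq 0 \in \T$. For the two-out-of-three property, take a fiber sequence $X \to Y \to Z$ and apply $\pi_\bullet^\heart$ to obtain a long exact sequence in $\C^\heart$. If two of $X, Y, Z$ lie in $\L$, then every homotopy object of the third occupies the central slot of a five-term exact sequence whose four outer terms are homotopy objects of the other two, hence lie in $\T$; the defining property of a weak Serre subcategory then places the central term in $\T$ as well. This is the crux of the proof, and the main thing to get right: the five-term form of the weak Serre axiom is exactly tailored to the long exact sequence of a fiber sequence, so it handles fibers, cofibers and extensions uniformly, with no separate kernel/cokernel bookkeeping. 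Closure under finite coproducts follows from the sequence $X \to X \oplus Y \to Y$, so $\L$ is thick.

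Next I would check closure under filtered colimits. Because the $t$-structure is compatible with filtered colimits, each $\pi_n^\heart$ commutes with them, so for a filtered diagram $\{X_i\}$ in $\L$ one has $\pi_n^\heart(\colim_i X_i) \simeq \colim_i \pi_n^\heart X_i$. A filtered colimit in the Grothendieck abelian category $\C^\heart$ is a cokernel of a map between coproducts, and $\T$ --- being a weak Serre subcategory closed under coproducts --- is closed under both coproducts and cokernels, hence under filtered colimits. Thus $\colim_i X_i \in \L$ and $\L$ is a stable localizing subcategory.

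It remains to identify the heart. Repeating the computation of \cref{prop:pi-stable-then-t-stable} shows $\L$ is $t$-stable: for $X \in \L$ we have $\pi_n^\heart \tau\geqz X \simeq \pi_n^\heart X$ for $n \geq 0$ and $\simeq 0$ otherwise, so $\tau\geqz X \in \L$, and dually $\tau\leqz X \in \L$. Then \cref{rm:t-stable-truncation-homotopy-the-same-functors} gives $\L^\heart = \C^\heart \cap \L$; for $X \in \C^\heart$ the only possibly nonzero homotopy object is $\pi_0^\heart X \simeq X$, so $X \in \L$ iff $X \in \T$, that is $\L^\heart = \T$. Finally, $\pi$-stability of $\L$ is immediate, since $\pi_n^\heart X \in \L^\heart = \T$ for all $n$ is literally the membership condition defining $\L$. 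Combined with \cref{cor:pi-stable-heart-injective}, this shows $(-)^\heart$ is a bijection. I expect the filtered-colimit step to be routine and the thickness step via the weak Serre five-term sequence to be the genuine heart of the matter.
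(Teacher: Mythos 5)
Your proposal is correct and follows essentially the same route as the paper: the same candidate $\L = \{X \mid \pi_n^\heart X \in \T \text{ for all } n\}$, the same use of the five-term weak Serre axiom on the long exact sequence of heart-valued homotopy groups, and the same appeal to $\pi_n^\heart$ preserving coproducts. Your explicit verification that $\L^\heart = \T$ (via $t$-stability and $\pi_0^\heart X \simeq X$ for $X \in \C^\heart$) is a detail the paper leaves implicit, and is a welcome addition rather than a divergence.
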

\begin{proof}
    We know by \cref{cor:pi-stable-heart-injective} that the map is injective, hence it remains to prove surjectivity. To do this we follow the proof of \cite[C.5.2.7]{lurie_SAG}, adapted to the stable setting. 
    
    Let $\A$ be a weak localizing subcategory of $\C^\heart$. Define $\L\subseteq \C$ to be the full subcategory spanned by objects $X$ such that $\pi_n^\heart X\in \A$. We prove that it is a stable localizing subcategory --- it will obviously be $\pi$-stable by definition. In particular we prove that it is closed under cofiber sequences, desuspension and colimits. 

    Let $A\rightarrow B\rightarrow C$ be a cofiber sequence in $\C$. We need to show that if two of the objects $A, B, C$ is in $\L$, then also the last one is. The long exact sequence of heart-valued homotopy groups has the form 
    \[\cdots\rightarrow \pi^{\heart}_{n-1}B \rightarrow \pi^{\heart}_{n-1}C \rightarrow \pi^{\heart}_{n}A \rightarrow \pi^{\heart}_{n}B \rightarrow \pi^{\heart}_{n}C \rightarrow \pi^{\heart}_{n+1}A\rightarrow \pi^{\heart}_{n+1}B \rightarrow \cdots \]
    Assuming that $A, B$ are in $\L$ we get by the definition of $\L$ that the four objects $\pi^{\heart}_{n}A, \pi^{\heart}_{n}B, \pi^{\heart}_{n+1}A, \pi^{\heart}_{n+1}B$ are in $\A$. Hence, as $\A$ is a weak Serre subcategory we have $\pi^\heart_n C\in \A$. This works for all $n$, hence we must have $C\in \L$ as well. The proof is identical in the case that $A, C$ or $B, C$ are in $\L$. 
    
    The full subcategory $\L$ is also closed under desuspension, as we have $\pi_n^\heart (\Omega X)\simeq \pi_{n+1}^\heart(X)$ by the long exact sequence in heart-valued homotopy groups. Hence $\L$ is a full stable subcategory of $\C$. In particular this means it is closed under finite colimits. Now, as $\pi_n^\heart$ preserves coproducts, and $\A$ is closed under coproducts, we also get that $\L$ is closed under coproducts. This implies that $\L$ is closed under colimits, which finishes the proof. 
\end{proof}

\begin{remark}
    It is somewhat unfortunate that the terminology does not align perfectly in these two situations --- meaning that we had to add a prefix ``weak'' for the abelian case. As both are inspired by the existence of localization functors, they are the natural terminology in their respective settings, and we should perhaps not expect everything to always agree perfectly. In \cref{thm:B} we will use the abelian localizing subcategories, and then again be left with a choice of a different prefix for the stable version. 
\end{remark}

\cref{thm:A} recovers, and generalizes, a theorem by Takahashi for commutative noetherian rings. Note that Takahashi does not refer to the abelian subcategories as weak localizing, but as thick subcategories closed under coproducts. 

\begin{corollary}[{\cite{takahashi_2009}}]
    \label{cor:takahashi-weak-localizing}
    If $R$ is a commutative noetherian ring, then there is a bijection between the set of $H$-stable localizing subcategories of $\Der(R)$ and the set of weak localizing subcategories in $\Mod_R$. 
\end{corollary}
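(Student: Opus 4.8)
The plan is to deduce this as the special case of \cref{thm:premain} obtained by taking $\C = \Der(R)$ with its standard $t$-structure. The first step is to confirm that $\Der(R)$ is a $t$-stable category in the sense required by \cref{thm:premain}, i.e. that its natural $t$-structure is right complete and compatible with filtered colimits. This is already recorded among the examples of $t$-stable categories above; the compatibility with filtered colimits holds because the homology functors $H_n$ commute with filtered colimits, so $\C\leqz$ is closed under them, and right completeness holds because every complex is recovered as $X \simeq \colim_n \tau_{\geq -n} X$, using that filtered colimits are exact on $\Mod_R$.

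The second step is to translate the two sides of \cref{thm:premain} into Takahashi's language. On the abelian side, the heart is $\C^\heart = \Mod_R$, and a weak localizing subcategory of $\Mod_R$ is by definition a weak Serre subcategory closed under coproducts, which is exactly what Takahashi calls a thick subcategory closed under coproducts (recall that weak Serre subcategories are the \emph{thick} subcategories of the homological algebra literature). On the stable side, for $\C = \Der(R)$ the heart-valued homotopy groups are the ordinary homology functors, $\pi_n^\heart \simeq H_n$, so the condition defining a $\pi$-stable localizing subcategory (\cref{def:pi-stable-localizing-subcategory}) --- that $X \in \L$ if and only if $\pi_n^\heart X \in \L^\heart$ for all $n$ --- becomes precisely Takahashi's $H$-stability condition, while the stable localizing subcategories (thick and closed under filtered colimits, hence under all coproducts) coincide with Takahashi's localizing subcategories of $\Der(R)$.

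With these identifications in place, the final step is simply to invoke \cref{thm:premain}, which yields a bijection between $\pi$-stable (that is, $H$-stable) localizing subcategories of $\Der(R)$ and weak localizing subcategories of $\Mod_R$, given by $\L \mapsto \L^\heart$. I do not expect a genuine mathematical obstacle here, since the statement is a direct specialization; the only point requiring care is the bookkeeping of conventions --- in particular reconciling Takahashi's cohomological indexing and his ``thick subcategory'' terminology with the homological indexing and ``weak localizing'' terminology used here --- but this amounts to the dictionary assembled in the second step rather than any new argument.
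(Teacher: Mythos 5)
Your proposal is correct and matches the paper's (implicit) argument exactly: the paper derives this corollary by specializing \cref{thm:premain} to $\C = \Der(R)$, using that $\Der(R)$ is listed among the examples of $t$-stable categories, that $\pi_n^\heart = H_n$ so $\pi$-stability coincides with Takahashi's $H$-stability, and that Takahashi's ``thick subcategories closed under coproducts'' of $\Mod_R$ are the weak localizing subcategories. The dictionary you assemble in your second step is precisely the content the paper records in the remark following \cref{def:pi-stable-localizing-subcategory} and in the sentence preceding the corollary.
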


A theorem of Krause --- see \cite[3.1]{krause_2008} --- shows that these two collections are also in bijection with certain subsets of $\Spec R$, which Krause calls the \emph{coherent subsets}. In light of \cref{thm:A} we can generalize Takahashi's result to a noetherian scheme $X$, and we conjecture that these are also in bijection with the coherent subsets of $X$ --- generalizing the result by Krause.  

\begin{corollary}
    \label{cor:noetherian-scheme-weak-localizing}
    If $X$ noetherian scheme, then there is a bijection between the set of stable localizing subcategories of $\Der_{qc}(X)$ closed under homology, and the set of weak localizing subcategories in $\QCoh(X)$. 
\end{corollary}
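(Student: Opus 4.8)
The plan is to deduce this directly from \cref{thm:premain}, exactly as \cref{cor:takahashi-weak-localizing} does in the affine case; the content of the corollary is really just the verification that $\Der_{qc}(X)$ satisfies the running hypotheses and a matching-up of terminology. First I would recall that for a noetherian scheme $X$ the category $\QCoh(X)$ is a Grothendieck abelian category, and that the natural comparison $\Der(\QCoh(X))\to \Der_{qc}(X)$ is an equivalence, so that $\Der_{qc}(X)$ is identified with the derived category of the Grothendieck abelian category $\QCoh(X)$. Its connective part $\Der_{qc}(X)^{\geq 0}$ is then Grothendieck prestable, so by \cref{rm:grothendieck-iff-right-complete-and-colims} the standard $t$-structure is right complete and compatible with filtered colimits, i.e. $\Der_{qc}(X)$ is a $t$-stable category. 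Its heart is $\Der_{qc}(X)^\heart \simeq \QCoh(X)$, and the heart-valued homotopy group functors $\pi_n^\heart$ coincide with the homology sheaf functors $H_n$.

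Next I would match the two descriptions of subcategories. Under the identification $\pi_n^\heart = H_n$, the defining condition of a $\pi$-stable localizing subcategory (\cref{def:pi-stable-localizing-subcategory}) — that $X$ lies in $\L$ if and only if every $\pi_n^\heart X$ lies in $\L^\heart$ — says precisely that membership in $\L$ can be detected on the homology sheaves $H_n X$. This is exactly what "closed under homology" means in the sense of Takahashi's $H$-stability, cf. the remark following \cref{def:pi-stable-localizing-subcategory}. Hence the stable localizing subcategories of $\Der_{qc}(X)$ closed under homology are precisely the $\pi$-stable localizing subcategories of $\Der_{qc}(X)$.

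With these identifications in hand, \cref{thm:premain} applied to $\C = \Der_{qc}(X)$ yields a bijection $\L \mapsto \L^\heart$ between the $\pi$-stable localizing subcategories of $\Der_{qc}(X)$ and the weak localizing subcategories of $\QCoh(X)$, which is the asserted correspondence.

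The main obstacle is not the formal invocation of \cref{thm:premain} but the verification that $\Der_{qc}(X)$ genuinely fits the framework. The delicate point is the equivalence $\Der_{qc}(X)\simeq \Der(\QCoh(X))$: this is what guarantees that the standard $t$-structure on $\Der_{qc}(X)$ has heart $\QCoh(X)$ and arises from a Grothendieck abelian category, and hence is right complete and compatible with filtered colimits. For general quasi-compact quasi-separated $X$ this comparison can fail, which is exactly why the noetherian hypothesis is imposed; in the noetherian case it is a known result, and once it is available the corollary follows immediately.
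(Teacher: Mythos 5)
Your proposal is correct and follows essentially the same route as the paper, which treats this corollary as an immediate specialization of \cref{thm:premain} once $\Der_{qc}(X)$ is recognized as a $t$-stable category with heart $\QCoh(X)$ and ``closed under homology'' is identified with $\pi$-stability. Your additional care in verifying the hypotheses (via $\Der_{qc}(X)\simeq \Der(\QCoh(X))$ for noetherian $X$) is a reasonable way to justify the $t$-stability the paper simply asserts in its list of examples.
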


\begin{conjecture}
    \label{conj:coherent-noetherian-scheme}
    For a noetherian scheme $X$, there is a bijection between the collection of coherent subsets of $X$ and weak localizing subcategories of $\QCoh(X)$. 
\end{conjecture}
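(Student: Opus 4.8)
The affine case of this statement is precisely Krause's theorem \cite[3.1]{krause_2008}: for $X=\Spec R$ the coherent subsets of $\Spec R$ are in bijection with the weak localizing subcategories of $\Mod_R$. The plan is therefore to promote this affine statement to a global one by a local-to-global argument, using that a noetherian scheme is quasi-compact and hence covered by finitely many affine opens. First I would fix the definition of a coherent subset of $X$: a subset $U\subseteq X$ is \emph{coherent} if for every affine open $\Spec R\hookrightarrow X$ the intersection $U\cap\Spec R$ is coherent in Krause's sense. Before this is usable, one must check that it is genuinely Zariski-local --- that it suffices to test on the opens of a single cover, and that the notion is compatible with passing to a principal open $\Spec R_f$. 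This compatibility should follow from the naturality of Krause's correspondence under the flat localization $R\to R_f$, whose induced restriction $\Mod_R\to\Mod_{R_f}$ is exact and cocontinuous and so preserves weak localizing subcategories.

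The candidate bijection is the support assignment underlying Krause's invariant. To a weak localizing subcategory $\T\subseteq\QCoh(X)$ I would associate $\mathrm{supp}(\T)=\bigcup_{M\in\T}\{x\in X:M_x\neq 0\}$, and conversely to a coherent subset $U$ the subcategory $\T_U=\{M\in\QCoh(X):\mathrm{supp}(M)\subseteq U\}$. Choosing a finite affine cover $X=\bigcup_i\Spec R_i$, each open immersion $j_i$ gives an exact, coproduct-preserving restriction $j_i^{*}\colon\QCoh(X)\to\Mod_{R_i}$, so $\T$ restricts to a weak localizing subcategory $\T_i\subseteq\Mod_{R_i}$ with $\mathrm{supp}(\T_i)=\mathrm{supp}(\T)\cap\Spec R_i$. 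Applying Krause on each chart identifies $\T_i$ with the coherent subset $\mathrm{supp}(\T)\cap\Spec R_i$, which exhibits $\mathrm{supp}(\T)$ as coherent and shows that $\T$ is determined chart-by-chart by its support. The remaining content of the theorem is then the claim that membership in $\T$ is a local condition: $M\in\T$ if and only if $j_i^{*}M\in\T_i$ for all $i$.

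The main obstacle is exactly this descent step. The inclusion $\T\subseteq\{M:j_i^{*}M\in\T_i\ \forall i\}$ is immediate from exactness of $j_i^{*}$, but the reverse --- reconstructing a global sheaf in $\T$ from the fact that all its restrictions lie in the $\T_i$ --- cannot be run through a torsion theory as in Gabriel's classification of Serre subcategories, since a weak localizing subcategory need not be closed under sub-objects or quotients. I would instead try to build $M$ from its restrictions using a \v{C}ech / Mayer--Vietoris presentation: for a two-chart cover $X=U\cup V$ with overlap $W=U\cap V$ one has the exact Mayer--Vietoris sequence exhibiting $M$ as the kernel of $j_{U*}j_U^{*}M\oplus j_{V*}j_V^{*}M\to j_{W*}j_W^{*}M$, and the goal is to show these pushforwards lie in $\T$ and then invoke closure of the weak Serre subcategory $\T$ under kernels, cokernels and extensions to deduce $M\in\T$. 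The difficulty is that the pushforward $j_*$ along an open immersion is only left exact, so controlling its higher direct images and their supports (via the base-change identity $(j_{U*}N)|_V\simeq j_{W*}(N|_W)$) is where the noetherian and coherence hypotheses must do the work; making this precise, and then inducting on the number of charts over the pairwise overlaps $\Spec R_i\cap\Spec R_j$, is the crux of a complete proof. Once locality of membership is established, injectivity and surjectivity of the support assignment both reduce immediately to the affine statement, and combining with \cref{cor:noetherian-scheme-weak-localizing} yields the conjectured three-way correspondence with the $\pi$-stable localizing subcategories of $\Der_{qc}(X)$.
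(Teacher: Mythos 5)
This statement is stated as a \emph{conjecture} in the paper: the author offers no proof, only the remark that Gabriel's theorem supplies evidence by establishing the restricted bijection between specialization-closed subsets of $X$ and (non-weak) localizing subcategories of $\QCoh(X)$. So there is no proof in the paper to compare yours against, and your proposal should be judged as an attempt on an open problem. As such it is a sensible strategy --- reduce to Krause's affine theorem by a local-to-global argument over a finite affine cover --- but it is not a proof, and you say so yourself: the descent step, that membership of $M$ in a weak localizing subcategory $\T$ can be tested on the restrictions $j_i^{*}M$, is exactly the hard point, and your Mayer--Vietoris plan for it is circular as stated. To conclude $M\in\T$ from the exact sequence $0\to M\to j_{U*}j_U^{*}M\oplus j_{V*}j_V^{*}M\to j_{W*}j_W^{*}M$ you need the pushforwards $j_{*}j^{*}M$ to lie in $\T$, and nothing in the definition of a weak localizing subcategory gives you closure under $j_{*}$; establishing that is essentially equivalent to the locality you are trying to prove.

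There is also a concrete error in the candidate bijection. The assignment $U\mapsto\T_U=\{M:\mathrm{supp}(M)\subseteq U\}$ always produces a \emph{Serre} subcategory, since the support of any subquotient of $M$ is contained in $\mathrm{supp}(M)$; hence $\T_U$ is a localizing subcategory in the strong sense and, by Gabriel, corresponds to a specialization-closed subset. It therefore cannot be inverse to the support map on the full collection of weak localizing subcategories, which contains subcategories that are not closed under subobjects. Already for $X=\Spec\mathbb{Z}$, the weak localizing subcategory $\T$ of abelian groups on which $p$ acts invertibly has $\mathrm{supp}(\T)=\Spec\mathbb{Z}\setminus\{(p)\}$, while $\T_{\mathrm{supp}(\T)}$ is the category of prime-to-$p$ torsion groups, which is a different subcategory. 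Whatever invariant Krause's correspondence actually uses, your inverse formula is not it, so even the affine input to your argument is set up incorrectly. Both issues --- the wrong inverse and the unproved descent --- would need to be repaired before this could count as progress on the conjecture.
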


\begin{remark}
    A hint towards the truth of this conjecture comes from a theorem by Gabriel (\cite[VI.2.4(b)]{gabriel_1962}), where he shows that the above proposed bijection restricts to a bijection between specialization closed subsets of $X$ and localizing subcategories of $\QCoh(X)$. 
\end{remark}

Now, we want to mention that we also obtain a classification of weak Serre subcategories of $\C$. This is done by recognizing that the proofs of \cref{lm:t-stable-then-weak-localizing-heart}, \cref{cor:pi-stable-heart-injective} and \cref{thm:A} also holds without the assumption about coproducts. The proofs treats coproducts as a separate part, hence just omitting it from the proofs gives the following result. 

\begin{proposition}
    \label{prop:classification-weak-serre}
    Let $\C$ be a $t$-stable category. In this situation, the map 
    \[\pistablethick\overset{(-)^\heart}\to \weakserre\] 
    is a bijection. 
\end{proposition}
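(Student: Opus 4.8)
The plan is to run the three ingredients behind \cref{thm:premain} --- namely \cref{lm:t-stable-then-weak-localizing-heart} (well-definedness of the map), \cref{cor:pi-stable-heart-injective} (injectivity), and the surjectivity argument inside the proof of \cref{thm:premain} --- but now for thick subcategories in place of localizing ones. The guiding observation is that in each of these proofs the use of arbitrary coproducts is confined to a single, cleanly separated step: the closure of $\L^\heart$ under coproducts in \cref{lm:t-stable-then-weak-localizing-heart}, and the closure of the constructed $\L$ under coproducts in the final paragraph of the proof of \cref{thm:premain}. Every other step uses only finite (co)limits, the long exact sequence in $\pi_n^\heart$, and the $\pi$-stability hypothesis. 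So I would re-examine each step and verify that it survives verbatim once the coproduct clause is struck, replacing ``weak localizing'' by ``weak Serre'' throughout.

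First I would set up the thick analog of the intermediate notion, namely a thick subcategory $\L\subseteq\C$ closed under the truncations $\tau\geqz$ and $\tau\leqz$. The proof of \cref{prop:pi-stable-then-t-stable} that a $\pi$-stable $\L$ has this property uses only the identities $\pi_n^\heart\tau\geqz X\simeq\pi_n^\heart X$ for $n\geq 0$ (and $0$ otherwise) together with the $\pi$-stable criterion, so it applies unchanged. For such an $\L$ the induced sub-$t$-structure of \cref{rm:t-stable-truncation-homotopy-the-same-functors} makes the inclusion $\L\hookrightarrow\C$ $t$-exact, whence by the first paragraph of \cref{lm:t-stable-then-weak-localizing-heart} the functor $\L^\heart\to\C^\heart$ is exact, fully faithful, and has image closed under extensions; that is, $\L^\heart$ is closed under kernels, cokernels and extensions, which is exactly a weak Serre subcategory. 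I would stop there, omitting the now-unneeded coproduct step, thereby showing that $(-)^\heart$ lands in $\weakserre$.

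For injectivity I would transcribe \cref{lm:pi-stable-are-the-biggest}: its proof never invokes coproducts, relying only on the coincidence of the truncation and $\pi_n^\heart$ functors (\cref{rm:t-stable-truncation-homotopy-the-same-functors}) and the $\pi$-stable criterion. Hence for truncation-closed thick $\L_0,\L_1$ with $\L_1$ $\pi$-stable, one gets $\L_0\subseteq\L_1$ iff $\L_0^\heart\subseteq\L_1^\heart$, and injectivity follows exactly as in \cref{cor:pi-stable-heart-injective}. For surjectivity, given a weak Serre subcategory $\A\subseteq\C^\heart$ I would define $\L$ to be the full subcategory on those $X$ with $\pi_n^\heart X\in\A$ for all $n$. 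Closure under cofiber sequences is the five-term exact sequence argument of \cref{thm:premain}, which uses precisely that $\A$ is a weak Serre subcategory; closure under desuspension uses $\pi_n^\heart(\Omega X)\simeq\pi_{n+1}^\heart X$. Together these make $\L$ stable and closed under finite colimits, hence thick, and it is $\pi$-stable with $\L^\heart=\A$ by construction --- and here I simply do not perform the final coproduct-closure paragraph.

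The main thing to watch, rather than a deep obstacle, is the bookkeeping around unbounded objects in the thick setting: I must confirm that the notion of a truncation-closed thick subcategory is sensible when objects carry homotopy in infinitely many degrees, and that the sub-$t$-structure of \cref{rm:t-stable-truncation-homotopy-the-same-functors} genuinely applies to a merely thick (non-cocomplete) subcategory, so that its truncations and $\pi_n^\heart$ really do agree with those of $\C$. Once that point is settled, the argument is a faithful coproduct-free copy of the localizing case, and no new input is required beyond substituting the defining property of a weak Serre subcategory for that of a weak localizing one.
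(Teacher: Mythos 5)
Your proposal is correct and follows exactly the route the paper takes: the paper's own justification for \cref{prop:classification-weak-serre} is precisely the observation that the proofs of \cref{lm:t-stable-then-weak-localizing-heart}, \cref{cor:pi-stable-heart-injective} and \cref{thm:premain} treat coproducts as a cleanly separated step, so omitting that step yields the thick/weak-Serre statement. Your additional care about the sub-$t$-structure of \cref{rm:t-stable-truncation-homotopy-the-same-functors} applying to a merely thick, truncation-closed subcategory is a sensible check (it goes back to \cite[1.3.19]{beilinson-bernstein-deligne_1983}, which needs no cocompleteness), and if anything your write-up is more explicit than the paper's.
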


This recovers the following classification of weak Serre subcategories in the case where the $t$-structure on $\C$ is bounded, due to Zhang--Cai, see \cite{zhang-cai_2017}. 

\begin{corollary}
    \label{cor:classification-weak-serre-bounded}
    Let $\C$ be a triangulated category with a bounded $t$-structure. In this situation there is a bijection between $\pi$-stable subcategories of $\C$ and weak Serre subcategories of $\C^\heart$.  
\end{corollary}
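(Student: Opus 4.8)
The plan is to deduce the statement from \cref{prop:classification-weak-serre} by verifying that the proof underlying it survives the passage to a bounded, not-necessarily-presentable, setting. First I would fix a stable $\infty$-categorical enhancement $\C_0$ of the given triangulated category, so that the bounded $t$-structure corresponds to one on $\C_0$ with the same heart $\A = \C_0^\heart$; the constructions that matter — the truncations $\tau\geqz, \tau\leqz$, the homotopy objects $\pi_n^\heart$, and the long exact sequence attached to a cofiber sequence — then all take place inside the triangulated category itself. The key preliminary observation is that a bounded $t$-structure is automatically right complete: every object lies in $\C_{\geq -n}$ for some $n$, so the defining colimit recovers the whole category. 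Thus the only hypotheses of a $t$-stable category that $\C_0$ may fail are presentability and compatibility with filtered colimits.

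Next I would isolate exactly where those two hypotheses enter the proofs of \cref{lm:t-stable-then-weak-localizing-heart}, \cref{cor:pi-stable-heart-injective} and \cref{thm:premain}. As flagged in the remark preceding \cref{prop:classification-weak-serre}, they are used only to establish closure under coproducts; every other step relies solely on exactness and full faithfulness of the induced heart functor for a $t$-exact inclusion (\cite{antieau-gepner-heller_2019}), on the long exact sequence in $\pi_n^\heart$, and on the weak Serre property. Dropping the coproduct steps, these arguments are valid in any stable $\infty$-category with a $t$-structure, so I would re-run them for $\C_0$: the forward map sends a $\pi$-stable thick subcategory $\L$ to $\L^\heart = \L \cap \A$, which is a weak Serre subcategory, and the backward map sends a weak Serre subcategory $\A'$ to the full subcategory of objects $X$ with $\pi_n^\heart X \in \A'$ for all $n$, which the cofiber-sequence and desuspension arguments of \cref{thm:premain} show to be thick and manifestly $\pi$-stable.

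For the bijection itself, injectivity follows from the bounded analog of \cref{lm:pi-stable-are-the-biggest}: if $\L_0^\heart \subseteq \L_1^\heart$ and $\L_1$ is $\pi$-stable, then every $X \in \L_0$ has $\pi_k^\heart X \in \L_0^\heart \subseteq \L_1^\heart$, forcing $X \in \L_1$. Here boundedness is a genuine convenience rather than an obstruction, since for each $X$ only finitely many $\pi_n^\heart X$ are nonzero, so the condition ``$\pi_n^\heart X \in \A'$ for all $n$'' is a finite one and no completeness beyond the automatic right (and left) completeness is invoked. Surjectivity is exactly the statement that the backward map lands in $\pi$-stable thick subcategories and is inverse to the forward map, which the re-run of \cref{thm:premain} provides.

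The main obstacle I anticipate is bookkeeping rather than conceptual: one must confirm that the cited structural facts about the heart functor — that for a $t$-exact fully faithful inclusion the induced functor on hearts is exact, fully faithful and closed under extensions (\cite{antieau-gepner-heller_2019}) — do not covertly use presentability, which they do not, being formal consequences of $t$-exactness. A cleaner but heavier alternative, if one prefers to stay literally within the hypotheses of \cref{thm:A}, is to pass to the compactly generated category $\mathrm{Ind}(\C_0)$, which is $t$-stable with heart $\mathrm{Ind}(\A)$, apply \cref{thm:A} there, and then restrict the resulting bijection along compact objects, matching weak localizing subcategories of $\mathrm{Ind}(\A)$ with weak Serre subcategories of $\A$ and $\pi$-stable localizing subcategories of $\mathrm{Ind}(\C_0)$ with $\pi$-stable thick subcategories of $\C_0$; the cost there is justifying these two finite-type restrictions.
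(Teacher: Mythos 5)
Your proposal is correct and follows essentially the same route as the paper: the paper obtains this corollary by specializing \cref{prop:classification-weak-serre}, which is itself the coproduct-free version of \cref{lm:t-stable-then-weak-localizing-heart}, \cref{cor:pi-stable-heart-injective} and \cref{thm:premain}, exactly the reduction you carry out (your observations that boundedness gives right completeness for free and that presentability enters only through the coproduct closure are the correct, if implicit, justifications). The Ind-completion alternative you sketch is not needed and is not what the paper does.
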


We can summarize the contents of this section with half of the diagram from the introduction. 

\begin{center}
    \begin{tikzcd}
        \tcompatible 
        \arrow[rrd, "(-)^\heart"]
        &&\\
        \pistable 
        \arrow[rr, "\simeq"] 
        \arrow[u, hook, "\subseteq", swap] 
        && 
        \weaklocalizing                          
        \end{tikzcd}
\end{center}

\subsection*{Digression on Grothendieck homology theories}

There is a slight generalization of the surjectivity result above, which we decided to include here for future reference. The generalization comes from realizing that there are other functors that have similar properties to the heart valued homotopy group functor $\pi_*^\heart \: \C\to \C^\heart$. 

Let $\C$ be a presentable stable $\infty$-category and $\A$ be a graded Grothendieck abelian category --- meaning it comes equipped with an autoequivalence $[1]\: \A\to \A$, which we think of as a grading shift functor. 

\begin{definition}
    A functor $H\: \C\to \A$ is called a \emph{Grothendieck homology theory} if it satisfies the following properties:
    \begin{enumerate}
        \item It is additive.
        \item It sends cofiber sequences $X\rightarrow Y \rightarrow Z$ to exact sequences $HX\rightarrow HY\rightarrow HZ$.
        \item It is a graded functor, i.e. $H(\Sigma X) \cong (HX)[1]$.
        \item It preserves coproducts. 
    \end{enumerate}
\end{definition}

\begin{remark}
    The first two criteria defines $H$ to be what is usually called a homological functor. Adding the third criteria makes $H$ a homology theory, and the last is what makes it Grothendieck. 
\end{remark}

The main example of these come from the category of spectra, $\Sp$, where the associated homology theory to any spectrum is a Grothendieck homology theory.

\begin{example}
    Let $\C=\Sp$ and $R$ be a graded commutative ring. The Eilenberg--MacLane spectrum $HR$ is a commutative ring spectrum, and the associated homology theory $HR_* := [\S, HR\otimes (-)]_*\: \Sp \to \Mod_R$ is a Grothendieck homology theory. This homology theory is equivalent to singular homology with $R$ coefficients. 
\end{example}

The above example holds more generally as well. 

\begin{example}
    If $\C$ is monoidal and the unit $\1$ is compact, then for any $H\in \C$ the associated functor
    \begin{align*}
        H_* \: \C &\to \Ab\gr \\
        X &\longmapsto [\1, H\otimes X]_*
    \end{align*}
    is a Grothendieck homology theory. 
\end{example}

\begin{proposition}
    Let $H\: \C\to \A$ be a Grothendieck homology theory and $\T$ a weak localizing subcategory of $\A$. In this situation, the full subcategory $\L\subseteq \C$ consisting of objects $X$ such that $HX\in \T$, is a localizing subcategory of $\C$. 
\end{proposition}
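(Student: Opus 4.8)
The plan is to repeat, essentially verbatim, the argument that proves \cref{thm:premain}, with the heart-valued homotopy functors $\pi_n^\heart$ replaced by the Grothendieck homology theory $H$ together with its grading shifts $(H-)[n]$. One point deserves emphasis at the outset: we must read a weak localizing subcategory $\T$ of the \emph{graded} abelian category $\A$ as being closed under the grading shift $[1]$ (equivalently, under $[n]$ for all $n\in\mathbb{Z}$). This is the correct graded analog of the condition ``$\pi_n^\heart X\in\A$ for all $n$'' appearing in \cref{thm:premain}, and it is precisely what is needed for $\L$ to be closed under (de)suspension. With this understood, I would verify in turn that $\L$ is a stable subcategory, that it is thick, and that it is closed under coproducts; stability together with the latter two then yields closure under all small colimits, so that $\L$ is localizing.

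First I would produce a long exact sequence in $\A$. Given a cofiber sequence $A\to B\to C$ in $\C$, rotate it to the doubly infinite chain of cofiber sequences $\cdots\to\Omega C\to A\to B\to C\to\Sigma A\to\cdots$. Applying $H$ to each consecutive triple, and using that $H$ carries cofiber sequences to exact sequences together with the gradedness $H(\Sigma X)\cong (HX)[1]$, these three-term exact sequences patch into a long exact sequence
\[\cdots\to (HC)[-1]\to HA\to HB\to HC\to (HA)[1]\to (HB)[1]\to\cdots\]
in $\A$. To see that $\L$ is closed under cofibers, suppose $A,B\in\L$, so $HA,HB\in\T$ and hence also $(HA)[1],(HB)[1]\in\T$ by shift-closure. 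The five-term exact segment $HA\to HB\to HC\to (HA)[1]\to (HB)[1]$ has all four of its outer terms in $\T$, so the weak Serre condition forces $HC\in\T$, i.e. $C\in\L$; the other two cases of the two-out-of-three property follow by reading off the analogous five-term segments of the same long exact sequence. Closure of $\L$ under (de)suspension is immediate from $H(\Sigma^{\pm1}X)\cong (HX)[\pm1]$ and shift-closure of $\T$, and since $H$ is additive we have $H(0)\cong 0\in\T$, so $0\in\L$. Thus $\L$ is a stable subcategory, and being stable and closed under cofibers it is closed under all finite colimits, i.e. thick.

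Finally, since $H$ preserves coproducts and $\T$ is closed under coproducts, for any family $\{X_i\}$ in $\L$ we have $H(\coprod_i X_i)\cong\coprod_i HX_i\in\T$, whence $\coprod_i X_i\in\L$. A thick subcategory of a presentable stable category that is closed under coproducts is automatically closed under all small colimits, since in a stable category every small colimit is built out of coproducts and cofibers; in particular $\L$ is closed under filtered colimits. This is exactly the step used to conclude the proof of \cref{thm:premain}. Hence $\L$ is thick and closed under filtered colimits, so it is a stable localizing subcategory. The only genuine work beyond transcribing the proof of \cref{thm:premain} is the construction of the long exact sequence from the three-term exactness and gradedness axioms --- as $H$ is not supplied with a prefabricated long exact sequence --- together with the bookkeeping that interprets $\T$ as a graded subcategory, so that all the shifted terms arising in the argument genuinely lie in $\T$.
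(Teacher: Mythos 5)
Your proof is correct and follows essentially the same route as the paper, whose entire proof is the one-line remark that one runs the surjectivity argument of \cref{thm:premain} with $\pi_n^\heart(-)$ replaced by $H(-)[n]$. You simply fill in the details the paper leaves implicit --- assembling the long exact sequence from the three-term exactness and gradedness axioms, and noting that $\T$ must be read as closed under the grading shift for closure of $\L$ under (de)suspension --- both of which are reasonable and consistent with the paper's intent.
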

\begin{proof}
    This holds by using the same surjectivity argument from \cref{thm:premain}, just exchanging $\pi_n^\heart(-)$ with $H(-)[n]$. 
\end{proof}

This gives a commutative diagram
\begin{center}
    \begin{tikzcd}
        \C \arrow[r, "H"]           & \A           \\
        \L \arrow[r, "H"] \arrow[u] & \T \arrow[u]
    \end{tikzcd}    
\end{center}
where both of the fully faithful vertical functors have right adjoints. Note that the adjoint diagram might not commute. 

\begin{remark}
    In addition to being a localizing subcategory, we have by definition that we can check containment of $\L$ on the associated Grothendieck abelian category $\T$. This means that $\L$ also has a certain $\pi$-stability property, which one might call being $H$-stable, generalizing both \cref{def:pi-stable-localizing-subcategory} and Takahashi's notion of $H$-stability. 
\end{remark}

\subsection{Classification of localizing subcategories}
\label{ssec:classificartion-localizing}

The goal of this section is to prove \cref{thm:B}, and that it interacts well with both Lurie's classification via prestable categories, and \cref{thm:A}. As in \cref{ssec:classificartion-weak-localizing} we start by proving that the wanted map of sets exists.

\begin{lemma}
    \label{lm:pi-exact-then-abelian-localizing-heart}
    Let $\C$ be a $t$-stable category. If $\L$ is a $t$-exact localizing subcategory, then $\L^\heart$ is an abelian localizing subcategory of $\C^\heart$. 
\end{lemma}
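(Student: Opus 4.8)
The plan is to leverage the weak case already proven and then upgrade it using $t$-exactness. First I would observe that since $\L$ is $t$-exact it is in particular $t$-stable (\cref{rm:recurring-1}), so \cref{lm:t-stable-then-weak-localizing-heart} applies and tells us that $\L^\heart$ is a weak localizing subcategory of $\C^\heart$; that is, it is a weak Serre subcategory closed under coproducts, and in particular it is already closed under extensions and coproducts. The only gap between a weak Serre subcategory and a Serre subcategory is closure under sub-objects and quotients, so the entire content of the lemma reduces to the following claim: for every short exact sequence $0\to A\to B\to C\to 0$ in $\C^\heart$ with $B\in \L^\heart$, both $A$ and $C$ lie in $\L^\heart$.

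To prove this claim I would use the definition of $t$-exactness directly. Write $\L = \Ker(L)$ for a $t$-exact localization $L\:\C\to \D$. The short exact sequence in $\C^\heart$ is the same data as a cofiber sequence $A\to B\to C$ in $\C$ with all three terms in the heart. Since $L$ is exact it preserves this cofiber sequence, and since $L$ is $t$-exact we have $LA, LB, LC \in \D^\heart$; hence the cofiber sequence $LA\to LB\to LC$ is itself a short exact sequence $0\to LA\to LB\to LC\to 0$ in the abelian category $\D^\heart$, the long exact sequence of heart-valued homotopy groups collapsing because all terms are concentrated in degree $0$. As $B\in \L = \Ker(L)$ we have $LB\simeq 0$, and exactness in $\D^\heart$ then forces $LA\simeq 0$ and $LC\simeq 0$. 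Thus $A, C\in \Ker(L) = \L$, and since they already lie in $\C^\heart$ we conclude $A, C\in \L\cap\C^\heart = \L^\heart$.

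Combining the two steps, $\L^\heart$ is a Serre subcategory of $\C^\heart$ closed under coproducts, i.e. an abelian localizing subcategory, as desired. I expect the only real subtlety to be the upgrade carried out in the second paragraph: the passage from ``weak'' to ``non-weak'' is precisely the closure under arbitrary sub-objects and quotients, and this is where $t$-exactness of the localization --- rather than mere $t$-stability --- is essential, since it is what guarantees that $LA$ and $LC$ remain in the heart, so that the vanishing of $LB$ propagates to them. An alternative packaging of the same argument would be to note that $L$ restricts to an exact localization $L^\heart\:\C^\heart\to\D^\heart$ with $\Ker(L^\heart) = \L^\heart$ and then invoke \cref{prop:abelian-localizing-iff-kernel-of-localization}; I would mention this as a cross-check but prefer the hands-on argument above, which avoids having to separately verify that $\D^\heart$ is Grothendieck abelian.
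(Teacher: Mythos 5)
Your proof is correct, but it is packaged differently from the paper's. The paper's proof is the ``alternative'' you mention in passing: it observes that the $t$-exact localization $L\:\C\to\D$ and its right adjoint induce an adjunction on hearts, that $L^\heart$ is an exact localization with kernel $\L^\heart$, and then cites \cref{prop:abelian-localizing-iff-kernel-of-localization} to conclude. Your route instead splits the statement into the weak part (already handled by \cref{rm:recurring-1} and \cref{lm:t-stable-then-weak-localizing-heart}) plus a direct verification of the two missing Serre axioms, and that verification is sound: for $0\to A\to B\to C\to 0$ in $\C^\heart$ with $B\in\L^\heart$, $t$-exactness keeps $LA, LB, LC$ in $\D^\heart$, the cofiber sequence collapses to a short exact sequence there, and $LB\simeq 0$ forces $LA\simeq LC\simeq 0$, whence $A, C\in\L\cap\C^\heart=\L^\heart$ (using that $\L^\heart=\L\cap\C^\heart$ for the induced $t$-structure, \cref{rm:t-stable-truncation-homotopy-the-same-functors}). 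What each approach buys: the paper's is shorter and produces the stronger conclusion that $\L^\heart$ is literally the kernel of an exact localization of abelian categories (which is what \cref{prop:abelian-localizing-iff-kernel-of-localization} trades against the Serre condition), at the cost of implicitly knowing that $\D^\heart$ is Grothendieck abelian and that $L^\heart$ is genuinely a localization; your hands-on argument sidesteps exactly those points, as you note, and isolates precisely where $t$-exactness (as opposed to mere $t$-stability) enters --- namely in keeping $LA$ and $LC$ in the heart of $\D$.
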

\begin{proof}
    The $t$-exact localization $L\:\C\to \D$ and its right adjoint $i$ induces an adjunction 
    \begin{center}
        \begin{tikzcd}
            \C^\heart \arrow[r, "L^\heart", yshift=2pt] & \D^\heart \arrow[l, yshift=-2pt]
        \end{tikzcd}
    \end{center}
    on the corresponding hearts. As $L$ was $t$-exact, the functor $L^\heart$ is exact. In particular, the heart $\L^\heart$ is the kernel of $L^\heart$, which by \cref{prop:abelian-localizing-iff-kernel-of-localization} means that $\L^\heart$ is an abelian localizing subcategory of $\C^\heart$. 
\end{proof}

This means that we have a map 
\[\texact\overset{(-)^\heart}\to \ablocalizing\]

Just as for the non-$t$-exact case, this map is not injective in general, meaning we have to restrict to a type of subcategory with more structure. 




\begin{definition}
    \label{def:pi-exact-localizing-subcategory}
    A localizing subcategory $\L$ of a $t$-stable category $\C$ is said to be a $\pi$-exact localizing subcategory if 
    \begin{enumerate}
        \item it is $\pi$-stable, and
        \item it is the kernel of a $t$-exact localization. 
    \end{enumerate}
\end{definition}

\begin{remark}
    \label{rm:recurring-3}
    We continue our recurring remark about the dependencies of the different kinds of localizing subcategories introduced in the paper, see \cref{rm:recurring-1} and \cref{rm:recurring-2}. We now have implications
    \begin{center}
        \begin{tikzcd}
            \pi\text{-exact} 
            \arrow[d, Rightarrow] 
            \arrow[r, Rightarrow] 
            & 
            \pi\text{-stable} 
            \arrow[d, Rightarrow] 
            \\
            t\text{-exact} 
            \arrow[r, Rightarrow]                         
            & 
            t\text{-stable}                    
        \end{tikzcd}
    \end{center}
    for any localizing subcategory $\L$ of a $t$-stable category $\C$. 
\end{remark}

\begin{remark}
    The above remark also shows how the classification results are related. By \cref{thm:A} we know that $\pi$-stable corresponds to abelian weak localizing subcategories, and by \cref{cor:t-exact-corresponds-to-prestable-localizing} we know that $t$-exact corresponds to prestable localizing subcategories. By Lurie's classification, see \cref{thm:Lurie-correspondence}, we should expect the combination of the two to yield a correspondence between $\pi$-exact localizing subcategories and abelian localizing subcategories. 
\end{remark}

As $\pi$-exact localizing subcategories are by definition $t$-exact, we immediately get that the map $(-)^\heart$ restricts to a map 
\[\piexact\overset{(-)^\heart}\to \ablocalizing\]
The claim of \cref{thm:B} is that this map is a bijection.  

The $\pi$-exact localizing subcategories are the stable analogs of Lurie's notion of separating prestable localizing subcategories, defined as follows.

\begin{definition}
    Let $\C\geqz$ be a Grothendieck prestable category. A prestable localizing subcategory $\L\geqz \subseteq \C\geqz$ is \emph{separating} if for every $X\in \C\geqz$ such that $\pi_n^\heart X\in \L^\heart$ for all $n$, then $X\in \L\geqz$. 
\end{definition}

What we mean by saying that these are the stable analogs, is that the bijection
\[\texact \overset{(-)\geqz}\to \prlocalizing\]
from \cref{cor:t-exact-corresponds-to-prestable-localizing} restricts to a bijection between $\pi$-exact stable localizing subcategories and separating prestable localizing subcategories. We prove this in two steps. 

\begin{lemma}
    \label{lm:pi-exact-then-separating}
    Let $\C$ be a $t$-stable category. If $\L$ is a $\pi$-exact localizing subcategory of $\C$, then $\L\geqz$ is a separating localizing subcategory of $\C\geqz$. 
\end{lemma}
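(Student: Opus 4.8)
The plan is to verify the two parts of the claim separately: first that $\L\geqz$ is genuinely a prestable localizing subcategory of $\C\geqz$, so that the separating condition is even something we can ask about, and then that the separating condition holds. For the first part, since $\L$ is $\pi$-exact it is in particular the kernel of a $t$-exact localization, so \cref{rm:kernel-of-t-exact-then-prestable-localizing} immediately yields that the connected part $\L\geqz = \L \cap \C\geqz$ is a prestable localizing subcategory of $\C\geqz$. This disposes of the structural requirement and leaves only the defining property of being separating.

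For the separating condition, I would take an arbitrary $X\in \C\geqz$ with $\pi_n^\heart X\in \L^\heart$ for all $n$, and aim to show $X\in \L\geqz$. Before applying any hypothesis on $\L$, the key preliminary step is to confirm that the heart $\L^\heart$ and the functors $\pi_n^\heart$ appearing in the separating hypothesis agree with those used in the definition of $\pi$-stability. Since $\L$ is $\pi$-exact it is $t$-stable (see \cref{rm:recurring-3}), so \cref{rm:t-stable-truncation-homotopy-the-same-functors} guarantees that the truncation and homotopy-group functors on $\L$ coincide with those on $\C$, and that $\L^\heart = \L\cap\C^\heart$ is the common heart of both the stable category $\L$ and its connected part $\L\geqz$. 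Thus the hypothesis ``$\pi_n^\heart X\in \L^\heart$ for all $n$'' is literally the hypothesis of the $\pi$-stability criterion of \cref{def:pi-stable-localizing-subcategory}.

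With this identification in place, the conclusion is direct: the assumption that $\pi_n^\heart X\in \L^\heart$ for all $n$, together with the $\pi$-stability of $\L$, forces $X\in \L$; since also $X\in \C\geqz$ by assumption, we obtain $X\in \L\cap\C\geqz = \L\geqz$, which is exactly the separating property. I expect the main obstacle to be purely a matter of bookkeeping around the various hearts and homotopy-group functors, namely checking that the $\L^\heart$ in Lurie's prestable notion of ``separating'' is the same abelian subcategory as the heart of $\L$ viewed as a stable category. Once the remarks on $t$-stable subcategories sharing their truncations with the ambient $\C$ are invoked, the argument collapses to a one-line application of $\pi$-stability.
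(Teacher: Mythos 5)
Your proof is correct and follows essentially the same route as the paper: first invoke the $t$-exact/prestable correspondence to see that $\L\geqz$ is a prestable localizing subcategory, then apply $\pi$-stability to conclude the separating property, using that $\L\geqz = \L\cap\C\geqz$. The only detail the paper makes explicit that you leave implicit is that, viewing $X\in\C\geqz$ inside $\C$, one has $\pi_i^\heart X = 0\in\L^\heart$ for $i<0$, so the hypothesis of $\pi$-stability (which quantifies over all integers $n$) is indeed satisfied; this is trivial but worth a line.
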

\begin{proof}
    By \cref{cor:t-exact-corresponds-to-prestable-localizing} we know that $\L\geqz$ is a prestable localizing subcategory of $\C\geqz$, so it remains to check that it is separating. Assume $X\in \C\geqz$ and $\pi_n^\heart X \in \L^\heart$ for all $n\geq 0$. Treating $X$ as an object in $\C$ via the inclusion $\C\geqz \hookrightarrow \C$ we have $\pi_i^\heart X = 0$ for all $i<0$. Hence,by the assumption that $\L$ is $\pi$-stable, we must have $X\in \L$. This means that $X \in \C\geqz\cap \L = \L\geqz$, which finishes the proof. 
\end{proof}

\begin{lemma}
    \label{lm:separating-then-pi-exact}
    If $\L\geqz$ is a separating prestable localizing subcategory of $\C\geqz$, then $\Sp(\L\geqz)$ is a $\pi$-exact localizing subcategory of $\C$. 
\end{lemma}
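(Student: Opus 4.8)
The plan is to verify the two defining conditions of a $\pi$-exact localizing subcategory (\cref{def:pi-exact-localizing-subcategory}) for $\L := \Sp(\L\geqz)$ separately. Condition (2), that $\L$ is the kernel of a $t$-exact localization, is immediate: a separating localizing subcategory is in particular a prestable localizing subcategory, so this is exactly the conclusion of \cref{lm:prestable-localizing-then-kernel-of-t-exact}. By \cref{lm:t-exact-then-t-stable-kernel} this also tells us that $\L$ is a $t$-stable localizing subcategory, so the truncation and homotopy functors restrict to $\L$, and $\L^\heart = \L \cap \C^\heart$ coincides with the heart of $\L\geqz$. It therefore remains only to establish condition (1), that $\L$ is $\pi$-stable, i.e. that $X \in \L$ if and only if $\pi_n^\heart X \in \L^\heart$ for all $n$.

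The forward implication is formal: if $X \in \L$ then, since $\L$ is $t$-stable, each $\pi_n^\heart X$ lies in $\L$, and being an object of $\C^\heart$ it lies in $\L \cap \C^\heart = \L^\heart$. The substance of the lemma is the converse, where the separating hypothesis must be leveraged. The difficulty is that separation only constrains connective objects $X \in \C\geqz$, whereas $\pi$-stability is a statement about all of $\C$; the bridge between the two will be right completeness of the $t$-structure.

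So suppose $\pi_n^\heart X \in \L^\heart$ for all $n$. First I would use right completeness to write $X \simeq \colim_n \tau_{\geq -n} X$. Since $\L$ is closed under filtered colimits and under shifts, it suffices to show that each truncation $\tau_{\geq -n} X$ lies in $\L$, equivalently that its shift $Y := (\tau_{\geq -n} X)[n] \in \C\geqz$ lies in $\L\geqz = \L \cap \C\geqz$. For this I would apply the separating property to $Y$: its heart-valued homotopy groups vanish in negative degrees (as $Y$ is connective), and for $m \geq 0$ one computes $\pi_m^\heart Y \simeq \pi_{m-n}^\heart(\tau_{\geq -n} X) \simeq \pi_{m-n}^\heart X$, using that $\tau_{\geq -n}$ leaves homotopy in degrees $\geq -n$ unchanged together with $m - n \geq -n$. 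By hypothesis each such group lies in $\L^\heart$, so separation gives $Y \in \L\geqz$, whence $\tau_{\geq -n} X = Y[-n] \in \L$, and passing to the colimit yields $X \in \L$. The main obstacle is precisely this connective reduction --- arranging the shift and truncation bookkeeping so that the separating hypothesis applies --- and it is exactly here that right completeness is indispensable.
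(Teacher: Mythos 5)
Your proof is correct, and the mechanism you use for $\pi$-stability is genuinely different from the paper's. The paper gets the $t$-exactness condition the same way you do (via \cref{cor:t-exact-corresponds-to-prestable-localizing}, which packages \cref{lm:prestable-localizing-then-kernel-of-t-exact}), but for $\pi$-stability it argues by contradiction through the localization functor itself: if $X\notin\L$ then $LX\neq 0$, and since $L$ is $t$-exact, $L\tau\geqz X\simeq \tau\geqz LX$ is claimed to be nonzero, so $\tau\geqz X$ would be a connective counterexample to separation. You instead never touch $L$: you write $X\simeq \colim_n \tau_{\geq -n}X$ by right completeness, apply the separating hypothesis to each shifted truncation $(\tau_{\geq -n}X)[n]\in\C\geqz$ (with the correct bookkeeping $\pi_m^\heart((\tau_{\geq -n}X)[n])\simeq\pi_{m-n}^\heart X$ for $m\geq 0$), and conclude by closure of $\L$ under shifts and filtered colimits. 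Your route is arguably the more careful one: the paper's single truncation $\tau\geqz X$ does not quite suffice as written, because $LX\neq 0$ does not by itself force $\tau\geqz LX\neq 0$ (the homotopy of $LX$ could be concentrated in negative degrees), and patching this requires exactly the step you take --- ranging over all $\tau_{\geq -n}X$ and using right completeness. The price of your approach is that it leans explicitly on right completeness of the ambient $t$-structure and on closure of $\L$ under filtered colimits, but both are available here since $\C$ is $t$-stable and $\L=\Sp(\L\geqz)$ is a stable localizing subcategory.
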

\begin{proof}
    We know by \cref{cor:t-exact-corresponds-to-prestable-localizing} that $\Sp(\L\geqz)$ is a $t$-exact localizing subcategory of $\C$, so it remains to show that it is $\pi$-stable. 

    For the sake of a contradiction, assume that there is some $X\in \C$ with $\pi_n^\heart X\in \L^\heart$ for all $n$, but $X\not\in \L$. As the corresponding localization functor $L\:\C\to \D$ is $t$-exact we get $L\tau\geqz X \simeq \tau\geqz LX$, which is by assumption non-zero, as $X$ was not in $\L$. This means, however, that there is an object $Y=\tau\geqz X$ in $\C\geqz$ with $\pi_n^\heart Y \in \L^\heart$ but $Y$ not in $\L\geqz$, which contradicts $\L\geqz$ begin separating.  
\end{proof}



We are now ready to prove \cref{thm:B}. As for \cref{thm:A} we prove that the map $(-)^\heart$ is both injective and surjective, starting with the former. 

\begin{lemma}
    \label{lm:pi-exact-are-the-biggest}
    Let $\C$ be a $t$-stable category. Given two $t$-exact localizing subcategories $\L_0$ and $\L_1$, where $\L_1$ is $\pi$-exact, then $\L_0 \subseteq \L_1$ if and only if $\L_0^\heart \subseteq \L_1^\heart$. 
\end{lemma}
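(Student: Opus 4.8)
The statement closely parallels Lemma~\ref{lm:pi-stable-are-the-biggest}, which handles the $\pi$-stable/$t$-stable analog, so the plan is to adapt that argument. The key point is that both $\L_0$ and $\L_1$ are $t$-stable (every $t$-exact localizing subcategory is $t$-stable by Remark~\ref{rm:recurring-1}, and every $\pi$-exact one is $t$-stable as well by Remark~\ref{rm:recurring-3}). Hence the truncation functors $\tau_{\geq n}$, $\tau_{\leq n}$ and the heart-valued homotopy groups $\pi_k^\heart$ agree in $\L_0$, $\L_1$, and $\C$, as recorded in Remark~\ref{rm:t-stable-truncation-homotopy-the-same-functors}. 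This compatibility is what makes the heart-level containment detect the stable-level containment.

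For the nontrivial direction, I would assume $\L_0^\heart \subseteq \L_1^\heart$ and take $X \in \L_0$. Since $\L_0$ is $t$-stable, we have $\pi_k^\heart X \in \L_0^\heart$ for all $k$, and the hypothesis gives $\pi_k^\heart X \in \L_1^\heart$ for all $k$. Now the crucial step is to invoke that $\L_1$ is $\pi$-\emph{exact}, hence in particular $\pi$-stable (Remark~\ref{rm:recurring-3}); by the very definition of $\pi$-stability (Definition~\ref{def:pi-stable-localizing-subcategory}), the condition $\pi_k^\heart X \in \L_1^\heart$ for all $k$ forces $X \in \L_1$. This yields $\L_0 \subseteq \L_1$.

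For the converse, suppose $\L_0 \subseteq \L_1$. Because both carry the induced sub-$t$-structure and their truncation functors coincide with those of $\C$, the subcategory $\L_0$ is itself a $t$-stable localizing subcategory of the $t$-stable category $\L_1$ (using that $\L_1$ is a $t$-stable category in its own right, cf.\ Remark~\ref{rm:pi-stable-implies-t-stable}). Intersecting with the heart then gives $\L_0^\heart = \L_1^\heart \cap \L_0$, whence $\L_0^\heart \subseteq \L_1^\heart$.

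I expect no serious obstacle here: the genuine work was already done in establishing $\pi$-stability as the correct detection condition and in verifying that the sub-$t$-structures have matching truncation functors. The only point demanding care is making sure that the hypothesis $\pi$-exact on $\L_1$ is used precisely through its $\pi$-stable consequence (Remark~\ref{rm:recurring-3}), rather than through the $t$-exactness clause; the $t$-exactness of $\L_0$ and $\L_1$ plays no role in this particular lemma beyond guaranteeing they are $t$-stable. Essentially this is the same proof as Lemma~\ref{lm:pi-stable-are-the-biggest} with ``$\pi$-stable'' upgraded to ``$\pi$-exact'' on $\L_1$.
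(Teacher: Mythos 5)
Your proof is correct and follows the same route as the paper: the paper simply observes that $\L_1$ being $\pi$-exact makes it $\pi$-stable and $\L_0$ being $t$-exact makes it $t$-stable, and then cites Lemma~\ref{lm:pi-stable-are-the-biggest} directly, whereas you unfold that lemma's argument in place. The content is identical, including your closing observation that the $t$-exactness hypotheses enter only through their $t$-stable consequences.
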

\begin{proof}
    This immediately follows from the non-$t$-exact case from \cref{lm:pi-stable-are-the-biggest}, as $\L_1$ is $\pi$-stable and $\L_0$ is $t$-stable. 
\end{proof}

As before, this implies that the wanted map is injective. 

\begin{corollary}
    \label{cor:pi-exact-heart-injective}
    For any $t$-stable category $\C$, the map
    \[\piexact \overset{(-)^\heart}\to \ablocalizing\]
    is injective. 
\end{corollary}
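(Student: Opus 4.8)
The plan is to deduce injectivity directly from \cref{lm:pi-exact-are-the-biggest}, in exactly the same way that \cref{cor:pi-stable-heart-injective} was deduced from \cref{lm:pi-stable-are-the-biggest}. First I would take two $\pi$-exact localizing subcategories $\L_0, \L_1 \subseteq \C$ whose images under $(-)^\heart$ agree, i.e. $\L_0^\heart = \L_1^\heart$ as subcategories of $\C^\heart$. In particular this gives containments in both directions on hearts, $\L_0^\heart \subseteq \L_1^\heart$ and $\L_1^\heart \subseteq \L_0^\heart$, so the task reduces to promoting these to containments of the subcategories themselves.

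The key observation is that $\pi$-exactness is strong enough to feed into \cref{lm:pi-exact-are-the-biggest} with either subcategory in either role. By \cref{def:pi-exact-localizing-subcategory} and the dependencies recorded in \cref{rm:recurring-3}, a $\pi$-exact localizing subcategory is simultaneously $t$-exact and $\pi$-exact, so each of $\L_0$ and $\L_1$ qualifies both as one of the two $t$-exact subcategories in the lemma and as the distinguished $\pi$-exact one. I would therefore apply \cref{lm:pi-exact-are-the-biggest} twice: taking $\L_1$ as the $\pi$-exact subcategory gives $\L_0 \subseteq \L_1$ from $\L_0^\heart \subseteq \L_1^\heart$, and swapping the two roles gives $\L_1 \subseteq \L_0$ from $\L_1^\heart \subseteq \L_0^\heart$. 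Combining the two yields $\L_0 = \L_1$, which is precisely injectivity of the map $\piexact \to \ablocalizing$.

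I do not anticipate any genuine obstacle here, since all the real content has already been front-loaded into \cref{lm:pi-exact-are-the-biggest} (which itself reduces to the $\pi$-stable case \cref{lm:pi-stable-are-the-biggest}). The only point deserving a moment of care is verifying that each subcategory may legitimately occupy both roles in the lemma, and this is immediate because both are $\pi$-exact, hence both $t$-exact and $\pi$-stable.
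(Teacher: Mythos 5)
Your proposal is correct and is exactly the argument the paper intends: the paper deduces this corollary from \cref{lm:pi-exact-are-the-biggest} in the same way that \cref{cor:pi-stable-heart-injective} follows from \cref{lm:pi-stable-are-the-biggest}, applying the lemma in both directions since each $\pi$-exact subcategory is in particular $t$-exact and may occupy either role. No gaps.
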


It remains then to show that the map is also surjective. In order to do this we invoke Lurie's correspondence. The author originally wanted to have a proof not relying on the prestable case. But, we currently do not know how to directly lift an abelian subcategory to a kernel of a $t$-exact functor, without passing through the bijection from \cref{cor:t-exact-corresponds-to-prestable-localizing}. There is a more direct approach in certain contexts --- for example if the $t$-structure is bounded, see \cite[2.20]{antieau-gepner-heller_2019}, or the inclusion $\L\subseteq \C$ preserves compacts, see \cite[2.7]{hennion-porta-vezzosi_2016} --- but as far as the author is aware, there is no general way to know when the localization determined by a localizing subcategory $\L$ is $t$-exact. 

\begin{theorem}[{\cite[C.5.2.7]{lurie_SAG}}]
    \label{thm:Lurie-correspondence}
    For any Grothendieck prestable category $\C\geqz$, there is a bijection 
    \[\separating \to \ablocalizing\]
    given by $\L\geqz\longmapsto \L^\heart$. 
\end{theorem}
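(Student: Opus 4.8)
The plan is to follow the template of the proof of \cref{thm:premain}, transported from weak Serre/$\pi$-stable data to Serre/separating data; indeed this is precisely the prestable shadow of that argument, and the only genuinely new phenomenon is that the heart is now a Serre (rather than merely weak Serre) subcategory, which is exactly what allows one to control sub-objects. First I would check that the assignment $\L\geqz\longmapsto \L^\heart$ is well defined. By \cref{rm:prestable-localizing-is-Grothendieck} a prestable localizing subcategory $\L\geqz$ is itself Grothendieck prestable, so $\L^\heart$ is a Grothendieck abelian category and the inclusion $\L^\heart\hookrightarrow \C^\heart$ is exact and fully faithful. Since a short exact sequence $0\to A\to B\to C\to 0$ in $\C^\heart$ is a cofiber sequence in $\C\geqz$, closure of $\L\geqz$ under cofiber sequences and sub-objects forces $\L^\heart$ to be closed under quotients, sub-objects and extensions, hence Serre; closure under coproducts is inherited directly from $\L\geqz$. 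Thus $\L^\heart$ is an abelian localizing subcategory.

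For injectivity I would prove the prestable analog of \cref{lm:pi-stable-are-the-biggest}: among all prestable localizing subcategories of $\C\geqz$ with a fixed heart, a separating one is the largest. Concretely, if $\L_0\geqz$ is prestable localizing and $\L_1\geqz$ is separating with $\L_0^\heart\subseteq \L_1^\heart$, then for $X\in \L_0\geqz$ each $\pi_n^\heart X\in \L_0^\heart\subseteq \L_1^\heart$, so $X\in \L_1\geqz$ by the separating property; the reverse implication on hearts is automatic. Applying this to two separating subcategories with equal hearts forces mutual containment, so the map is injective.

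Surjectivity is the substantive step. Given an abelian localizing subcategory $\T\subseteq \C^\heart$, I would define $\L\geqz\subseteq \C\geqz$ to be the full subcategory of objects $X$ with $\pi_n^\heart X\in \T$ for all $n$; then $\L\geqz$ is separating and has heart $\T$ by construction, since an object of $\C^\heart$ has only $\pi_0^\heart$ nonzero. It remains to verify that $\L\geqz$ is prestable localizing, i.e. accessible and closed under coproducts, cofiber sequences and sub-objects. Closure under coproducts and cofiber sequences is identical to the corresponding part of \cref{thm:premain}, using that $\pi_n^\heart$ preserves coproducts together with the long exact sequence and the weak Serre property of $\T$. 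The new ingredient is closure under sub-objects: for $f\:C'\to C$ with $C\in \L\geqz$ and $\Cofib(f)=Q\in \C^\heart$, the cofiber sequence $C'\to C\to Q$ gives, via the long exact sequence and the connectivity of $\C\geqz$, isomorphisms $\pi_n^\heart C'\cong \pi_n^\heart C$ for $n\geq 1$ and a short exact sequence $0\to \pi_0^\heart C'\to \pi_0^\heart C\to Q\to 0$ exhibiting $\pi_0^\heart C'$ as a sub-object of $\pi_0^\heart C\in \T$. As $\T$ is Serre, hence closed under sub-objects, we obtain $\pi_0^\heart C'\in \T$ and therefore $C'\in \L\geqz$.

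The main obstacle I anticipate is twofold. First, the sub-object computation above is exactly where the weak case breaks down and the full Serre condition becomes essential, so it must be set up carefully with the correct homological indexing (in particular using $\pi_{-1}^\heart C'=\pi_{-1}^\heart C=0$). Second, establishing accessibility of $\L\geqz$ does not follow formally from the closure properties; for this I would show that $\L\geqz$ is closed under all colimits in the presentable category $\C\geqz$ and is generated under colimits by a set — for instance a generating set of $\T$ together with its shifts inside $\C\geqz$ — and then invoke the standard presentability criterion, as in Lurie's proof of \cite[C.5.2.7]{lurie_SAG}.
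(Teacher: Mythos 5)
This statement is not proved in the paper at all: it is imported verbatim from Lurie as \cite[C.5.2.7]{lurie_SAG}, and the author explicitly says the surrounding argument ``invokes Lurie's correspondence'' as a black box. So there is no in-paper proof to compare against; what you have done is reconstruct the proof of the imported result. Your reconstruction is essentially correct and is the expected argument --- indeed it is the mirror image of the paper's own proof of \cref{thm:premain}, which the author describes as ``the proof of \cite[C.5.2.7]{lurie_SAG}, adapted to the stable setting.'' The genuinely new ingredient you identify is the right one: closure under sub-objects is exactly where the full Serre condition (rather than weak Serre) enters, via the short exact sequence $0\to \pi_0^\heart C'\to \pi_0^\heart C\to Q\to 0$, and your indexing there is correct. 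Your well-definedness check is also sound, provided you note that closure of $\L^\heart$ under quotients uses both closure axioms in sequence: for $0\to A\to B\to C\to 0$ with $B\in\L^\heart$, first $A\in\L\geqz$ because it is a sub-object of $B$ (its cofiber $C$ lies in $\C^\heart$), and only then is $C$ obtained as a cofiber of a map in $\L\geqz$.

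The one place where your argument is a sketch rather than a proof is accessibility of the constructed $\L\geqz$, which you acknowledge. Your proposed route (closure under all colimits plus generation by a set) is workable but the generation claim is not obvious; a cleaner route is to observe that $\L\geqz=\bigcap_{n\geq 0}(\pi_n^\heart)^{-1}(\T)$, that $\T$ is an accessible subcategory of $\C^\heart$ (being a Grothendieck abelian localizing subcategory, e.g.\ by \cref{prop:abelian-localizing-iff-kernel-of-localization}), that preimages of accessible subcategories under accessible functors are accessible, and that small intersections of accessible subcategories are accessible. Note also that the paper's proof of \cref{thm:premain} silently elides the same point (presentability of the constructed stable localizing subcategory), so you are in good company. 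With that detail supplied, your proof is complete.
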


Using this, together with \cref{lm:separating-then-pi-exact} we finally get our wanted one-to-one correspondence. 

\begin{theorem}[\cref{thm:B}]
    \label{thm:main}
    Let $\C$ be a $t$-stable category. There is a bijective map
    \[\piexact\overset{(-)^\heart}\to \ablocalizing\]
    given by $\L\longmapsto \L^\heart$. 
\end{theorem}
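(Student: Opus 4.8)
The plan is to prove the map $(-)^\heart$ from $\pi$-exact localizing subcategories to abelian localizing subcategories is a bijection by establishing injectivity and surjectivity separately, leaning on the prestable classification of \cref{thm:Lurie-correspondence} for the latter. Injectivity is already handled by \cref{cor:pi-exact-heart-injective}, so the substance of the argument is surjectivity: given an abelian localizing subcategory $\T\subseteq\C^\heart$, I must produce a $\pi$-exact localizing subcategory $\L\subseteq\C$ with $\L^\heart\simeq\T$.

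The key idea is to route the construction through the prestable world, where the work has already been done, and then stabilize. First I would invoke \cref{thm:Lurie-correspondence}: since $\C\geqz$ is a Grothendieck prestable category, the abelian localizing subcategory $\T\subseteq\C^\heart$ corresponds to a \emph{separating} prestable localizing subcategory $\L\geqz\subseteq\C\geqz$ with $(\L\geqz)^\heart\simeq\T$. Next I would stabilize, applying \cref{lm:separating-then-pi-exact}, which tells me precisely that $\Sp(\L\geqz)$ is a $\pi$-exact localizing subcategory of $\C$. So I set $\L:=\Sp(\L\geqz)$. It then remains only to identify its heart. By \cref{cor:t-exact-corresponds-to-prestable-localizing} the functors $(-)\geqz$ and $\Sp(-)$ are mutually inverse, so $\L\geqz\simeq(\Sp(\L\geqz))\geqz$, whence $\L^\heart\simeq(\L\geqz)^\heart\simeq\T$, using that the heart of a $t$-stable localizing subcategory is computed by the shared truncation and $\pi_0^\heart$ functors (see \cref{rm:t-stable-truncation-homotopy-the-same-functors}). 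This exhibits $\T$ as the image of a $\pi$-exact $\L$, giving surjectivity.

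For the full statement I would then combine this with the injectivity of \cref{cor:pi-exact-heart-injective} to conclude that $(-)^\heart$ is a bijection, completing the proof. I would also remark that the whole argument is essentially a compatibility check: the bijections $\piexact\overset{(-)\geqz}\simeq\separating$ from the restriction of \cref{cor:t-exact-corresponds-to-prestable-localizing} to separating objects (established by \cref{lm:pi-exact-then-separating} and \cref{lm:separating-then-pi-exact}) and $\separating\overset{(-)^\heart}\simeq\ablocalizing$ from \cref{thm:Lurie-correspondence} compose, after checking that the composite of the two heart-identifications agrees with $(-)^\heart$ computed directly on $\C$.

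The main obstacle I anticipate is not any single hard estimate but rather the bookkeeping needed to verify that the heart of $\L=\Sp(\L\geqz)$ as a subcategory of $\C$ genuinely coincides with $\T$ as a subcategory of $\C^\heart$ --- i.e.\ that passing up to $\C\geqz$, then to the heart, then back down and re-stabilizing does not shift which objects of $\C^\heart$ one lands on. This hinges on the fact, recorded in \cref{rm:t-stable-truncation-homotopy-the-same-functors} and \cref{cor:t-exact-corresponds-to-prestable-localizing}, that the truncation and homotopy-group functors are the same whether computed in $\L$, in $\L\geqz$, or in $\C$; once that identification is in hand the remaining steps are formal. The subtlety flagged in the surrounding text --- that we cannot directly lift an abelian subcategory to the kernel of a $t$-exact functor without the prestable detour --- is exactly why the proof must be organized as above rather than attempted intrinsically in $\C$.
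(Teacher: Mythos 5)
Your proposal is correct and follows essentially the same route as the paper: injectivity via \cref{cor:pi-exact-heart-injective}, then surjectivity by lifting an abelian localizing subcategory to a separating prestable localizing subcategory through \cref{thm:Lurie-correspondence} and stabilizing via \cref{lm:separating-then-pi-exact}. The extra care you take in identifying the heart of $\Sp(\L\geqz)$ with the original subcategory of $\C^\heart$ is a point the paper leaves implicit, but it is the same argument.
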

\begin{proof}
    The map is injective by \cref{cor:pi-exact-heart-injective}, so it remains only to show surjectivity. Let $\A\subseteq \C^\heart$ be an abelian localizing subcategory. By \cref{thm:Lurie-correspondence} there is a unique separating prestable localizing subcategory $\L\geqz \subseteq \C\geqz$ such that $\L^\heart \simeq \A$. By \cref{lm:separating-then-pi-exact} the spectrum objects in this category, $\Sp(\L\geqz)$ is a $\pi$-exact stable localizing subcategory of $\C$ with heart $\A$. Hence, the map is also surjective. 
\end{proof}


From this we again obtain some natural corollaries. The first one is a partial converse to \cite[2.13]{takahashi_2009}.

\begin{corollary}
    \label{cor:smashing-pi-exact}
    Let $R$ be a commutative noetherian ring and equip $\Der(R)$ with its natural $t$-structure. In this situation there is a bijection between the collection of smashing localizing subcategories and the collection of $\pi$-exact localizing subcategories in $\Der(R)$. 
\end{corollary}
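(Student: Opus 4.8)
The plan is to identify both collections of subcategories with the specialization-closed subsets of $\Spec R$, through which they will be put in bijection. Recall that the heart of the natural $t$-structure on $\Der(R)$ is $\Mod_R$. I would begin on the $\pi$-exact side by invoking the main correspondence \cref{thm:main}, which gives a bijection between $\pi$-exact localizing subcategories of $\Der(R)$ and abelian localizing subcategories of $\Mod_R$. Since $R$ is noetherian, Gabriel's classification \cite[VI.2.4(b)]{gabriel_1962} identifies the latter with the specialization-closed subsets $V\subseteq \Spec R$, where $V$ corresponds to the Serre subcategory $(\Mod_R)_V$ of modules with support contained in $V$. Composing, every $\pi$-exact localizing subcategory has heart of the form $(\Mod_R)_V$.

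Next I would pin down these subcategories explicitly. Given a $\pi$-exact localizing subcategory $\L$ with $\L^\heart = (\Mod_R)_V$, the defining $\pi$-stability property (\cref{def:pi-stable-localizing-subcategory}) gives that $X\in \L$ if and only if $\pi_n^\heart X = H_n(X) \in (\Mod_R)_V$ for all $n$, which is precisely the condition $\mathrm{Supp}(X)\subseteq V$. Hence $\L$ is the support subcategory $\L_V := \{X\in \Der(R) : \mathrm{Supp}(X)\subseteq V\}$, and conversely each such $\L_V$ is the unique $\pi$-exact subcategory with heart $(\Mod_R)_V$. Thus the collection of $\pi$-exact localizing subcategories is exactly $\{\L_V : V \text{ specialization-closed}\}$.

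On the smashing side, I would appeal to Neeman's classification of localizing subcategories of $\Der(R)$ by subsets of $\Spec R$, together with the telescope conjecture for noetherian rings, which jointly show that the smashing localizing subcategories are exactly the support subcategories $\L_V$ for $V$ specialization-closed. Comparing the two descriptions, both collections coincide with $\{\L_V\}$, so they are literally equal and the asserted bijection is realized by the identity --- equivalently, by the composite of the three bijections above.

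The main obstacle, and the part deserving the most care, is the compatibility of the two parametrizations: one must verify that the specialization-closed set attached to a $\pi$-exact subcategory through its heart (via Gabriel) agrees with the one attached to it as a smashing subcategory (via Neeman and support). The cleanest route, which I would follow, is to avoid comparing the two bijections abstractly and instead to show directly --- using $\pi$-stability to translate ``heart supported on $V$'' into ``object supported on $V$'' --- that both collections equal the set of support subcategories $\L_V$. This reduces the compatibility question to the elementary observation that the support of a complex is the union of the supports of its homology modules.
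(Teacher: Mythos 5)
Your proposal is correct, and at its core it is the same argument as the paper's: the paper simply composes Gabriel's bijection between abelian localizing subcategories of $\Mod_R$ and specialization-closed subsets of $\Spec R$, Neeman's bijection between specialization-closed subsets and smashing subcategories of $\Der(R)$, and the bijection of \cref{thm:main}, and stops there. You go one step further by identifying both collections concretely with the support subcategories $\L_V$ and asserting that the resulting bijection is the identity; the paper does not attempt this, since the corollary only claims an abstract bijection. Your extra step is worthwhile but hides one point that deserves care: Neeman's classification attaches to a specialization-closed $V$ the localizing subcategory generated by the residue objects $R/\mathfrak{p}$ for $\mathfrak{p}\in V$, equivalently the objects whose \emph{small} support (defined via derived base change to residue fields) lies in $V$, whereas $\pi$-stability hands you the condition that the \emph{homological} support $\bigcup_n \mathrm{Supp}\,H_n(X)$ lies in $V$. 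These two conditions do agree for specialization-closed $V$ over a noetherian ring, but that is a genuine (if standard) lemma --- it is not the tautology that the support of a complex is the union of the supports of its homology, since that equation is the definition of the homological support, not of the one appearing in Neeman's theorem. With that lemma supplied (or with the comparison of parametrizations simply dropped, as in the paper), your argument is complete.
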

\begin{proof}
    A theorem of Gabriel, see \cite[VI.2.4(b)]{gabriel_1962}, shows that there is a bijection between the collection of localizing subcategories of $\Mod_R$ and specialization closed subsets of $\Spec R$. Further, Neeman shows in \cite[3.3]{neeman_1992} that there is a bijection between specialization closed subsets of $\Spec R$ and smashing localizing subcategories of $\Der(R)$. The result then follows from these, together with \cref{thm:main}. 
\end{proof}

We can also obtain an extension of \cref{cor:smashing-pi-exact} to noetherian schemes $X$. Recall that we denote the abelian category of quasi-coherent sheaves on $X$ by $\QCoh(X)$, and its associated derived category of quasi-coherent $\mathcal{O}_X$-modules by $\Der_{qc}(X)$. 

\begin{lemma}
    For any noetherian scheme $X$, there are bijections
    \[\DXsmashing\simeq \subsetsX \simeq \QCohlocalizing.\]
\end{lemma}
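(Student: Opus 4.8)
The plan is to establish the two bijections separately, each as the scheme-theoretic analogue of one of the two inputs used in \cref{cor:smashing-pi-exact}, and then compose them. The intermediate object, the collection $\subsetsX$ of specialization closed subsets of $X$, plays the same bridging role that $\Spec R$ played in the affine case.

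For the bijection $\subsetsX \simeq \QCohlocalizing$ I would appeal directly to Gabriel's classification. As recorded in the remark following \cref{conj:coherent-noetherian-scheme}, Gabriel's theorem \cite[VI.2.4(b)]{gabriel_1962} already gives, for a noetherian scheme $X$, a bijection between specialization closed subsets $Z \subseteq X$ and abelian localizing subcategories of $\QCoh(X)$, sending $Z$ to the full subcategory of quasi-coherent sheaves whose support is contained in $Z$. This is precisely the scheme-level version of the correspondence between localizing subcategories of $\Mod_R$ and specialization closed subsets of $\Spec R$ invoked in \cref{cor:smashing-pi-exact}, so no new work is needed here.

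For the bijection $\DXsmashing \simeq \subsetsX$ I would use the extension of Neeman's theorem from rings to noetherian schemes. The category $\Der_{qc}(X)$ is rigidly-compactly generated and symmetric monoidal, and by Thomason's reconstruction theorem its Balmer spectrum is homeomorphic to $X$; consequently the thick subcategories of its compact (perfect) objects are classified by the Thomason subsets of $X$, which for a noetherian scheme are exactly the specialization closed subsets. Since the telescope conjecture holds for $\Der_{qc}(X)$ when $X$ is noetherian, every smashing localizing subcategory is generated by compact objects, so smashing subcategories correspond bijectively to specialization closed subsets of $X$, again via $Z \mapsto \{\, F \mid \mathrm{supp}\, F \subseteq Z \,\}$.

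The main obstacle, and the only substantive input beyond citation, is the left-hand bijection: one needs both the identification of Thomason subsets with specialization closed subsets in the noetherian case and the validity of the telescope conjecture for $\Der_{qc}(X)$, the latter being the scheme analogue of Neeman's \cite[3.3]{neeman_1992} affine result. Once both bijections are in place I would check that they are compatible, in the sense that a specialization closed subset $Z$ is sent on the one side to the smashing subcategory of complexes supported on $Z$ and on the other to the abelian subcategory of sheaves supported on $Z$, with the two related by the heart functor $(-)^\heart$. Combined with \cref{thm:main}, this shows that the smashing subcategories arising here are in particular $\pi$-exact, so the composite bijection is exactly the restriction to smashing subcategories of the correspondence of \cref{thm:main}, mirroring the affine picture of \cref{cor:smashing-pi-exact}.
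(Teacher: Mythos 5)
Your overall strategy is the same as the paper's: the right-hand bijection is Gabriel's theorem \cite[VI.2.4(b)]{gabriel_1962}, and the left-hand bijection rests on the telescope conjecture for noetherian schemes. The only packaging difference is that you unwind the support-theoretic classification into Thomason's reconstruction theorem plus the telescope conjecture, whereas the paper cites \cite[4.13]{tarrio-lopez-salorio_2004} for the telescope conjecture and \cite[8.13]{stevenson_2013} for the resulting bijection between subsets of $X$ and localizing $\otimes$-ideals; these amount to the same argument.

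One precision issue is worth fixing, though, since $X$ is not assumed affine. Thomason's theorem classifies the thick \emph{$\otimes$-ideals} of perfect complexes by Thomason subsets, not all thick subcategories; for a non-affine noetherian scheme (e.g.\ $\mathbb{P}^1$) there are thick subcategories of perfect complexes that are not $\otimes$-ideals, so your sentence ``the thick subcategories of its compact (perfect) objects are classified by the Thomason subsets'' is false as stated. Likewise the telescope conjecture and the resulting classification by specialization closed subsets concern smashing localizing \emph{$\otimes$-ideals}. The paper is careful on this point (it speaks of ``localizing $\otimes$-ideals'' and then restricts), and your argument goes through verbatim once you insert ``$\otimes$-ideal'' in the appropriate places; in the affine case of \cref{cor:smashing-pi-exact} the distinction disappears because the unit generates, which is presumably why it did not surface in your model argument.
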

\begin{proof}
    The latter bijection is again due to Gabriel --- \cite[VI.2.4(b)]{gabriel_1962}. By \cite[4.13]{tarrio-lopez-salorio_2004} the telescope conjecture holds for noetherian schemes. In particular, this means that there is a bijection between subsets of $X$ and localizing $\otimes$-ideals in $\Der_{qc}(X)$, see \cite[8.13]{stevenson_2013}, which restricts to a bijection
    \[\DXsmashing\simeq \subsetsX,\]
    giving the first bijection. 
\end{proof}

Utilizing this, together with \cref{thm:main}, we obtain the following generalization. 

\begin{corollary}
    Let $X$ be a noetherian scheme and equip $\Der_{qc}(X)$ with its natural $t$-structure. In this situation, there is a bijection
    \[\DXsmashing \simeq \DXlocalizing.\]
\end{corollary}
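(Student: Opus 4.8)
The plan is to obtain the claimed correspondence purely by composing bijections that are already in hand, so that no new geometric or categorical input is required. The preceding lemma supplies bijections $\DXsmashing \simeq \subsetsX \simeq \QCohlocalizing$, reducing the corollary to producing a bijection between the (abelian) localizing subcategories of $\QCoh(X)$ and the $\pi$-exact localizing subcategories of $\Der_{qc}(X)$. The entire remaining content is therefore to recognize $\QCoh(X)$ as the heart of a $t$-stable category and feed this into \cref{thm:main}.

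First I would recall, from the list of examples of $t$-stable categories above, that $\Der_{qc}(X)$ equipped with its natural $t$-structure is $t$-stable, and that its heart is exactly $\QCoh(X)$. These two facts are precisely what license the application of \cref{thm:main}. Then I would invoke \cref{thm:main} with $\C = \Der_{qc}(X)$: since $\C^\heart = \QCoh(X)$, the theorem gives a bijection $\piexact \overset{(-)^\heart}\to \ablocalizing$ which, in the notation of the present statement, reads $\DXlocalizing \simeq \QCohlocalizing$. Concatenating this with the lemma yields
\[\DXsmashing \simeq \subsetsX \simeq \QCohlocalizing \simeq \DXlocalizing,\]
which is the asserted bijection.

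I do not expect a genuine obstacle: the proof is a bookkeeping composition whose weight sits entirely in the two inputs (the telescope/Gabriel bijections packaged in the preceding lemma, and the heart-classification of \cref{thm:main}). The only points that deserve explicit care are (i) checking that the hypotheses of \cref{thm:main}, namely right completeness and compatibility with filtered colimits, hold for $\Der_{qc}(X)$ — but these are subsumed in the statement that $\Der_{qc}(X)$ is $t$-stable — and (ii) confirming that the two occurrences of ``localizing subcategories of $\QCoh(X)$'' (the target $\QCohlocalizing$ of the lemma and the target $\ablocalizing$ of \cref{thm:main}) denote the same collection of abelian localizing subcategories. It is worth flagging, as in \cref{cor:smashing-pi-exact}, that the resulting bijection is assembled through the intermediate sets and need not identify a smashing subcategory with itself viewed as a $\pi$-exact one; the corollary asserts only that the two collections are abstractly in bijection.
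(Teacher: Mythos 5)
Your proof is correct and matches the paper's own argument exactly: the paper derives this corollary by composing the preceding lemma's bijections $\DXsmashing\simeq \subsetsX \simeq \QCohlocalizing$ with the bijection of \cref{thm:main} applied to the $t$-stable category $\Der_{qc}(X)$, whose heart is $\QCoh(X)$. Your added caveats about verifying the $t$-stability hypotheses and about the bijection being abstract rather than induced by an identification are sensible but not points the paper dwells on.
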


Using \cref{cor:noetherian-scheme-weak-localizing} we then get a partial extension of the two bottom rows in the main result of \cite{takahashi_2009} to the case of noetherian schemes.  

\begin{center}
    \begin{tikzcd}
        \DXstable 
        \arrow[r, lightgray, "\simeq"]
        \arrow[rr, "\simeq", bend left = 11]                 
        & 
        \coherentX 
        \arrow[r, lightgray, "\simeq"]                
        & 
        \QCohweak                       \\
        \DXsmashing 
        \arrow[u, hook, "\subseteq", swap] 
        \arrow[r, "\simeq"] 
        & 
        \subsetsX 
        \arrow[u, hook, lightgray, "\subseteq", swap] 
        \arrow[r, "\simeq"] 
        & 
        \QCohlocalizingthree 
        \arrow[u, hook, "\subseteq", swap]
    \end{tikzcd}
\end{center}

Here the grey color indicates the conjectured relationship from \cref{conj:coherent-noetherian-scheme}. 

We can also use the proof of the telescope conjecture for certain algebraic stacks, due to Hall--Rydh (\cite{hall-rydh_2017}), to extend the above corollary even further. We leave the details of this to the interested reader. 

By work of Kanda we can almost extend this to the locally noetherian setting. In particular, for $X$ a locally noetherian scheme, Kanda proves in \cite[1.4]{kanda_2015} that there is a bijection between localizing subcategories of $\QCoh(X)$ and specialization closed subsets of $X$. However, as the telescope conjecture is --- to the best of our knowledge --- currently unresolved for locally noetherian schemes, we do not get a bijection to smashing localizing subcategories. The best we can obtain is then the following corollary. 

\begin{corollary}
    For $X$ a locally noetherian scheme, there are bijections
    \[\DXlocalizing\simeq \subsetsX \simeq \QCohlocalizing.\]
\end{corollary}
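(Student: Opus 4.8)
The plan is to deduce the statement by composing two already-established bijections, one purely abelian and one stable-to-abelian. The right-hand equivalence $\subsetsX \simeq \QCohlocalizing$ is precisely Kanda's theorem \cite[1.4]{kanda_2015}, which identifies the abelian localizing subcategories of $\QCoh(X)$ with the specialization closed subsets of $X$ for any locally noetherian scheme. No work is required for this half beyond invoking the citation, and it is exactly this input that replaces the telescope-conjecture-dependent arguments used in the noetherian case.

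For the left-hand equivalence $\DXlocalizing \simeq \QCohlocalizing$, the strategy is to apply \cref{thm:main} to the stable category $\C = \Der_{qc}(X)$, whose heart under the natural $t$-structure is $\C^\heart \simeq \QCoh(X)$. Since $\QCoh(X)$ is a Grothendieck abelian category for any scheme, the natural $t$-structure on $\Der_{qc}(X)$ is right complete and compatible with filtered colimits, so that $\Der_{qc}(X)$ qualifies as a $t$-stable category. \cref{thm:main} then furnishes a bijection $\L \longmapsto \L^\heart$ between the $\pi$-exact localizing subcategories of $\Der_{qc}(X)$ --- which is what $\DXlocalizing$ denotes --- and the abelian localizing subcategories $\QCohlocalizing$ of $\QCoh(X)$. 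Composing this with Kanda's bijection yields the two displayed equivalences simultaneously.

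The only genuinely non-formal point, and hence the main obstacle, is confirming that the natural $t$-structure on $\Der_{qc}(X)$ satisfies the hypotheses of a $t$-stable category in the locally noetherian --- and possibly non-quasi-compact --- setting, since \cref{thm:main} requires both right completeness and compatibility with filtered colimits. For quasi-compact quasi-separated schemes this is standard, and the equivalence $\Der(\QCoh(X)) \simeq \Der_{qc}(X)$ makes the heart computation transparent; for general locally noetherian schemes I would either cite the relevant results on derived categories of schemes or reduce to the affine case by a local-to-global argument, verifying that the standard $t$-structure still has heart $\QCoh(X)$ and remains right complete. Once this compatibility is in hand, the corollary follows formally by transitivity of the two bijections.
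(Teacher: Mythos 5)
Your proposal is correct and follows the same route the paper intends: the right-hand bijection is Kanda's theorem \cite[1.4]{kanda_2015}, and the left-hand one comes from applying \cref{thm:main} to $\Der_{qc}(X)$ with its natural $t$-structure and composing. Your extra care about verifying that $\Der_{qc}(X)$ is $t$-stable for a possibly non-quasi-compact locally noetherian scheme is a reasonable point the paper leaves implicit, but it does not change the argument.
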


\begin{remark}
    It would be very interesting to have a more direct proof for the fact that $\pi$-exact localizing subcategories of $\Der(R)$ and $\Der_{qc}(X)$ corresponds to smashing localizations. Having a direct proof would allow for a new proof of the telescope conjecture for commutative noetherian rings and noetherian schemes, and could shed some new light on the currently unsolved telescope conjecture for locally noetherian schemes. 
\end{remark}

\begin{remark}
    We also want to highlight other work of Kanda, where he shows that localizing subcategories of a locally noetherian Grothendieck abelian category $\A$ are classified by the \emph{atom spectrum} of $\A$, see \cite[5.5]{kanda_classifying_2012}. It would be interesting to see if these atomic methods could provide new insight also into the stable $\infty$-category $\C$. 
\end{remark}

To summarize this section, we construct the bottom part of the diagram from the introduction. By \cref{lm:pi-exact-then-separating} the bijection from \cref{thm:B} factors through the bijection of \cref{thm:Lurie-correspondence}. In particular, we get bijections 
\begin{center}
    \begin{tikzcd}
        \piexact \arrow[r, "(-)\geqz", yshift=2pt] & \separating \arrow[r, "(-)\leqz"] \arrow[l, "\Sp(-)", yshift=-2pt] & \ablocalizing
    \end{tikzcd}
\end{center}
such that the composite map from the left to the right is the map $(-)^\heart$ from \cref{thm:B}. This finally gives the wanted diagram. 

\begin{center}
    \begin{tikzcd}
        \piexact 
        \arrow[r, "\simeq"] 
        \arrow[d, hook, "\subseteq"]
        & 
        \separating 
        \arrow[r, "\simeq"]
        \arrow[d, hook, "\subseteq"]
        & 
        \ablocalizing
        \\
        \texact 
        \arrow[r, "(-)\geqz"]
        & 
        \prlocalizing 
        \arrow[ru, "(-)\leqz", swap] 
        &                                
    \end{tikzcd}
\end{center}

\subsection{Comparing stable categories with the same heart}

We round off the paper by proving some easy corollaries of \cref{thm:A} and \cref{thm:B} for stable categories with $t$-structures with the same heart. The first immediate corollary is the following. 

\begin{corollary}
    Let $\A$ be any Grothendieck abelian category. For any two $t$-stable categories $\C$ and $\D$ with $\C^\heart \simeq \A\simeq \D^\heart$ there are one-to-one correspondences
    \[\pistable\to\pistableD\]
    and 
    \[\piexact\to \piexactD.\]
\end{corollary}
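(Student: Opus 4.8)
The plan is to deduce both correspondences directly from the two main classification theorems, reducing everything to the observation that the relevant subcategories of an abelian category depend only on its equivalence class. First I would apply \cref{thm:premain} twice: applied to $\C$ it yields a bijection between the $\pi$-stable localizing subcategories of $\C$ and the weak localizing subcategories of $\C^\heart$, and applied to $\D$ it yields a bijection between the $\pi$-stable localizing subcategories of $\D$ and the weak localizing subcategories of $\D^\heart$. Both of these target sets are the weak localizing subcategories of a Grothendieck abelian category equivalent to $\A$, so it remains to compare them through the given heart equivalences.

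Next I would observe that the chosen equivalence $\C^\heart \simeq \A \simeq \D^\heart$ induces a bijection between the weak localizing subcategories of $\C^\heart$ and those of $\D^\heart$. This is because the defining conditions for a weak localizing subcategory --- being a weak Serre subcategory closed under coproducts --- are phrased entirely in terms of exact sequences and coproducts, and any equivalence of abelian categories preserves these, carrying weak localizing subcategories to weak localizing subcategories in both directions. Composing the bijection of \cref{thm:premain} for $\C$ with this equivalence-induced bijection on hearts, followed by the inverse of the bijection of \cref{thm:premain} for $\D$, produces the desired one-to-one correspondence between the $\pi$-stable localizing subcategories of $\C$ and those of $\D$.

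The $\pi$-exact case is entirely analogous. I would replace \cref{thm:premain} with \cref{thm:main}, which gives a bijection between $\pi$-exact localizing subcategories and abelian localizing subcategories of the heart, and replace weak localizing subcategories by abelian localizing subcategories throughout. The class of abelian localizing subcategories --- Serre subcategories closed under coproducts --- is likewise defined purely through short exact sequences and coproducts, hence is preserved under any equivalence of Grothendieck abelian categories, so the same composition of three bijections goes through verbatim.

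I do not expect a genuine obstacle here: the only step requiring verification is that the closure conditions defining weak localizing and abelian localizing subcategories are invariant under equivalences of abelian categories, which is immediate since each condition refers only to intrinsic abelian-categorical data. All the substantive content is already carried by \cref{thm:premain} and \cref{thm:main}, and this corollary simply transports a fixed invariant of the heart between the two ambient stable categories.
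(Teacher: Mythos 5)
Your proposal is correct and matches the paper's (implicit) argument: the paper presents this as an immediate corollary of \cref{thm:premain} and \cref{thm:main}, obtained precisely by composing the two heart bijections through the identification $\C^\heart \simeq \A \simeq \D^\heart$. Your added remark that weak localizing and abelian localizing subcategories are preserved under equivalences of abelian categories is the only verification needed, and it is handled correctly.
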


The above correspondence might not be induced by a functor between $\C$ and $\D$, but is just an abstract isomorphism. However, in the case when there is a functor, the $\pi$-stable localizing subcategories are also functorially related. We can set this up as follows. 

\begin{lemma}
    \label{lm:restricted-functor-on-pi-localizing}
    Let $\C$ and $\D$ be $t$-stable categories with $\A^\heart\subseteq \C^\heart$ and $\T^\heart\subseteq \D^\heart$ abelian weak localizing subcategories of the respective hearts. 
    If there is a $t$-exact functor $F\:\C\to \D$ such that the functor on hearts $F^\heart\: \C^\heart\to \D^\heart$ restricts to a functor 
    \[F^\heart_{\vert \A^\heart}\: \A^\heart \to \T^\heart,\]
    then the functor $F$ restricts to the unique $\pi$-stable localizing subcategories $F_{\vert \A}\:\A\to \T$. 
\end{lemma}
\begin{proof}
    As $F$ is $t$-exact we have $F(\pi_{\C,n}^\heart X)\simeq \pi_{\D, n}^\heart F(X)$. By assumption we know that $F(\pi_{\C,n}^{\heart}X) \simeq F^\heart(\pi_{\C,n}^\heart X) \in \T^\heart$, hence any $Y$ in the image of $F$ has $\pi_{\D,n}^\heart Y \in \T^\heart$ for any $n$. Since $\T$ is $\pi$-stable this implies that $Y\in \T$, proving the claim.
\end{proof}

Let $F\:\C\to\D$ be a $t$-exact functor of $t$-stable categories such that the induced functor $F^\heart \: \C^\heart\overset{\simeq}\to \D^\heart$ is an equivalence. Assume further that $\A$ is an abelian weak localizing subcategory of $\C^\heart$, and that $F^\heart$ restricts to a functor $F^\heart_{\vert \A}\:\A\to\A$. By \cref{lm:restricted-functor-on-pi-localizing} we get restricted functors $F_{\vert \A_\C}\: \A_\C \to \A_\D$, where $\A_\C$ and $\A_\D$ respectively denote the unique $\pi$-stable localizing subcategories of $\C$ and $\D$ obtained via \cref{thm:A}.

\begin{corollary}
    If $F$ is an equivalence, then every such restricted functor $F_{\vert \A_\C}$ is an equivalence. 
\end{corollary}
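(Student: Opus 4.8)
The plan is to build an explicit inverse to $F_{\vert \A_\C}$ by restricting the inverse equivalence of $F$, and then to check that the two restrictions cancel.

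First I would verify that the inverse $G := F^{-1}\: \D\to\C$ is again a $t$-exact functor of $t$-stable categories. A $t$-exact functor commutes with the truncation functors (this is the fact already used in the proof of \cref{lm:t-exact-then-t-stable-kernel}), so $F\tau\geqn \simeq \tau\geqn F$ and $F\tau\leqn\simeq \tau\leqn F$; since $F$ is an equivalence these relations transpose to $\tau\geqn G\simeq G\tau\geqn$ and $\tau\leqn G\simeq G\tau\leqn$. Hence $G(\D\geqz)\subseteq\C\geqz$ and $G(\D\leqz)\subseteq\C\leqz$, so $G$ is $t$-exact, and $G^\heart\simeq (F^\heart)^{-1}$ is an equivalence of hearts carrying the heart of $\A_\D$ isomorphically back onto the heart $\A$ of $\A_\C$.

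Next I would apply \cref{lm:restricted-functor-on-pi-localizing} twice. Applied to $F$ it produces the restriction $F_{\vert\A_\C}\:\A_\C\to\A_\D$ of the statement; applied to $G$, whose heart-level functor sends the heart of $\A_\D$ into $\A$, it produces a restriction $G_{\vert\A_\D}\:\A_\D\to\A_\C$. By construction both are the honest restrictions of $F$ and $G$ along the fully faithful inclusions $\A_\C\hookrightarrow\C$ and $\A_\D\hookrightarrow\D$.

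Finally I would observe that the natural equivalences $GF\simeq\mathrm{id}_\C$ and $FG\simeq\mathrm{id}_\D$ restrict along these fully faithful inclusions to natural equivalences $G_{\vert\A_\D}\circ F_{\vert\A_\C}\simeq\mathrm{id}_{\A_\C}$ and $F_{\vert\A_\C}\circ G_{\vert\A_\D}\simeq\mathrm{id}_{\A_\D}$, so that $F_{\vert\A_\C}$ is an equivalence with inverse $G_{\vert\A_\D}$. The main obstacle I anticipate is the first step: confirming that the inverse of a $t$-exact equivalence is itself $t$-exact, so that it is a legitimate input to \cref{lm:restricted-functor-on-pi-localizing}, and that $G^\heart$ really does carry the heart of $\A_\D$ onto $\A$ so that the lemma applies in the reverse direction. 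Everything after that is formal, since restricting a pair of mutually inverse equivalences along compatible fully faithful inclusions preserves the inverse relationship.
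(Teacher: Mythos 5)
Your strategy --- restrict the inverse equivalence $G=F^{-1}$ and check that the two restrictions cancel --- is the natural one; the paper offers no argument for this corollary beyond treating it as immediate, so there is no competing proof to compare against. Your verification that $G$ is again $t$-exact is correct: a $t$-exact functor commutes with the truncation functors (as used in the proof of \cref{lm:t-exact-then-t-stable-kernel}), conjugating these identifications by the equivalence gives $\tau\geqn G\simeq G\tau\geqn$ and $\tau\leqn G\simeq G\tau\leqn$, and hence $G(\D\geqz)\subseteq\C\geqz$ and $G(\D\leqz)\subseteq\C\leqz$. The final step, restricting the natural equivalences $GF\simeq\mathrm{id}$ and $FG\simeq\mathrm{id}$ along the fully faithful inclusions, is likewise formal.

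The one step you flag as an ``anticipated obstacle'' is, however, a genuine gap, and it cannot be closed from the stated hypotheses alone. To apply \cref{lm:restricted-functor-on-pi-localizing} to $G$ you need $G^\heart=(F^\heart)^{-1}$ to carry $\A_\D^\heart=\A$ into $\A_\C^\heart=\A$, i.e.\ you need $\A\subseteq F^\heart(\A)$; but the setup only assumes $F^\heart(\A)\subseteq\A$, and this inclusion can be strict even when $F^\heart$ is an equivalence. For instance, take $\C=\D$ to be the derived category of integer-graded modules over a ring, $F$ the grading-shift autoequivalence (which is $t$-exact and induces an equivalence on hearts), and $\A$ the weak localizing subcategory of modules concentrated in nonnegative degrees: then $F^\heart$ restricts to a functor $\A\to\A$ that is not essentially surjective, and $F_{\vert\A_\C}\colon\A_\C\to\A_\D$ is fully faithful but not essentially surjective either. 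So the corollary genuinely requires the extra hypothesis that $F^\heart_{\vert\A}\colon\A\to\A$ is itself an equivalence (equivalently $F^\heart(\A)=\A$ as subcategories of the identified hearts) --- presumably what the paper intends, but not what it literally assumes. Your proof should either add this hypothesis explicitly or prove it; as written it names the obstacle and then leaves it unresolved. With that assumption in place, the rest of your argument goes through.
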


One interesting feature of the $\infty$-categorical framework is the existence of realization functors in reasonable generalities. If $\C$ is a $t$-stable category, then a realization functor for $\C$ is a functor $R\:\Der(\C^{\heart})\to \C$, extending the inclusion of the heart. In particular, $R$ restricts to the identity on $\Der(\C^\heart)^\heart\simeq \C^\heart$. These realization functors are rarely equivalences, even rarely full or faithful, but we can still apply \cref{lm:restricted-functor-on-pi-localizing} to functorially relate the $\pi$-stable localizing subcategories. Note that as $R$ restricts to the identity in hearts, we dont even need to assume or prove that the functor $R$ is $t$-exact, as the proof of \cref{lm:restricted-functor-on-pi-localizing} goes through regardless.  

The following argument is due to Maxime Ramzi.

\begin{lemma}
    Let $\C$ be a $t$-stable category and $\Der(\C^\heart)$ the derived category of its heart. In this situation there is a realization functor $R\:\Der(\C^{\heart})\to \C$ extending the inclusion $\C^\heart\hookrightarrow \C$. 
\end{lemma}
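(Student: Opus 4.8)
The plan is to construct the realization functor $R\:\Der(\C^\heart)\to \C$ by exploiting the universal property of the derived category of a Grothendieck abelian category. The key input is that $\Der(\C^\heart)$, equipped with its standard $t$-structure, is the \emph{initial} $t$-stable category on its heart in a suitable sense: more precisely, $\Der(\C^\heart)\geqz$ is freely generated as a Grothendieck prestable category by the abelian category $\C^\heart$, via the left-exact localization of the prestable category of simplicial objects (or connective chain complexes) on $\C^\heart$. First I would invoke the description of $\Der(\A)\geqz$ for a Grothendieck abelian category $\A$ from \cite[App. C]{lurie_SAG}, which presents it as the universal Grothendieck prestable category receiving $\A$ as its heart with respect to right $t$-exact colimit-preserving functors.

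The next step is to produce the functor on connective parts. Since $\C^\heart$ sits inside $\C\geqz$ as the heart, and $\C\geqz$ is a Grothendieck prestable category, I would use the universal property of $\Der(\C^\heart)\geqz$ to obtain a colimit-preserving (hence right $t$-exact) functor $R\geqz\:\Der(\C^\heart)\geqz\to \C\geqz$ extending the identity on the common heart $\C^\heart$. Concretely, the inclusion $\C^\heart\hookrightarrow \C\geqz$ is a functor into a Grothendieck prestable category, and by the universal property this extends uniquely (up to contractible choice) to a colimit-preserving functor out of the free such category $\Der(\C^\heart)\geqz$. Then I would stabilize: applying $\Sp(-)$ to $R\geqz$ and using \cref{lm:right-complete-then-equiv-to-sp} together with the fact that $\C$ is $t$-stable (so $\C\simeq\Sp(\C\geqz)$) and that $\Der(\C^\heart)$ is $t$-stable, I obtain the desired functor $R\:\Der(\C^\heart)\simeq\Sp(\Der(\C^\heart)\geqz)\to\Sp(\C\geqz)\simeq\C$. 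By construction $R$ restricts to the inclusion $\C^\heart\hookrightarrow\C$ on hearts, since the functor on connective parts did so and stabilization preserves the heart.

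The main obstacle I anticipate is pinning down the exact form of the universal property of $\Der(\A)\geqz$ needed here, and verifying that the inclusion $\C^\heart\hookrightarrow\C\geqz$ satisfies whatever hypotheses (such as right exactness, or being defined on projective generators) that property demands. The subtlety is that the universal property of $\Der(\A)$ in the $\infty$-categorical setting typically characterizes colimit-preserving functors out of it in terms of data on $\A$, but one must be careful about whether the extension is required to be $t$-exact or merely right $t$-exact, and whether it is genuinely unique. I would handle this by working at the level of the Grothendieck prestable categories, where Lurie's framework gives the cleanest statements, and only afterward transporting the result to the stable setting via stabilization, which is where \cref{lm:right-complete-then-equiv-to-sp} and the equivalence $\C\simeq\Sp(\C\geqz)$ do the work. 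As the surrounding text already notes, $t$-exactness of $R$ is \emph{not} needed for the downstream application in \cref{lm:restricted-functor-on-pi-localizing}, so I would only claim that $R$ extends the heart inclusion, leaving any finer exactness statements aside.
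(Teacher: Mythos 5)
Your overall architecture --- produce a colimit-preserving functor at the level of connective objects and then pass to stabilizations --- matches the paper's, and your stabilization step is unproblematic: applying $\Sp(-)$ to a colimit-preserving functor $\Der(\C^\heart)\geqz\to\C\geqz$ and using $\C\simeq\Sp(\C\geqz)$ from \cref{lm:right-complete-then-equiv-to-sp} is interchangeable with the paper's use of the universal property of stabilization \cite[1.4.4.5]{Lurie_HA} applied to a functor landing directly in $\C$. The divergence, and the problem, is in how the connective-level functor is obtained. The claim that $\Der(\C^\heart)\geqz$ is \emph{freely generated} as a Grothendieck prestable category by $\C^\heart$ is too strong: free generation under colimits is the universal property of the nonabelian derived category built from compact projective generators, and a general Grothendieck abelian category --- including precisely the hearts this paper cares about, such as $\QCoh(X)$ for a scheme $X$ or comodule categories arising from synthetic spectra --- need not have enough compact projectives. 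So the universal property you want to invoke does not exist in the form you state it, and the ``obstacle'' you defer to the end is in fact the entire mathematical content of the lemma.

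The universal property of $\Der(\A)\geqz$ that \emph{does} hold for an arbitrary Grothendieck abelian category is the one through simplicial objects: colimit-preserving functors out of $\Der(\A)\geqz$ correspond to functors out of $\Fun(\Delta\op,\A)$ that preserve geometric realizations and invert weak equivalences. The paper's proof is exactly the verification that the geometric realization functor $\Fun(\Delta\op,\C^\heart)\to\C$ qualifies --- the weak-equivalence statement is \cite[1.2.4.4, 1.2.4.5]{Lurie_HA} --- followed by the Dold--Kan correspondence to identify the resulting localization with $\Der(\C^\heart)\geqz$, and then the extension to $\Der(\C^\heart)$ by stability of $\C$. Your proposal is therefore correct in outline but incomplete at its key step; once you replace the free-generation claim with the realization/Dold--Kan universal property (the only one available in this generality), your argument collapses onto the paper's.
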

\begin{proof}
     The inclusion of the heart extends to a functor $\Fun(\Delta\op, \C^\heart)\to \C$ via geometric realization, which preserves weak equivalences by \cite[1.2.4.4, 1.2.4.5]{Lurie_HA}. Via the Dold--Kan correspondence this gives a essentially unique colimit preserving functor $\Der(\C^\heart)\geqz\to \C$, which extends uniquely to a functor $\Der(\C^\heart)\to \C$ by \cite[1.4.4.5]{Lurie_HA}, as $\C$ is stable. This functor preserves both colimits and the heart $\C^\heart$. 
\end{proof}

We can then functorially relate the $\pi$-stable localizing subcategories of $\Der(\C^\heart)$ and $\C$ via the realization functor. 

\begin{corollary}
    Let $\C$ be a $t$-stable category and $R\: \Der(\C^\heart)\to \C$ the realization functor. For any weak localizing subcategory $\A\subseteq \C^\heart$, the functor $R$ restricts to a functor
    \[R\: \A_{\Der(\C^\heart)}\to \A_\C,\]
    where the former category denotes the unique $\pi$-stable lift of $\A$ to $\Der(\C^\heart)$, and the latter the unique $\pi$-stable lift of $\A$ to $\C$. 
\end{corollary}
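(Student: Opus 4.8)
The plan is to reduce this corollary directly to \cref{lm:restricted-functor-on-pi-localizing}, using the preceding realization lemma as input. First I would fix notation: let $\A \subseteq \C^\heart$ be a weak localizing subcategory, and recall that by \cref{thm:A} there is a unique $\pi$-stable localizing subcategory of $\Der(\C^\heart)$ with heart $\A$, which we denote $\A_{\Der(\C^\heart)}$, and likewise a unique $\pi$-stable localizing subcategory $\A_\C \subseteq \C$ with heart $\A$. The realization functor $R\:\Der(\C^\heart)\to\C$ from the preceding lemma preserves the heart and restricts to the identity on $\Der(\C^\heart)^\heart \simeq \C^\heart \simeq \C^\heart$, so in particular $R^\heart$ carries $\A$ (viewed inside $\Der(\C^\heart)^\heart$) isomorphically onto $\A$ (viewed inside $\C^\heart$).

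The key observation, already flagged in the paragraph introducing this corollary, is that although \cref{lm:restricted-functor-on-pi-localizing} is stated for a $t$-exact functor $F$, its proof only uses the relation $F(\pi_{\C,n}^\heart X)\simeq \pi_{\D,n}^\heart F(X)$ together with the fact that $F^\heart$ lands in the target weak localizing subcategory. For the realization functor this identity holds because $R$ restricts to the identity in hearts: the composite $\pi_n^\heart \circ R$ agrees with $R^\heart \circ \pi_n^\heart$ on $\Der(\C^\heart)\geqz$, and the latter is just $\pi_n^\heart$ up to the identification $\Der(\C^\heart)^\heart\simeq \C^\heart$. So even without $t$-exactness of $R$ we get that for $X\in \A_{\Der(\C^\heart)}$, each $\pi_{\C,n}^\heart R(X)$ lies in $\A$.

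Thus the main step is to verify the hypotheses of \cref{lm:restricted-functor-on-pi-localizing} with $F$ replaced by $R$: namely that $R^\heart$ restricts to a functor $\A\to\A$. Since $R^\heart$ is (the identification with) the identity, this is immediate. Feeding this into the argument of \cref{lm:restricted-functor-on-pi-localizing} shows that any $Y$ in the image of $R\vert_{\A_{\Der(\C^\heart)}}$ has all $\pi_{\C,n}^\heart Y \in \A = \A_\C^\heart$, and since $\A_\C$ is $\pi$-stable this forces $Y\in \A_\C$. Hence $R$ restricts to the claimed functor $R\:\A_{\Der(\C^\heart)}\to \A_\C$.

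The only genuine subtlety, which I would be careful to spell out, is the identity $\pi_n^\heart \circ R \simeq R^\heart \circ \pi_n^\heart$ in the absence of $t$-exactness. The safest route is to note that $R$ preserves colimits and the heart, and then argue degreewise: for a connective object $X$ built (via Dold--Kan) from a simplicial object in $\C^\heart$, the homotopy groups of its realization are computed inside $\C^\heart$ by the same normalized-complex recipe as the homotopy groups of $X$ in $\Der(\C^\heart)$, because $R$ acts as the inclusion on the heart. This identification of heart-valued homotopy is the place where one must not simply invoke $t$-exactness, and it is the main (if modest) obstacle in making the argument rigorous rather than heuristic.
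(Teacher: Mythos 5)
Your proof takes essentially the same route as the paper's: both reduce the corollary to \cref{lm:restricted-functor-on-pi-localizing} together with the $\pi$-stability of $\A_\C$ and the observation that $R$ restricts to the identity on hearts, so the hypothesis on $F^\heart$ is automatic. In fact you are slightly more careful than the paper, which simply asserts that the lemma's proof ``goes through'' without $t$-exactness, whereas you correctly isolate the identity $\pi_n^\heart\circ R\simeq R^\heart\circ\pi_n^\heart$ as the point needing justification and supply the degreewise Dold--Kan argument for it (the extension from connective objects to all of $\A_{\Der(\C^\heart)}$ via right completeness and colimit-preservation is routine and can be left implicit).
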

\begin{proof}
    This follows immediately from \cref{lm:restricted-functor-on-pi-localizing}, the $\pi$-stability of $\A_\C$ and the fact that the identity restricts to the identity functor $\A\simeq\A_{\Der(\C^\heart)}^\heart \to \A_\C^\heart\simeq \A$. 
\end{proof}

\begin{remark}
    By \cref{prop:pi-stable-then-t-stable} the $\pi$-stable localizing subcategory $\A_\C$ is also $t$-stable, with heart $\A$. Hence, there is also a realization functor $R'\: \Der(\A)\to \A_\C$, and a natural question to ask is wether this coincides with the above restricted functor $R\: \A_{\Der(\C^\heart)}\to \A_\C$. There is an inclusion $\Der(\A)\subseteq \A_{\Der(\C^\heart)}$, as the latter is a $\pi$-stable localizing subcategory of $\Der(\C^\heart)$, but we do not know if this is always an equivalence. In particular, we don't know whether $\D(\A)$, treated as a subcategory of $\Der(\C^\heart)$, is always a $\pi$-stable localizing subcategory. 
\end{remark}




\printbibliography{}

\textbf{Torgeir Aamb\o:} Department of Mathematical Sciences, Norwegian University of Science and Technology, Trondheim \\
\textbf{Email address:} torgeir.aambo@ntnu.no \\
\textbf{Website:} \href{https://folk.ntnu.no/torgeaam/ }{https://folk.ntnu.no/torgeaam/}  
\end{document}